\newtheorem{thm}{Theorem}[section]
\newtheorem{cor}[thm]{Corollary}
\newtheorem{lem}[thm]{Lemma}
\newtheorem{prop}[thm]{Proposition}
\theoremstyle{definition}
\newtheorem{defn}[thm]{Definition}
\newtheorem{rem}[thm]{Remark}
\DeclareMathOperator{\N}{\mathbb {N}}
\DeclareMathOperator{\Z}{\mathbb {Z}}
\DeclareMathOperator{\depth}{depth}
\DeclareMathOperator{\ass}{Ass}
\DeclareMathOperator{\diam}{diam}
\DeclareMathOperator{\dstab}{dstab}
\DeclareMathOperator{\Min}{Min}
\def\alb {\mathbf {\alpha}}
\def\btb {\mathbf {\beta}}
\def\ttb {\mathbf {\theta}}
\def\gmb {\mathbf {\gamma}}
\def\zv {\mathbf 0}
\def\x {\mathbf x}
\def\y {\mathbf y}
\def\mi {\mathfrak m}
\def\al {\alpha}
\def\bt {\beta}
\def\h {\widetilde{H}}
\begin{document}

\title[Stability of Depths of Powers of Edge Ideals] {Stability of Depths of Powers of Edge Ideals}

\author{Tran Nam Trung}
\address{Institute of Mathematics, VAST, 18 Hoang Quoc Viet, Hanoi, Viet Nam}
\email{tntrung@math.ac.vn}
\subjclass{13D45, 05C90, 05E40, 05E45.}
\keywords{Depth, monomial ideal, Stanley-Reisner ideal, edge ideal, simplicial complex, graph.}
\date{}

\dedicatory{}
\commby{}
\begin{abstract} Let $G$ be a graph and let $I := I (G)$ be its edge ideal. In this paper, we provide an upper bound of $n$ from  which $\depth R/ I(G)^n$ is stationary, and compute this limit explicitly. This bound is always achieved if $G$ has no cycles of length $4$ and every its connected component is either a tree or a unicyclic graph.
\end{abstract}

\maketitle
\section*{Introduction}

Let $R = K[x_1,\ldots, x_r]$ be a polynomial ring over a field $K$ and $I$ a homogeneous ideal in $R$. Brodmann \cite{B} showed that $\depth R/I^n$ is a constant for sufficiently large $n$. Moreover
$$\lim_{n\rightarrow \infty} \depth R/I^n \leqslant \dim R -\ell(I),$$
where $\ell(I)$ is the analytic spread of $I$. It was shown in \cite[Proposition $3.3$]{EH} that this is an equality when the associated graded ring of $I$ is Cohen-Macaulay.  We call the smallest number $n_0$ such that $\depth R/I^n = \depth R/I^{n_0}$ for all $n \geqslant n_0$, the {\it index of depth stability} of $I$, and denote this number by $\dstab(I)$. It is of natural interest to find a bound for $\dstab(I)$. As until now we only know effective bounds of $\dstab(I)$ for few special classes of ideals $I$, such as complete intersection ideals (see \cite{CN}), square-free Veronese ideals (see \cite{HH1}), polymatroidal ideals (see \cite{HQ}). In this paper we will study this problem for {\it edge ideals}.

From now on, every graph $G$ is assumed to be simple (i.e., a finite, undirected, loopless and without multiple edges) without isolated vertices on the vertex set $V(G)=[r]:=\{1,\ldots,r\}$ and the edge set $E(G)$ unless otherwise indicated.  We associate to $G$ the quadratic squarefree monomial ideal
$$I(G) = (x_ix_j \ |\ \{i,j\} \in E(G)) \subseteq R = K[x_1,\ldots, x_r]$$
which is called the edge ideal of $G$.

If $I$ is a polymatroidal ideal in $R$, Herzog and Qureshi proved that $\dstab(I) < \dim R$ and they asked whether $\dstab(I) < \dim R$ for all Stanley-Reisner ideals $I$ in $R$  (see \cite{HQ}). For a graph $G$, if every its connected component is nonbipartite, then we can see that $\dstab(I(G)) < \dim R$ from \cite {CMS}. In general,  there is not an absolute bound of $\dstab(I(G))$ even in the case $G$ is a tree (see \cite{MO}). In this paper we will establish a bound of $\dstab(I(G))$ for any graph $G$. In particular, $\dstab(I(G))  < \dim R$.

The first main result of the paper shows that the limit of the sequence $\depth R/I(G)^n$ is the number $s$ of connected bipartite components of $G$ and $\depth R/I(G)^n$ immediately becomes constant once it reaches the value $s$. Moreover, $\dstab(I(G))$ can be obtained via its connected components.

\medskip

\noindent {\bf Theorem $\ref{MP}$}. {\it Let $G$ be a graph with $p$ connected components $G_1,\ldots, G_p$. Let $s$ be the number of connected bipartite components of $G$. Then
\begin{enumerate}
\item $\min\{\depth R/I(G)^n\mid n\geqslant 1\} = s$.
\smallskip
\item $\dstab(I(G)) = \min\{n\geqslant 1 \mid \depth R/I(G)^n = s\}$.
\smallskip
\item $\dstab(I(G)) = \sum_{i=1}^p \dstab(I(G_i))-p+1$.
\end{enumerate}
}

\medskip

The second one estimates an upper bound for $\dstab(I(G))$. Before stating our result, we recall some terminologies from graph theory. In a graph $G$, a {\it leaf} is a vertex of degree one and a {\it leaf edge} is an edge incident with a leaf. A connected graph is called a {\it tree} if it contains no cycles, and it is called a {\it unicyclic} graph if it contains exactly one cycle. We use the symbols $\upsilon(G)$, $\varepsilon(G)$ and $\varepsilon_0(G)$ to denote the number of vertices, edges and leaf edges of $G$,  respectively.

\medskip

\noindent {\bf Theorem $\ref{MT}$}. {\it Let $G$ be a graph. Let $G_1,\ldots,G_s$ be all connected bipartite components of $G$ and let $G_{s+1},\ldots,G_{s+t}$ be all connected nonbipartite components of $G$. Let $2k_i$ be the maximum length of cycles of $G_i$ ($k_i :=1$ if $G_i$ is a tree) for all $i=1,\ldots,s$; and let $2k_i-1$ be the maximum length of odd cycles of $G_i$ for every $i = s+1,\ldots, s+t$. Then
$$\dstab(I(G)) \leqslant \upsilon(G) - \varepsilon_0(G) -\sum_{i=1}^{s+t} k_i +1.$$
}
\medskip

It is interesting that this bound is always achieved if $G$ has no cycles of length $4$ and every its connected component is either a tree or a unicyclic graph (see Theorem \ref{DMP}).

\medskip

Our approach is based on a generalized Hochster formula for computing local cohomology modules of arbitrary monomial ideals formulated by Takayama \cite{T}. The efficiency of this formula was shown in recent papers (see \cite{GH}, \cite{HT}, \cite{MT1}, \cite{MT2}, \cite{TT}). Using this formula and an explicit description of it for symbolic powers of Stanley-Reisner ideals given in \cite{MT1}, we are able to study the stability of depths of powers of edge ideals.

The paper is organized as follows. In Section $1$, we give some useful formulas on $\dstab(I(G))$ for the case when all components of $G$ are either nonbipartite or bipartite. We also recall the generalized Hochster formula to compute local cohomological modules of monomial ideals formulated by Takayama. In Section $2$  and Section $3$ we set up an upper bound of the index of depth stability for connected graphs which are either nonbipartite or bipartite, respectively. The core of the paper is Section $4$. There we compute the limit of the sequence $\depth R/I(G)^n$. Then combining with results in Sections $2$ and $3$ on the index of depth stability of connected graphs we obtain a bound of $\dstab(I(G))$ for all any graph $G$. In the last section, we compute  the index of depth stability of trees and unicyclic graphs.

\section{Preliminary}

We recall some standard notation and terminology from graph theory here. Let $G$ be a graph.  The ends of an edge of $G$ are said to be incident with the edge, and vice versa. Two vertices which are incident with a common edge are adjacent, and two distinct adjacent vertices are neighbors. The set of neighbors of a vertex $v$ in $G$ is denoted by $N_G(v)$ and the degree of a vertex $v$ in $G$, denoted by $\deg_G(v)$, is  the number of neighbours of $v$ in $G$. If there is no ambiguity in the context, we write $\deg v$ instead of $\deg_G(v)$. The graph $G$ is bipartite if its vertex set can be partitioned into two subsets $X$ and $Y$ so that every edge has one end in $X$ and one end in $Y$; such a partition $(X, Y )$ is called a bipartition of $G$. It is well-known that $G$ is bipartite if and only if $G$ contains no odd cycle (see \cite[Theorem $4.7$]{BM}).

Let $I$ be a homogeneous ideal in a polynomial ring $R=K[x_1,\ldots,x_r]$ over the field $K$. As introduced in \cite{HRV} we define {\it the index of depth stability} of $I$ to be the number
$$\dstab(I) := \min\{n_0\geqslant 1 \mid \depth S/I^n = \depth S/I^{n_0} \text{ for all } n\geqslant n_0\}.$$

In this paper we will establish a bound of $\dstab(I(G))$ for any graph $G$. First we have some information about $\dstab(I(G)))$ when every component of $G$ is nonbipartite.

\begin{lem} \label{LT21} Let $G$ be a graph with connected components $G_1,\ldots, G_t$. If all these components are nonbipartite, then
\begin{enumerate}
\item $\dstab(I(G)) = \min\{n\geqslant 1 \mid \depth R/I(G)^n = 0\}$;
\smallskip
\item $\dstab(I(G)) = \sum_{i=1}^t \dstab(I(G_i))-t+1$.
\end{enumerate}
\end{lem}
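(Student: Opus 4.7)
The plan is to establish (1) first and then use it to deduce (2).

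For (1), I combine two ingredients. First, for any edge ideal $I = I(G)$ the associated primes persist, i.e.\ $\ass(R/I^n) \subseteq \ass(R/I^{n+1})$ for every $n \geq 1$ (a known result, e.g.\ Mart\'inez-Bernal--Morey--Villarreal). Since $\depth R/J = 0$ if and only if $\mi \in \ass(R/J)$, persistence forces the depth, once it reaches $0$, to stay at $0$ thereafter. Second, the hypothesis that every component of $G$ is nonbipartite forces $\ell(I(G)) = \dim R$ (Simis--Vasconcelos--Villarreal), so Brodmann's inequality $\lim_n \depth R/I^n \leq \dim R - \ell(I)$ gives $\lim_n \depth R/I(G)^n = 0$. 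Together these yield $\dstab(I(G)) = \min\{n \geq 1 : \depth R/I(G)^n = 0\}$, which is (1).

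For (2), set $R_i = K[x_j : j \in V(G_i)]$; since the vertex sets $V(G_i)$ are pairwise disjoint, $I(G) = I(G_1) + \cdots + I(G_t)$ is a sum of ideals in algebraically independent variables. The technical core is a splitting formula
\[
\depth R/I(G)^n \;=\; \min\!\left\{\,\sum_{i=1}^t \depth R_i/I(G_i)^{n_i} \;:\; n_1 + \cdots + n_t = n + t - 1,\; n_i \geq 1\,\right\}.
\]
I would prove this by induction on $t$, reducing to the two-component case $\depth S/(I + J)^n = \min\{\depth A/I^i + \depth B/J^j : i + j = n + 1,\; i,j \geq 1\}$; this in turn follows from the Takayama formula featured throughout the paper, by decomposing $(I + J)^n = \sum_{a + b = n} I^a J^b$ and observing that the associated degree complexes factor as joins across the two disjoint vertex blocks. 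Applying the K\"unneth formula for joins of simplicial complexes then rewrites the Hilbert series of each $H^i_\mi(R/(I+J)^n)$ as a convolution indexed by compositions of $n$, from which the depth (the smallest nonvanishing $i$) is read off. The case $n = 1$ recovers the tensor-product identity $\depth R/I(G) = \sum_i \depth R_i/I(G_i)$.

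Granting the splitting formula and applying (1) componentwise, $\depth R/I(G)^n = 0$ if and only if there exist positive integers $n_i$ summing to $n + t - 1$ with $n_i \geq \dstab(I(G_i))$ for every $i$; the smallest such $n$ is exactly $\sum_i \dstab(I(G_i)) - t + 1$, which by (1) equals $\dstab(I(G))$. I expect the main obstacle to be establishing the splitting formula cleanly: the K\"unneth/join bookkeeping across several blocks is delicate, and care is needed in controlling the minimum over all compositions. Once the formula is in hand, (2) reduces to a short counting argument.
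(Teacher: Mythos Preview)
Your argument for (1) is correct and parallels the paper's: both hinge on persistence of associated primes for edge ideals \cite{MMV} together with the fact that $\mi$ eventually lies in $\ass(R/I(G)^n)$. You reach the latter via $\ell(I(G))=\dim R$ and Brodmann's bound, whereas the paper cites \cite[Corollaries~2.2 and~3.4]{CMS} directly; the two routes are essentially interchangeable here.

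For (2) your route is genuinely different from the paper's and considerably heavier. The paper does \emph{not} establish a general depth splitting formula; instead it invokes \cite[Corollary~2.2]{CMS}, which says precisely that $\mi\in\ass(R/I(G)^n)$ if and only if $n=\sum_{i=1}^t(n_i-1)+1$ for some positive integers $n_i$ with $\mi_i\in\ass(R_i/I(G_i)^{n_i})$. Combined with (1) applied to each $G_i$, this gives (2) in two lines. Your splitting formula
\[
\depth R/(I+J)^n \;=\; \min_{\substack{i+j=n+1\\ i,j\geq 1}}\bigl\{\depth A/I^i+\depth B/J^j\bigr\}
\]
is true (it was later proved by H.~T.~H\`a, N.~V.~Trung and T.~N.~Trung, \emph{Math.\ Z.}\ 2016), and once granted it does yield (2) exactly as you say. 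But your sketch of its proof has a gap: for $\gmb=(\alb,\btb)$ the degree complex $\Delta_{\gmb}((I+J)^n)$ is not a single join $\Delta_{\alb}(I^i)*\Delta_{\btb}(J^j)$ but rather the \emph{union} $\bigcup_{i+j=n+1}\Delta_{\alb}(I^i)*\Delta_{\btb}(J^j)$ over all compositions, so a direct K\"unneth argument does not apply. The known proofs proceed instead via the filtration $W_i=\sum_{k\geq i}I^kJ^{n-k}$ and repeated short exact sequences (as in Lemma~\ref{L10} of this paper, which only extracts the one-sided inequality), together with careful depth bookkeeping. So your plan would work, but the ``technical core'' you flag is a theorem in its own right, and for this particular lemma the paper's appeal to \cite{CMS} is far more economical.
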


\begin{proof} $(1)$ Let $\mi_i := (x_j\mid j\in \ V(G_i))$ and $R_i := K[x_j \mid j \in V(G_i)]$, i.e.,   $\mi_i$ is the maximal homogeneous ideal of $R_i$, for $i=1,\ldots,t$. Let $\mi := (x_j \mid j\in V(G))$ be the maximal homogeneous ideal of $R$, so that $\mi =\mi_1 + \cdots + \mi_t$.

By \cite[Corollary $3.4$]{CMS} we have $\mi_i\in\ass(R_i/I(G_i)^{n_i})$ for some integer $n_i\geqslant 1$. Let $n_0 := \sum_{i=1}^t (n_i-1)+1$. By \cite[Corollary $2.2$]{CMS}  we have $\mi \in \ass(R/I(G)^n)$ for all $n\geqslant n_0$. On the other hand, the sequence $\{\ass(R/I(G)^n)\}_{n\geqslant 1}$ is increasing by \cite[Theorem $2.15$]{MMV} and note that $\depth R/I(G)^n = 0$ if and only if $\mi\in\ass(R/I(G)^n)$, this implies $\dstab(I(G)) = \min\{n\geqslant 1 \mid \depth R/I(G)^n = 0\}$.

$(2)$ By Part $1$ we also have $\dstab(I(G_i)) = \min\{n\geqslant 1 \mid \mi_i \in \ass(R/I(G_i)^n)\}$ for each component $G_i$. On the other hand, by \cite[Corollary $2.2$]{CMS} we have $\mi \in \ass(R/I(G)^n)$ if and only if we can write $n = \sum_{i=1}^t (n_i-1)+1$ where the $n_i$ are positive integers such that $\mi_i\in \ass(R_i/I(G_i)^{n_i})$. Thus the the statement follows.
\end{proof}

Next, we consider bipartite graphs. Note that all connected components of such graphs are bipartite as well. Bipartite graphs  have a nice algebraic characterization.

\begin{lem}\label{L03} {\rm (\cite{SVV})} A graph $G$ is bipartite if and only if  $I(G)^n = I(G)^{(n)}$ for all $n \geqslant 1$.
\end{lem}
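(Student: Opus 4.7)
The plan is to handle the two directions separately, both built on the standard description of symbolic powers of a squarefree monomial ideal,
$$I(G)^{(n)} = \bigcap_{C} P_C^{\,n},$$
where $P_C = (x_i \mid i \in C)$ and $C$ ranges over the minimal vertex covers of $G$. Equivalently, a monomial $f = x_1^{a_1}\cdots x_r^{a_r}$ lies in $I(G)^{(n)}$ if and only if $\sum_{i\in C} a_i\geqslant n$ for every such $C$. Since $I(G)^n\subseteq I(G)^{(n)}$ always holds, only the reverse inclusion is at issue.

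For the implication ``$I(G)^n = I(G)^{(n)}$ for all $n$ $\Rightarrow$ $G$ bipartite'', I would argue by contrapositive. Assuming $G$ contains an odd cycle, pick one of shortest length, say $2k+1$, on vertices $v_1,\ldots,v_{2k+1}$, and consider $f = x_{v_1}\cdots x_{v_{2k+1}}$. Since the independence number of the cycle $C_{2k+1}$ equals $k$, any vertex cover of $G$ must contain at least $k+1$ of the $v_i$, so $f \in I(G)^{(k+1)}$. But $\deg f = 2k+1 < 2(k+1)$, so $f$ cannot be a product of $k+1$ edge generators, hence $f\notin I(G)^{k+1}$.

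For the other direction, assume $G$ is bipartite and take a monomial $f = \prod_i x_i^{a_i}\in I(G)^{(n)}$. I would reformulate the target $f\in I(G)^n$ as the packing statement: there exist nonnegative integers $b_e$, one per edge $e\in E(G)$, with $\sum_e b_e = n$ and $\sum_{e\ni i} b_e \leqslant a_i$ for every vertex $i$ (from which $f$ is divisible by $\prod_{e=\{i,j\}}(x_ix_j)^{b_e}\in I(G)^n$). This is an integer program whose coefficient matrix is the vertex-edge incidence matrix of $G$. For bipartite $G$ this matrix is totally unimodular, so the LP relaxation attains integer optima. By LP duality, the relaxation's maximum equals the minimum of $\sum_i a_i y_i$ over fractional vertex covers $y$; and for bipartite $G$ this minimum is attained at a $0$-$1$ vertex cover by K\"onig's theorem. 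The hypothesis $f\in I(G)^{(n)}$ forces this minimum to be $\geqslant n$, so the LP maximum is $\geqslant n$ and integrality then supplies the required integers $b_e$.

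The main obstacle is the bipartite direction: setting up the correct packing-covering duality and invoking total unimodularity of the bipartite incidence matrix (equivalently, K\"onig's theorem or the max-flow-min-cut property for bipartite clutters) to pass from a fractional to an integral solution. The non-bipartite direction, by contrast, reduces to producing the single combinatorial witness supported on a shortest odd cycle.
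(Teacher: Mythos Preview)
The paper does not prove this lemma at all: it is stated with a bare citation to \cite{SVV} (Simis--Vasconcelos--Villarreal) and no argument is given. So there is no ``paper's proof'' to compare against in any substantive sense.

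Your argument is correct and is essentially the standard one. The non-bipartite direction is exactly right: the monomial supported on a shortest odd cycle lies in the symbolic power for degree reasons on every minimal cover, but is too short to lie in the ordinary power. For the bipartite direction, your LP/duality framing is the right one; the only small comment is that you invoke both total unimodularity and K\"onig's theorem, and these are doing the same work. Total unimodularity of the bipartite incidence matrix already guarantees that both the primal (fractional $b$-matching) and the dual (fractional weighted cover) attain integral optima, so once you know the dual minimum over $0$--$1$ covers is $\geqslant n$ from the symbolic-power hypothesis, integrality of the primal optimum gives the required $b_e$'s directly. K\"onig's theorem is a consequence of (a special case of) this integrality, not an additional ingredient. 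Otherwise the sketch is complete; one could add the remark that having found integers $b_e$ with $\sum_e b_e \geqslant n$, you simply decrease some $b_e$'s to reach $\sum_e b_e = n$ while preserving the vertex constraints, which is harmless since the constraints are upper bounds.
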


Using this characterization we obtain.

\begin{lem}\label{C1} Let $G$ be a bipartite graph with $s$ connected components. Then
\begin{enumerate}
\item $\min\{\depth R/I(G)^n \mid n\geqslant 1\} = s$, and
\smallskip
\item $\dstab(I(G)) = \min\{n \geqslant 1 \mid \depth R/I(G)^n = s\}$.
\end{enumerate}
\end{lem}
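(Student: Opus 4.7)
The engine is Lemma \ref{L03}, which gives $I(G)^n = I(G)^{(n)}$ for every $n$; in particular $\ass(R/I(G)^n) = \Min(I(G))$ is constant in $n$, and $\mi$ is never associated since the full vertex set is not a minimal vertex cover once $G$ has an edge. Decompose $G = G_1 \sqcup \cdots \sqcup G_s$, and correspondingly $R = R_1 \otimes_K \cdots \otimes_K R_s$, $I(G) = I_1 + \cdots + I_s$, $\mi = \mi_1 + \cdots + \mi_s$, where $R_j$, $I_j := I(G_j)$, $\mi_j$ refer to the $j$-th connected component.

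The central ingredient is the formula
\begin{equation*}
\depth R/I(G)^n \;=\; \min\Bigl\{\sum_{j=1}^s \depth R_j/I_j^{n_j} \;\Bigm|\; n_j \geqslant 1,\; \sum_{j=1}^s n_j = n + s - 1\Bigr\},
\end{equation*}
which I would prove by combining Lemma \ref{L03} with the Minh-Trung description \cite{MT1} of Takayama's formula for symbolic powers of a squarefree monomial ideal. Because the $I_j$ are supported on pairwise disjoint variable sets, the simplicial complex attached to $I(G)^{(n)}$ at a given monomial multi-degree decomposes as the join of the corresponding complexes for the $I_j^{(n_j)}$ with $\sum n_j = n + s - 1$, and the K\"unneth formula for reduced simplicial homology of a join then translates into a direct-sum decomposition of $H^i_\mi(R/I(G)^n)$ indexed by such partitions. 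Reading off the smallest nonvanishing cohomological degree yields the identity.

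Given the formula, part (1) follows quickly: each summand on the right is $\geqslant 1$ because $\mi_j \notin \ass(R_j/I_j^{n_j}) = \Min(I_j)$ for a component $G_j$ with an edge, so $\depth R/I(G)^n \geqslant s$; conversely, \cite[Proposition $3.3$]{EH} applied to a single component---available because $I_j^n = I_j^{(n)}$ makes $\mathrm{gr}_{I_j}(R_j)$ Cohen-Macaulay, with analytic spread $\ell(I_j) = \upsilon(G_j) - 1$ (the rank of the incidence matrix of a connected bipartite graph)---gives $\lim_k \depth R_j/I_j^k = 1$, so picking each $n_j$ in its stable range realises $\depth R/I(G)^n = s$. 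Part (2) is read off from the same formula: once a partition $(n_j^{(0)})$ realises the minimum at $n_0$, every $n \geqslant n_0$ admits a componentwise-larger partition still summing to $n + s - 1$, which continues to yield total depth $s$ by the just-established single-component stability at $1$.

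The principal obstacle is the join/K\"unneth step: one must verify that under a disjoint-variable decomposition of $I$, the Minh-Trung simplicial complex factors as a join with the bookkeeping $\sum n_j = n + s - 1$. The other ingredients (Brodmann, Eisenbud-Huneke, the analytic-spread computation via the incidence matrix, and the Cohen-Macaulay-ness of $\mathrm{gr}_{I(G_j)}(R_j)$ for bipartite $G_j$) are either standard or well-known in the edge-ideal literature.
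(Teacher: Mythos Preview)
Your route is considerably more elaborate than the paper's, and the central step does not work as stated. The paper never decomposes $G$ into components here: it applies the chain $I(G)^n=I(G)^{(n)}\Rightarrow$ $I(G)$ normally torsion-free $\Rightarrow$ (Hochster) the Rees ring is Cohen--Macaulay $\Rightarrow$ (Huneke) the associated graded ring is Cohen--Macaulay $\Rightarrow$ (Eisenbud--Huneke, Prop.~3.3) $\depth R/I(G)^n$ is nonincreasing with limit $r-\ell(I(G))$; and then cites $r-\ell(I(G))=s$. Both (1) and (2) fall out immediately. You already invoke exactly this chain, but only for a single component $G_j$; applying it to $G$ itself finishes the lemma in two lines and makes your ``central formula'' unnecessary.

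The central formula
\[
\depth R/I(G)^n=\min\Bigl\{\textstyle\sum_j \depth R_j/I_j^{n_j}\;\Bigm|\;n_j\geqslant 1,\ \sum_j n_j=n+s-1\Bigr\}
\]
is in fact true, but your proposed proof has a gap. For a fixed $\alb\in\N^r$ the Minh--Trung complex $\Delta_{\alb}(I(G)^{(n)})$ does \emph{not} decompose as a single join of the $\Delta_{\alb^{(j)}}(I_j^{(n_j)})$ for one partition $(n_j)$; it is only the \emph{union} over all partitions with $\sum_j(n_j-1)=n-1$ of such joins. Concretely, with $G_1=G_2$ a single edge, $\alb=(3,0,3,0)$ and $n=4$, one gets $\Delta_{\alb}(I(G)^{(4)})=\langle\{1,3\},\{1,4\},\{2,3\}\rangle$, which is not a join of two $0$-dimensional complexes. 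Consequently the K\"unneth formula does not apply directly, and there is no evident reason the union of joins should yield a direct-sum decomposition of $H^i_{\mi}(R/I(G)^n)$ indexed by partitions. Establishing the displayed formula requires a genuine inductive argument with short exact sequences (as later done by H\`a--N.\,V.~Trung--T.\,N.~Trung and by Nguyen--Vu), not a one-shot K\"unneth computation. So either supply that argument, or simply run Eisenbud--Huneke on the whole graph as the paper does.
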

\begin{proof} Since $G$ is bipartite, by Lemma $\ref{L03}$ we have $I(G)$ is normally torsion-free, and so by \cite{HO} the Rees ring  $\mathcal R[I(G)]$ of $I(G)$  is Cohen-Macaulay. Then by \cite{HU} the associated graded ring of $I(G)$  is Cohen-Macaulay as well. Hence, by \cite[Proposition $3.3$]{EH} we have
\begin{enumerate}
\item $\min\{\depth R/I(G)^n \mid n\geqslant 1\} = r -\ell(I(G))$, and
\smallskip
\item $\dstab(I(G)) = \min\{n \geqslant 1 \mid \depth R/I(G)^n = r -\ell(I(G))\}$.
\end{enumerate}
On the other hand, $r-\ell(I(G)) = s$ (see \cite[Page $50$]{W}). Thus the lemma follows.
\end{proof}

In the general case, our main tool to study $\dstab(I(G))$ is a generalized version of a Hochster's formula (see \cite[Theorem $4.1$ in Chapter II]{ST}) to compute local cohomology modules of monomial ideals given in \cite{T}.

Let $\mi := (x_1,\ldots, x_r)$ be the maximal homogeneous ideal of $R$ and $I$ a monomial ideal in $R$. Since $R/I$ is an $\N^r$-graded algebra, $H_{\mi}^i(R/I)$ is an $\Z^r$-graded module over $R/I$.  For every degree $\alb\in\Z^r$ we denote by $H_{\mi}^i(R/I)_{\alb}$ the $\alb$-component of $H_{\mi}^i(R/I)$.

Let $\Delta(I)$ denote the simplicial complex corresponding to the Stanley-Reisner ideal $\sqrt I$. For every $\alb = (\al_1,\ldots,\al_r) \in \Z^r$ we set $G_{\alb} := \{i \ | \ \al_i < 0\}$ and we denote by $\Delta_{\alb}(I)$ the simplicial complex of all sets of the form $F \setminus G_{\alb}$, where $F$ is a face of $\Delta(I)$ containing $G_{\alb}$ such that for every minimal generator $x^{\btb}$ of $I$ there exists an $i \notin F$ such that $\al_i < \bt_i$. To represent $\Delta_{\alb}(I)$ in a more compact way, for every subset $F$ of $[r]$ let $R_F := R[x_i^{-1} \ | \ i \in F \cup G_{\alb}]$ and $I_F := IR_F$. This means that the ideal $I_F$ of $R_F$ is generated by all monomials of $I$ by setting $x_i = 1$ for all $i \in F \cup G_{\alb}$. Then $x^{\alb} \in R_F$ and by \cite[Lemma $1.1$]{GH} we have
\begin{equation} \label{EQ01}  \Delta_{\alb}(I) = \{F\subseteq [r] \setminus G_{\alb} \ | \ x^{\alb} \notin I_F\}.\end{equation}

\begin{lem}\label{L01} {\rm (\cite[Theorem $1$]{T})} $\dim_K H_{\mi}^i(R/I)_{\alb} = \dim_K \widetilde{H}_{i-|G_{\alb}|-1}(\Delta_{\alb}(I); K).$
\end{lem}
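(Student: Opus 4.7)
The plan is to compute $H_{\mi}^i(R/I)$ through the \v{C}ech complex on the sequence $x_1,\ldots,x_r$ and then extract the multidegree-$\alb$ component for each $\alb \in \Z^r$. Since $I$ is monomial, both the \v{C}ech complex and the quotient $R/I$ are naturally $\Z^r$-graded, so the local cohomology decomposes as
$$H_{\mi}^i(R/I) = \bigoplus_{\alb \in \Z^r} H_{\mi}^i(R/I)_{\alb},$$
and each graded piece is the $i$-th cohomology of the $\alb$-component of the \v{C}ech complex $C^{\bullet}(\x;R/I)$, whose $i$-th term is $\bigoplus_{|F|=i} R_F/I_F$.

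First I would describe the $\alb$-component of each summand $R_F/I_F$. The monomial $\x^{\alb}$ belongs to $R_F$ if and only if $\al_j \geqslant 0$ for every $j \notin F \cup G_{\alb}$, which (since by definition $G_{\alb}$ collects exactly the indices where $\alb$ is negative) is equivalent to $F \supseteq G_{\alb}$. When this holds, $(R_F)_{\alb} = K\cdot \x^{\alb}$, and this line survives in $(R_F/I_F)_{\alb}$ precisely when $\x^{\alb} \notin I_F$. By the reformulation in equation (\ref{EQ01}), this non-vanishing is exactly the condition that $F \setminus G_{\alb}$ is a face of $\Delta_{\alb}(I)$. Hence $(R_F/I_F)_{\alb}$ is $K$ if $F \supseteq G_{\alb}$ and $F\setminus G_{\alb} \in \Delta_{\alb}(I)$, and is $0$ otherwise.

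Next I would identify the resulting complex with a shifted reduced cochain complex of $\Delta_{\alb}(I)$. Under the bijection $F \longleftrightarrow F \setminus G_{\alb}$, a set $F$ with $|F|=i$ corresponds to a face of $\Delta_{\alb}(I)$ of dimension $i-|G_{\alb}|-1$. After a careful sign check, the \v{C}ech differential, restricted to the multidegree-$\alb$ pieces, agrees up to sign with the simplicial coboundary of the augmented reduced cochain complex $\c(\Delta_{\alb}(I);K)$ shifted in cohomological degree by $|G_{\alb}|+1$. This gives
$$H_{\mi}^i(R/I)_{\alb} \cong \h^{\,i-|G_{\alb}|-1}(\Delta_{\alb}(I);K),$$
and the universal coefficient theorem over the field $K$ then yields $\dim_K \h^{\,j}(\Delta_{\alb}(I);K) = \dim_K \h_{j}(\Delta_{\alb}(I);K)$, which is the claimed formula.

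The step I expect to be the main obstacle is the combinatorial bookkeeping: verifying that the description (\ref{EQ01}) really captures the multidegree-$\alb$ pieces of the localizations $R_F/I_F$ in all cases (particularly handling the coordinates where $\alb$ is negative via $G_{\alb}$), and reconciling the signs of the \v{C}ech differential with the simplicial coboundary after relabelling $F \mapsto F \setminus G_{\alb}$. Once these are pinned down, the passage from cochain cohomology to reduced homology is routine.
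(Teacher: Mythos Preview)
The paper does not prove this lemma; it is quoted verbatim from Takayama \cite[Theorem~1]{T} with no argument supplied. Your sketch via the $\Z^r$-graded \v{C}ech complex, the identification of $(R_F/I_F)_{\alb}$ with $K$ exactly when $F\supseteq G_{\alb}$ and $F\setminus G_{\alb}\in\Delta_{\alb}(I)$, and the resulting shift by $|G_{\alb}|+1$ to the augmented simplicial cochain complex is precisely Takayama's original proof, so there is nothing to compare.
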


Let $\mathcal F(\Delta)$ denote the set of facets of $\Delta$. If $\mathcal F(\Delta) = \{F_1, \ldots, F_m\}$, we write $\Delta = \left <F_1, \ldots, F_m\right >$. The Stanley-Reisner ideal of $\Delta$ can be written as (see \cite[Theorem $1.7$]{MS}):
$$I_{\Delta} = \bigcap_{F\in \mathcal F(\Delta)} P_F,$$
where $P_F$ is the prime ideal of $R$ generated by variables $x_i$ with  $i\notin F$. For every
integer $n \geqslant 1$, the $n$-th symbolic power of $I_{\Delta}$ is the monomial ideal
$$I_{\Delta}^{(n)} = \bigcap_{F\in \mathcal F(\Delta)} P_F^n.$$
Note that $\Delta(I_{\Delta}^{(n)}) = \Delta$. In \cite[Lemma $1.3$]{MT1} there was given an useful formula for computing $\Delta_{\alb}(I_{\Delta}^{(n)})$. We apply it to edge ideals.

An independent set in a graph $G$ is a set of vertices no two of which are adjacent to each other. An independent set $S$ in $G$ is maximal if the addition to $S$ of any other vertex in the graph destroys the independence. Let $\Delta(G)$ be the set of independent sets of $G$. Then $\Delta(G)$ is a simplicial complex and this complex is the so-called independence complex of $G$; and facets of $\Delta(G)$ are just maximal independent sets of $G$. It is easy to see that $I(G) = I_{\Delta(G)}$.

Now we can compute $\Delta_{\alb}(I(G)^n)$ for bipartite graphs $G$.

\begin{lem}\label{L02} Let $G$ be a bipartite graph. Then, for all $\alb\in \N^r$ and $n \geqslant 1$, we have
$$\Delta_{\alb}(I(G)^n) = \left < F\in \mathcal F(\Delta(G)) \ | \ \sum_{i\notin F} \al_i \leqslant n-1\right>.$$
\end{lem}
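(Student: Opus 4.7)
The plan is to reduce to symbolic powers via Lemma \ref{L03} and then verify the claimed equality of simplicial complexes directly from (\ref{EQ01}). Since $G$ is bipartite, Lemma \ref{L03} gives $I(G)^n = I(G)^{(n)} = \bigcap_{F \in \mathcal F(\Delta(G))} P_F^n$, and $\sqrt{I(G)^n} = I(G) = I_{\Delta(G)}$, so $\Delta(I(G)^n) = \Delta(G)$. Because $\alb \in \N^r$ forces $G_{\alb} = \emptyset$, the description in (\ref{EQ01}) collapses to: $S \in \Delta_{\alb}(I(G)^n)$ if and only if no minimal generator $x^{\btb}$ of $I(G)^n$ has $\beta_i \leq \alpha_i$ for every $i \notin S$.

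For the inclusion $\supseteq$, I would fix a facet $F \in \mathcal F(\Delta(G))$ with $\sum_{i\notin F}\alpha_i \leq n-1$. If some minimal generator $x^{\btb}$ of $I(G)^n$ had $\beta_i \leq \alpha_i$ for all $i \notin F$, then $x^{\btb} \in I(G)^{(n)} \subseteq P_F^n$ would force $\sum_{i \notin F}\beta_i \geq n$, contradicting $\sum_{i\notin F}\alpha_i \leq n-1$. Hence $F$ and all its subsets lie in $\Delta_{\alb}(I(G)^n)$.

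For the inclusion $\subseteq$, the key step is a saturation argument. Given $S \in \Delta_{\alb}(I(G)^n)$, I would argue by contrapositive: assume every facet $F \in \mathcal F(\Delta(G))$ with $F \supseteq S$ satisfies $\sum_{i\notin F}\alpha_i \geq n$. Using that $P_F^n : x_i^{\infty} = P_F^n$ when $i \in F$ and $P_F^n : x_i^{\infty} = R$ when $i \notin F$, one obtains
$$I(G)^n : \left(\prod_{i\in S} x_i\right)^{\!\infty} \;=\; \bigcap_{F \in \mathcal F(\Delta(G)),\ F \supseteq S} P_F^n.$$
By hypothesis $x^{\alb}$ lies in each $P_F^n$ with $F \supseteq S$, hence in this saturation, so $x^{\alb} \cdot (\prod_{i \in S} x_i)^M \in I(G)^n$ for some $M$. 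Any minimal generator $x^{\btb}$ of $I(G)^n$ dividing this monomial satisfies $\beta_i \leq \alpha_i$ for all $i \notin S$, contradicting $S \in \Delta_{\alb}(I(G)^n)$.

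The only delicate point is the saturation identity in this last step; once it is established, both inclusions reduce to elementary degree inequalities. Alternatively, one could simply apply \cite[Lemma 1.3]{MT1} to $I_{\Delta(G)}^{(n)}$ after invoking Lemma \ref{L03} and specialize to $\alb \in \N^r$ (so $G_{\alb} = \emptyset$), which is presumably the author's short argument.
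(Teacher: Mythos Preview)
Your proposal is correct, and the ``alternative'' you sketch at the end is exactly the paper's proof: invoke Lemma~\ref{L03} to identify $I(G)^n$ with $I_{\Delta(G)}^{(n)}$ and then quote \cite[Lemma~1.3]{MT1}. Your longer direct argument via the saturation identity
\[
I(G)^{(n)} : \Bigl(\prod_{i\in S} x_i\Bigr)^{\!\infty} \;=\; \bigcap_{\substack{F\in\mathcal F(\Delta(G))\\ F\supseteq S}} P_F^{\,n}
\]
is also sound; it is essentially an unpacking of \cite[Lemma~1.3]{MT1} in this setting, and the identity holds because colon by a single element distributes over finite intersections of monomial ideals and each $P_F^n$ is $P_F$-primary (so saturating by $x_i$ kills the component when $i\notin F$ and leaves it unchanged when $i\in F$). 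The only thing you use tacitly in the $\subseteq$ direction is that $S\in\Delta_{\alb}(I(G)^n)$ forces $S\in\Delta(G)$, which follows since otherwise some edge monomial becomes a unit in $R_S$ and $I(G)^n_S=R_S$.
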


\begin{proof} Let $\Delta :=\Delta(G)$. Then, $I_{\Delta} = I(G)$. By Lemma $\ref{L03}$, we have $I(G)^n =I(G)^{(n)}$. Therefore, $\Delta_{\alb}(I(G)^n) = \Delta_{\alb}(I_{\Delta}^{(n)})$. The lemma now follows from \cite[Lemma $1.3$]{MT1}.
\end{proof}

We conclude this section with some remarks about operations on monomial ideals. Let $A := K[x_1,\ldots,x_s], B:= K[y_1,\ldots, y_t]$ and $R := K[x_1,\ldots, x_s, y_1,\ldots, y_t]$ be polynomial rings where $\{x_1,\ldots, x_s\}$ and $\{y_1,\ldots, y_t\}$ are two disjoint sets of variables. Then for monomial ideals $I,I_1,I_2$ of $R$ we have
\begin{equation} \label{M1} I\cap (I_1+I_2) = I\cap I_1 + I \cap I_2.
\end{equation}
Let $I_1,I_2$ be monomial ideals in $A$ and let $J_1,J_2$ be monomial ideals in $B$. For simplicity, we denote $I_sR$ by $I_s$ and $J_sR$ by $J_s$ for $s=1,2$, then by \cite[Lemma $1.1$]{HT1} we have
\begin{equation} \label{M2} I_1 J_1 \cap I_2 J_2 = (I_1 \cap I_2) (J_1 \cap J_2).
\end{equation}

\begin{lem}\label{L10} Let $I$ be a proper monomial ideal of $A$ and $J$ a proper monomial ideal of $B$. Then, for all $n\geqslant 1$ we have
$$\depth R/(I + J)^n \geqslant \min\{\depth A/I^m \mid 1\leqslant m \leqslant n\}.$$
\end{lem}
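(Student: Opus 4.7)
The plan is to filter $R/(I+J)^n$ by the images of $I^k R$, and to identify the successive quotients as ``external'' tensor products of $A$-modules with $B$-modules. Concretely, I would set $M_k := (I^k R + (I+J)^n)/(I+J)^n$ for $k = 0, 1, \ldots, n$. Since $I^n \subseteq (I+J)^n$, this gives a decreasing filtration $R/(I+J)^n = M_0 \supseteq M_1 \supseteq \cdots \supseteq M_n = 0$. Iterating the depth lemma on the short exact sequences $0 \to M_{k+1} \to M_k \to M_k/M_{k+1} \to 0$ reduces the problem to bounding the depths of the successive quotients:
$$\depth R/(I+J)^n \geqslant \min_{0 \leqslant k \leqslant n-1} \depth (M_k/M_{k+1}).$$

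The main step is to identify each quotient $M_k/M_{k+1}$. Expanding $(I+J)^n = \sum_{j=0}^n I^{n-j} J^j$ and applying (M1) and (M2), the intersection $I^k R \cap (I+J)^n$ splits as a piece contained in $I^{k+1}R$ (the terms with $j \leqslant n-k-1$, for which $n-j \geqslant k+1$) together with the leftover $I^k J^{n-k}$. Consequently,
$$M_k/M_{k+1} \;\cong\; I^k R / (I^{k+1} R + I^k J^{n-k} R) \;\cong\; (I^k/I^{k+1}) \otimes_K (B/J^{n-k}),$$
with the convention $I^0 = A$. I expect this identification to be the main technical obstacle, since one must track carefully the decomposition $R = A \otimes_K B$ and apply the monomial ideal operations (M1) and (M2).

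With the isomorphism in hand, the standard depth formula for an external tensor product, $\depth_R(M \otimes_K N) = \depth_A M + \depth_B N$ for finitely generated $M$ over $A$ and $N$ over $B$, yields
$$\depth(M_k/M_{k+1}) = \depth_A(I^k/I^{k+1}) + \depth_B(B/J^{n-k}) \geqslant \depth_A(I^k/I^{k+1}),$$
since $J$ is proper so $\depth_B B/J^{n-k} \geqslant 0$. It remains to bound $\depth_A(I^k/I^{k+1})$ by some $\depth A/I^m$ with $1 \leqslant m \leqslant n$: for $k=0$ directly $I^0/I^1 = A/I$; for $1 \leqslant k \leqslant n-1$ the short exact sequence $0 \to I^k/I^{k+1} \to A/I^{k+1} \to A/I^k \to 0$ together with the depth lemma gives $\depth_A(I^k/I^{k+1}) \geqslant \min\{\depth A/I^k, \depth A/I^{k+1}\}$, with both indices in $\{1,\ldots,n\}$. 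Assembling these bounds delivers the claimed inequality.
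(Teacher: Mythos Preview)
Your argument is correct, but it follows a different route from the paper's proof. The paper filters by the partial sums $W_i := I^iJ^{n-i}+I^{i+1}J^{n-i-1}+\cdots+I^n$ of the binomial expansion of $(I+J)^n$, so that $W_0=(I+J)^n$ and $W_n=I^n$. It then uses the Mayer--Vietoris type exact sequence
\[
0 \longrightarrow R/I^{i+1}J^{n-i} \longrightarrow R/I^iJ^{n-i}\oplus R/W_{i+1} \longrightarrow R/W_i \longrightarrow 0,
\]
coming from $W_i=I^iJ^{n-i}+W_{i+1}$ and $I^iJ^{n-i}\cap W_{i+1}=I^{i+1}J^{n-i}$ (the latter obtained from (\ref{M1}) and (\ref{M2})), and appeals to the depth formula $\depth R/I^aJ^b=\depth A/I^a+\depth B/J^b+1$ from \cite[Lemma~2.2]{HT1} rather than to a general K\"unneth-type identity.

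Your approach instead filters $R/(I+J)^n$ by the images of $I^kR$ and identifies each graded piece as the external tensor product $(I^k/I^{k+1})\otimes_K(B/J^{n-k})$; this is arguably more conceptual, since it recognises the filtration as an associated-graded construction. The price is an extra step: you must bound $\depth_A(I^k/I^{k+1})$ via the exact sequence $0\to I^k/I^{k+1}\to A/I^{k+1}\to A/I^k\to 0$, and you invoke the general formula $\depth_R(M\otimes_K N)=\depth_A M+\depth_B N$, which, while standard, is not stated in the paper. The paper's route stays entirely at the level of quotients $R/I^aJ^b$ and avoids talking about the associated graded of $I$, at the cost of a slightly less transparent inductive structure. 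Both arguments use the monomial identities (\ref{M1}) and (\ref{M2}) in essentially the same place, namely to compute the relevant intersections.
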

\begin{proof} Since the case $I=\zv$ or $J =\zv$ is obvious, so we may assume that $I$ and $J$ are nonzero ideals. For each $i=0,\ldots,n$, we put:
$$W_i := I^iJ^{n-i} +\cdots+I^nJ^0 \subseteq R,$$
where $I^0 = J^0 = R$. Since $W_0 = (I+J)^n$, in order to prove the lemma it suffices to show that
\begin{equation}\label{Wi}
\depth R/W_i \geqslant  \min\{\depth A/I^j\mid \max\{i,1\} \leqslant j \leqslant n\} \text{ for all } i=0,\ldots,n.
\end{equation}
Indeed, if $i=n$, then $\depth R/W_n =\depth R/I^n = \depth A/I^n +t \geqslant \depth A/I^n$. Next assume that the claim holds for $i+1$ with $0\leqslant i < n$. By Equations ($\ref{M1}$) and ($\ref{M2}$) we have $I^iJ^{n-i}\cap W_{i+1} = I^{i+1}J^{n-i}$. Since $W_i = I^iJ^{n-i} +W_{i+1}$, we have  an exact sequence
$$\zv \longrightarrow R/I^{i+1}J^{n-i} \longrightarrow R/I^iJ^{n-i} \oplus R/W_{i+1} \longrightarrow R/W_i \longrightarrow \zv.$$
By Depth Lemma (see, e.g.,  \cite[Proposition $1.2.9$]{BH}), we have
$$\depth R/W_i \geqslant \min\{\depth R/I^{i+1}J^{n-i}-1,  \depth R/I^iJ^{n-i}, \depth R/W_{i+1}\}.$$
On the other hand, by \cite[Lemma $2.2$]{HT1} we have
$$\depth R/I^{i+1}J^{n-i}-1 = \depth A/I^{i+1} + \depth B/J^{n-i} \geqslant  \depth A/I^{i+1}.$$
Together with the induction hypothesis we then get
$$\depth R/W_i \geqslant \min\{\depth R/I^iJ^{n-i}, \depth A/I^j \mid j=i+1,\ldots,n\}.$$

If $i \geqslant 1$, by \cite[Lemma $2.2$]{HT1} we have $$\depth R/I^iJ^{n-i} = \depth A/I^i + \depth B/J^{n-i}+1 \geqslant \depth A/I^i,$$
which yields the claim.

If $i = 0$, then $\depth R/W_0 \geqslant \min\{\depth R/J^{n}, \depth A/I^j \mid j=1,\ldots,n\}$. Note that
$\depth R/J^{n} = s +\depth B/J^n \geqslant s \geqslant \depth A/I$, hence the claim also holds. The proof now is complete.
\end{proof}

\section{Depths of powers of edge ideals of connected nonbipartite graphs}

Note that for a graph $G$ we always assume that $V(G) = [r]$; $R=K[x_1,\ldots,x_r]$ is a polynomial ring over fields $K$ and $\mi =(x_1,\ldots,m_r)$ is the maximal homogeneous ideal of $R$. In this section we always assume that $G$ is a connected nonbipartite graph.

By Lemma $\ref{LT21}$ we have $\dstab(I(G)) = \min\{n\geqslant 1 \mid \mi \in \ass R/I(G)^n\}$. Based on \cite{CMS}, we will determine explicitly when $\mi\in \ass R/I(G)^n$ for a unicylic graph $G$.

Recall that a vertex cover (or a cover) of  $G$ is a subset $S$ of $V(G)$ such that every edge of $G$ has at least one endpoint in $S$. A cover is minimal if none of its proper subsets is itself a cover. It is well-known that $P = (x_{i_1}, \ldots ,x_{i_t})$ is a minimal prime of the edge ideal $I(G)$ if and only if $\{i_1, \ldots , i_t\}$ is a minimal cover of $G$. For a subset $U$ of $V(G)$, the neighbor set of $U$ is the set
$$N(U) := \{v \in V(G) \mid v \text{ is adjacent to some vertex in } U\}.$$

We now describe the process that builds $\ass R/I(G)^n$ for a unicylic graph $G$. Let $C$ be a cycle of $G$ of length $2k-1$. Let $R_k$ be the set of vertices of $C$, $B_k:=N(R_k)\setminus R_k$ and a monomial
$$d_k := \prod_{i\in R_k} x_i.$$

We now build recursively sets $R_n$, $B_n$ and a monomial $d_n$ for $n\geqslant k$. Suppose that $i\in R_s$ and $j\in R_s\cup B_s$ for some $s\geqslant k$ such that $\{i,j\}$ is an edge of $G$. Now if $j\in R_s$, then let $R_{s+1}:=R_s$ and $B_{s+1}:=B_s$. If $j\in B_s$, then let $R_{s+1} := R_s\cup \{j\}$ and $B_{s+1} := (B_s \cup N(j))\setminus R_{s+1}$. In either case, let $d_{s+1} := d_s (x_ix_j)$.

Now for such a couple $(R_n,B_n$) with $n\geqslant k$, we take $V$ to be any minimal subset of $V(G)$ such that $R_n\cup B_n\cup V$ is a  cover of $G$. Then, $(R_n,B_n,V) := (x_i\mid i\in R_n\cup B_n\cup V)$ is an associated prime of $R/I(G)^n$ by \cite[Theorem $3.3$]{CMS}. Let $P_n$ be the set of  such all prime ideals. Then, 
by \cite[Theorem $5.6$]{CMS} we have
\begin{equation}\label{ASSOC}
\ass R/I(G)^n=\Min(R/I(G))\cup P_n.
\end{equation}

For unicyclic graphs, we have the following observation.

\begin{rem} \label{rem_path} Assume that $G$ is a unicyclic graph with a cycle $C$ such that $G\ne C$. For any $v\in V(G)\setminus V(C)$, there is a unique simple path of the form: $v_0,v_1\ldots,v_d$, where $v_0\in V(C)$, $v_1,\ldots,v_d\notin V(C)$ and $v_d = v$. We say that this path connects $C$ and $v$.  Moreover,
\begin{enumerate}
\item $d_G(v,C) = d$.
\item This simple path can extend to a simple path connecting $C$ to a leaf, i.e., there are vertices $u_1,\ldots, u_t$ such that $u_s$ is a leaf and $v_0,v_1\ldots,v_d,u_1,\ldots, u_t$ is a simple path.
\item If $d_G(v,C)$ is maximal, i.e., $d_G(v,C) \geqslant d_G(u,C)$ for any $u\in V(G)$, then $v$ is a leaf.  Assume further that $d\geqslant 2$, then $N_G(v_{d-1})$ contains only one non-leaf $v_{d-2}$.
\end{enumerate}
\end{rem}

We now can determine $\dstab(I(G))$ with unicyclic nonbipartite graphs $G$.

\begin{lem}\label{T6} Let $G$ be a unicyclic nonbipartite graph. If the length of the unique cycle is $2k-1$, then $\dstab(I(G)) = \upsilon(G)-\varepsilon_0(G)-k+1$.
\end{lem}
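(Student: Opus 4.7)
The plan is to combine Lemma~\ref{LT21}(1), which reduces $\dstab(I(G))$ to $\min\{n\geqslant 1:\mi\in\ass R/I(G)^n\}$, with the explicit description \eqref{ASSOC} of the non-minimal associated primes built from the recursive construction $(R_n,B_n,V)$ anchored at the unique odd cycle $C$ of length $2k-1$. Since $\mi\notin\Min(R/I(G))$, having $\mi\in\ass R/I(G)^n$ amounts to $\mi\in P_n$, i.e., some construction yields $R_n\cup B_n\cup V=V(G)$ with $V$ a minimal vertex-cover-completion of $R_n\cup B_n$. Using that $G$ has no loops, one shows $V=\emptyset$ is forced: if $v\in V$, every edge $\{v,w\}$ satisfies $w\in V(G)\setminus\{v\}=R_n\cup B_n\cup V\setminus\{v\}$, so $V\setminus\{v\}$ would already cover $G$, contradicting minimality of $V$. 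Therefore $\mi\in\ass R/I(G)^n$ iff the construction can reach $R_n\cup B_n=V(G)$, equivalently iff the final set $R_n$ is a dominating set of $G$.

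Next I would unpack the recursion itself. The set $R_s$ always contains $V(C)$ and induces a connected subgraph of $G$; it begins at $|R_k|=2k-1$ and at each step either grows by exactly one vertex of $N(R_s)\setminus R_s$ or stays unchanged (the latter is always available, because $C$ supplies internal edges of $R_s$). Hence a valid construction can realise a prescribed $T$ as $R_n$ exactly when $G[T]$ is connected, $V(C)\subseteq T$, and $|T|-(2k-1)\leqslant n-k$. Combining with the previous step gives
\[
\dstab(I(G)) = \min\bigl\{\,|T|-k+1 \ \bigm|\ V(C)\subseteq T,\ G[T]\text{ connected},\ T\text{ dominates }G\,\bigr\}.
\]

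The combinatorial heart of the argument is to show that the minimum such $|T|$ equals $\upsilon(G)-\varepsilon_0(G)$, the number of non-leaves. For the upper bound, the set $T_0$ of non-leaves contains $V(C)$ (cycle vertices have degree $\geqslant 2$), dominates $G$ (every leaf's unique neighbour must itself be a non-leaf because $G$ is connected), and is connected (interior vertices of any $G$-path have degree $\geqslant 2$, hence lie in $T_0$). For the lower bound, suppose some non-leaf $u\notin T$; then $u\notin V(C)$, so by Remark~\ref{rem_path} $u$ sits in a pendant tree attached to some $v\in V(C)$ and has at least two neighbours inside that tree. Removing $u$ from $G$ splits this pendant tree into the component containing $v$ together with at least one further component $D$; since $T$ is connected and contains $V(C)$, necessarily $T\cap D=\emptyset$, whence vertices of $D$ have no neighbour in $T$, contradicting domination. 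Plugging $|T|=\upsilon(G)-\varepsilon_0(G)$ into the displayed formula yields $\dstab(I(G)) = \upsilon(G)-\varepsilon_0(G)-k+1$.

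The main obstacle is the lower bound on $|T|$: one must exploit unicyclicity carefully to ensure that deleting any non-cycle non-leaf really produces a nonempty component of $G$ disjoint from $V(C)$ whose domination by $T$ then becomes impossible. Beyond that, the proof is just bookkeeping of how many steps the construction needs to expand $R$ from $V(C)$ up to $T_0$.
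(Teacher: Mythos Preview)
Your proof is correct and follows essentially the same route as the paper's: both use Lemma~\ref{LT21} and \eqref{ASSOC} to reduce to the question of when some construction yields $R_n\cup B_n=V(G)$, and both then exploit the tree structure of $G\setminus V(C)$ to pin down the minimal such $n$. Your repackaging in terms of connected dominating sets containing $V(C)$ is a clean reformulation; your $V=\emptyset$ argument is slicker than the paper's case analysis on leaves, and you derive the upper bound directly from the construction (via $T_0=\{\text{non-leaves}\}$) rather than citing \cite{CMS}, but the combinatorial core---that $R_n$ must contain every non-leaf, equivalently the paper's Claim~2 that $|B_n|\leqslant\varepsilon_0(G)$---is identical.
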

\begin{proof} By \cite[Corollaries $3.4$ and $4.3$]{CMS} we have 
$$\mi\in\ass R/I(G)^n \text{ for all } n\geqslant \upsilon(G)-\varepsilon_0(G)-k+1.$$
Therefore,
$$\depth R/I(G)^n = 0 \text{ for all } n\geqslant \upsilon(G)-\varepsilon_0(G)-k+1,$$
so that $\dstab (I(G)) \leqslant \upsilon(G)-\varepsilon_0(G)-k+1$.

We next prove the converse inequality. It suffices to show that if $\mi\in \ass R/I(G)^n$, then $n\geqslant  \upsilon(G)-\varepsilon_0(G)-k+1$.

By Equation $(\ref{ASSOC})$ we deduce that $\mi\in P_n$. Thus, $\mi = (R_n,B_n,V)$ where $V$ is a minimal subset of $V(G)$ such that $R_n\cup B_n\cup V$ is a vertex cover of $G$. In particular, $V(G)=R_n\cup B_n\cup V$. 

{\it Claim $1$:} $V=\emptyset$. Indeed,  if $V$ contains no leaves of $G$, then every leaf of $G$ is in either $R_n$ or $B_n$, and so $R_n\cup B_n=V(G)$ by Remark $\ref{rem_path}$. This forces $V=\emptyset$.

Suppose $V$ contains a leaf, say $i$. Let $j$ be the unique neighbor of $i$ in $G$. Then, $j\in V(G)=R_n\cup B_n\cup V$. Therefore, $R_n\cup B_n\cup (V\setminus \{i\})$ is also a vertex cover of $G$. This contradicts the minimality of $V$. Hence,  $V=\emptyset$, as claimed.

{\it Claim $2$:} $|B_n|\leqslant \varepsilon_0(G)$. Indeed, assume on the contrary that $|B_n| >|\varepsilon_0(G)|$, so that $B_n$ contains a non-leaf of $G$, say $i$. Let $p$ be a simple path connecting $C$ and a leaf that passes through $i$. Let $j$ be a vertex of $p$ after $i$. Then, by Remark $\ref{rem_path}$ and the construction of $R_n$ and $B_n$ we deduce that $j\notin R_n\cup B_n$, so $j\notin V(G)$ by Claim $1$, a contradiction. Hence, $|B_n| \leqslant \varepsilon_0(G)|$, as claimed.

We now prove the lemma. Since $|R_k|=2k-1$ and $|R_n|\leqslant |R_k| + (n-k)$, together with Claim $2$ we obtain $\upsilon(G) = |R_n| + |B_n| \leqslant |R_k|+(n-k)+\varepsilon_0(G) = n+k-1+\varepsilon_0(G),$
so $n\geqslant \upsilon(G)-\varepsilon_0(G)-k+1$, as required.
\end{proof}

\begin{lem} \label{A1} Let $G$ be a unicyclic nonbipartite graph. Assume that the unique odd cycle of $G$ is of length ${2k-1}$. Let $n:=\upsilon(G)-\varepsilon_0(G)-k+1$. Then, there is a monomial $f$ of $R$  such that
$\deg f  = 2n-1$ and $f x_i \in I(G)^n$ for all $i=1,\ldots,r$.
\end{lem}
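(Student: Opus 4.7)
\textbf{The plan} is to construct $f$ explicitly via the inductive procedure from \cite{CMS} recalled just before Lemma~\ref{T6}: one extends the sets $R_s, B_s$ and the monomial $d_s$ from $s=k$ up to $s=n$, and the key point is to choose the extensions so that after exactly $n-k$ steps one reaches $R_n \cup B_n = V(G)$. Since each extension multiplies $d_s$ by an edge monomial $x_i x_j$, the resulting $f := d_n$ will automatically have $\deg f = (2k-1) + 2(n-k) = 2n-1$; the substantive content of the proof is (a) showing the extensions can be chosen to exhaust $V(G)$ in $n-k$ steps, and (b) verifying the containments $d_n x_v \in I(G)^n$.

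\textbf{Extensions that exhaust} $V(G)$. Let $T \subseteq V(G)$ be the set of non-leaves; then $|T| = \upsilon(G) - \varepsilon_0(G)$, $V(C) \subseteq T$, and the subgraph of $G$ induced on $T$ is connected (any path joining two non-leaves cannot use a leaf as an interior vertex, as leaves have degree one). Start with $R_k := V(C)$, $B_k := N(V(C)) \setminus V(C)$, $d_k := \prod_{v \in V(C)} x_v$. While $R_s \subsetneq T$, connectedness of $T$ lets us pick $j \in T \cap B_s$ adjacent to some $i \in R_s$; set $R_{s+1} := R_s \cup \{j\}$, $B_{s+1} := (B_s \cup N(j)) \setminus R_{s+1}$ and $d_{s+1} := d_s x_i x_j$. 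After exactly $|T| - (2k-1) = n-k$ steps we reach $R_n = T$, and then $B_n = N(T) \setminus T$ is precisely the set of leaves (each leaf's unique neighbor lies in $T$), so $R_n \cup B_n = V(G)$.

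\textbf{Containments.} I will prove by induction on $s$ the stronger statement: $d_s x_v \in I(G)^s$ for every $v \in R_s \cup B_s$. Writing $C = u_1 u_2 \cdots u_{2k-1}$, the base case $s=k$ rests on two explicit $k$-edge factorizations: for $v = u_1 \in V(C)$,
\[ d_k x_{u_1} = (x_{u_1} x_{u_2})(x_{u_3} x_{u_4}) \cdots (x_{u_{2k-3}} x_{u_{2k-2}})(x_{u_{2k-1}} x_{u_1}), \]
and for $v \in B_k$ with a neighbor $u_1 \in V(C)$,
\[ d_k x_v = (x_{u_1} x_v)(x_{u_2} x_{u_3})(x_{u_4} x_{u_5}) \cdots (x_{u_{2k-2}} x_{u_{2k-1}}). \]
In the inductive step, given the new edge $\{i,j\}$ with $i \in R_s$, $j \in B_s$, each $v \in R_{s+1} \cup B_{s+1}$ is handled by peeling off a single edge: for $v \in R_s \cup (B_s \setminus \{j\})$ write $d_{s+1} x_v = (d_s x_v)(x_i x_j)$; for $v = j$ write $d_{s+1} x_j = (d_s x_j)(x_i x_j)$ and apply the hypothesis at $j \in B_s$; for $v \in N(j) \setminus R_{s+1}$ write $d_{s+1} x_v = (d_s x_i)(x_j x_v)$ and apply the hypothesis at $i \in R_s$ together with the edge $\{j,v\}$.

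\textbf{The main obstacle} is the base case for cycle vertices, where one must arrange the $2k$ variables of $d_k x_v$ (one squared, the rest linear) into exactly $k$ edges of $C$. The first factorization above succeeds precisely because $2k-1$ is odd, so the wrap-around edge $\{u_{2k-1}, u_1\}$ absorbs the doubled variable $x_{u_1}$. This is the point at which the nonbipartiteness of $G$ is indispensable.
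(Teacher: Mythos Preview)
Your proof is correct and rests on the same object as the paper's---the monomial $f=d_n$ coming from the recursive construction $(R_s,B_s,d_s)$ of \cite{CMS}---but your execution is more direct. The paper argues indirectly: it invokes Lemma~\ref{T6} to get $\mi\in P_n$, uses the argument of Claim~1 there to force $V=\emptyset$ (so that some sequence of extensions already satisfies $R_n\cup B_n=V(G)$), and then cites \cite[Lemma~3.2]{CMS} for the containments $d_n x_i\in I(G)^n$. You instead build the chain $R_k\subsetneq\cdots\subsetneq R_n=T$ by hand (growing inside the connected set $T$ of non-leaves) and reprove the containment $d_s x_v\in I(G)^s$ for $v\in R_s\cup B_s$ by the edge-peeling induction. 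Your route is self-contained and does not need Lemma~\ref{T6} or the associated-prime description~\eqref{ASSOC} at all, which is a mild advantage in this paper since Lemma~\ref{A1} is only used to feed into Proposition~\ref{A3}; the paper's route is shorter on the page because it outsources both the existence of a good $(R_n,B_n)$ and the membership statement to \cite{CMS}.
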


\begin{proof} By Lemma $\ref{T6}$ and Equation $(\ref{ASSOC})$ we have $\mi\in P_n$. Thus, $\mi = (R_n,B_n,V)$ where $V$ is a minimal subset of $V(G)$ such that $R_n\cup B_n\cup V$ is a vertex cover of $G$. In particular, $V(G)=R_n\cup B_n\cup V$. By the same way as in the proof of Claim $1$ in Lemma $\ref{T6}$ we have $V = \emptyset$. Hence, $R_n\cup B_n = \{1,\ldots,r\}$.

Let $f := d_n$. Together with \cite[Lemma $3.2$]{CMS} we imply that $\deg (f) = 2n-1$ and $f x_i \in I(G)^n$ for all $i=1,\ldots,r$, as required.
\end{proof}

Let $G$ be a connected nonbipartite graph and let $2l-1$ be the minimum length of odd cycles of $G$. Then $\dstab(G) \leqslant \upsilon(G)-\varepsilon_0(G)-l+1$ by \cite[Corollaries $3.4$ and $4.3$]{CMS}. The following result improves this bound a little bit.

\begin{prop}\label{A3} Let $G$ be a connected nonbipartite graph. Let $2k-1$ be the maximum length of odd cycles of $G$. Then, $\dstab(I(G)) \leqslant \upsilon(G) -\varepsilon_0(G) - k+1$.
\end{prop}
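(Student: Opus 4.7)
My plan is to reduce to the unicyclic case handled by Lemma \ref{A1}. Fix an odd cycle $C$ of $G$ of length $2k-1$, and build a unicyclic spanning subgraph $H \subseteq G$ whose unique cycle is $C$. One concrete construction: starting from $V(C)$, perform a breadth-first exploration of $G$, and for each newly visited vertex $v$ adjoin one edge of $G$ joining $v$ to a previously visited vertex. Together with $E(C)$, these edges form a connected spanning subgraph $H$ of $G$ whose only cycle is $C$ (removing any edge of $C$ leaves a spanning tree). Thus $H$ is unicyclic, nonbipartite, with $\upsilon(H) = \upsilon(G)$ and the same parameter $k$. The key observation is that every leaf of $G$ is also a leaf of $H$: its unique $G$-edge must lie in the connected spanning subgraph $H$, so $\varepsilon_0(H) \geqslant \varepsilon_0(G)$.

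Set $n_H := \upsilon(H) - \varepsilon_0(H) - k + 1$, so that $n_H \leqslant \upsilon(G) - \varepsilon_0(G) - k + 1 =: n$. By Lemma \ref{A1} applied to $H$, there is a monomial $f$ of degree $2n_H - 1$ with $f x_i \in I(H)^{n_H}$ for every $i = 1, \ldots, r$. Since $I(H) \subseteq I(G)$, this gives $f x_i \in I(G)^{n_H}$ for all $i$; on the other hand, every monomial of $I(G)^{n_H}$ has degree at least $2n_H$, so $f \notin I(G)^{n_H}$. Therefore $\mi \in \ass(R/I(G)^{n_H})$. Because the sequence $\{\ass(R/I(G)^m)\}_{m \geqslant 1}$ is increasing by \cite[Theorem~2.15]{MMV}, we conclude $\mi \in \ass(R/I(G)^n)$, and Lemma \ref{LT21} gives $\dstab(I(G)) \leqslant n$.

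The only genuinely substantive step is producing the spanning subgraph $H$ and verifying $\varepsilon_0(H) \geqslant \varepsilon_0(G)$; after that the proof is a clean reduction to Lemma \ref{A1} together with a degree count. The mild subtlety in the construction is that a naive spanning tree of $G$ with a single restored chord would in general not have $C$ as its unique cycle, which is why we build $H$ outward from $C$ rather than from a spanning tree.
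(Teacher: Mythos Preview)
Your proof is correct and follows essentially the same approach as the paper: construct a connected unicyclic spanning subgraph $H$ of $G$ whose unique cycle is $C$, invoke Lemma~\ref{A1} for $H$, push the monomial $f$ into $I(G)^{n_H}$ via $I(H)\subseteq I(G)$, use the degree count to see $f\notin I(G)^{n_H}$, and then compare $\varepsilon_0(H)\geqslant\varepsilon_0(G)$ to bound $\dstab(I(G))$. The paper builds $H$ by iteratively deleting an edge from each extra cycle rather than by your BFS expansion from $V(C)$, but the resulting object and its role are identical; your appeal to \cite[Theorem~2.15]{MMV} is harmless but unnecessary, since Lemma~\ref{LT21} already gives $\dstab(I(G))\leqslant n_H\leqslant n$ directly from $\depth R/I(G)^{n_H}=0$.
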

\begin{proof} Let $C$ be an odd cycle of $G$ of length $2k-1$. If $C'$ is another cycle of $G$, then $C'$ has an edge $e$  not lying on the cycle $C$. Delete this edge from $G$, thereby obtaining a connected subgraph $G'$ of $G$ with $V(G') = V(G)$ and $C$ is a cycle of $G'$. This process continues until we obtain a connected subgraph $H$ of $G$ such that $V(G)=V(H)$ and $H$ has only one cycle, that is $C$. Let $n :=\upsilon(H)-\varepsilon_0(H)-k+1$. By Lemma \ref{A1}, there is a monomial $f\in R$ such that $\deg f = 2n-1$ and
$x_if\in I(H)^n$ for all $i=1,\ldots,r$. Since $I(H)\subseteq I(G)$, we have
\begin{equation}\label{A4} x_if\in I(G)^n \text{ for all } i=1,\ldots,r.\end{equation}
As $I(G)$ is generated by quadratic monomials and $\deg f =2n-1$, so $f\notin I(G)^n$. Together with Equation ($\ref{A4}$) one has
$I(G)^n:f =\mi$. Hence, $\depth R/I(G)^n = 0$, which implies $\dstab(I(G)) \leqslant n$ by Lemma \ref{LT21}. Since $\upsilon(G)=\upsilon(H)$ and $\varepsilon_0(G) \leqslant \varepsilon_0(H)$,
$$\dstab(I(G)) \leqslant n \leqslant \upsilon(G) -\varepsilon_0(G)-k+1,$$
as required.
\end{proof}

\section{Depths of powers of edge ideals of connected bipartite graphs}

Let $G$ be a biparite graph with bipartition $(X,Y)$. Clearly, $X$ and $Y$ are then facets of $\Delta(G)$. Assume further that $G$ is connected. By Lemma $\ref{C1}$, one has $\dstab(I(G)$ is the smallest integer $n$ such that $\depth R/I(G)^n = 1$. For such graphs we can find $\dstab(I(G))$ via integer linear programming.

\begin{lem}\label{LT1} Let $G$ be a connected bipartite graph with bipartition $(X,Y)$ and $n$ a positive integer. Then, $\depth R/I(G)^n = 1$ if and only if $\Delta_{\alb}(I(G)^n)=\left<X,Y\right>$ for some $\alb=(\al_1,\ldots,\al_r)\in \N^r$. Moreover, if $n=\dstab(I(G))$, then such $\alb$ must satisfy
$$\sum_{i\notin X}\al_i = \sum_{i\notin Y}\al_i = n-1.$$
\end{lem}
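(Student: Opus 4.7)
The plan is to combine Takayama's formula (Lemma \ref{L01}) with the explicit description of $\Delta_{\alb}(I(G)^n)$ from Lemma \ref{L02}. For the \emph{if} direction, if $\Delta_{\alb}(I(G)^n)=\langle X,Y\rangle$ for some $\alb\in\N^r$, then $X\cap Y=\emptyset$ (bipartition) makes this complex disconnected, so $\widetilde{H}_0\ne 0$ and Takayama with $|G_{\alb}|=0$ gives $H^1_{\mi}(R/I(G)^n)_{\alb}\neq 0$, so $\depth R/I(G)^n\leqslant 1$. The reverse inequality $\depth R/I(G)^n\geqslant 1$ follows from normal torsion-freeness (Lemma \ref{L03}): $\ass R/I(G)^n=\Min R/I(G)$ excludes $\mi$ since $G$ has no isolated vertices.

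For the \emph{only if} direction, suppose $\depth R/I(G)^n=1$. Takayama gives an $\alb\in\Z^r$ with $|G_{\alb}|\leqslant 1$ realizing $H^1_{\mi}\neq 0$; a brief inspection rules out $|G_{\alb}|=1$ except when $G=K_{1,r-1}$ is a star, in which case $\{j\}\in\mathcal F(\Delta(G))$ forces $\Delta_{\alb}(I(G)^n)=\{\emptyset\}$, and then setting $\al'_j:=0$ produces $\alb'\in\N^r$ with $\Delta_{\alb'}(I(G)^n)=\langle X,Y\rangle$. So we may assume $\alb\in\N^r$ and $\Delta:=\Delta_{\alb}(I(G)^n)$ is disconnected, with vertex set partitioned as $V_1\sqcup V_2$ so that each facet lies in one block. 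Put $a_F:=\sum_{i\notin F}\al_i$, $S:=\sum_i\al_i$, and $\alpha_k:=\sum_{i\in V_k}\al_i$. By Lemma \ref{L02}, every facet $F$ satisfies $\sum_{i\in F}\al_i\geqslant S-n+1$; choosing one in each block gives $\alpha_1,\alpha_2\geqslant S-n+1$, hence $S\leqslant 2n-2$ and $\min(a_X,a_Y)\leqslant n-1$. Therefore at least one of $X,Y$, say $X$, is a facet of $\Delta$; taking $X\subseteq V_1$, any facet $F\subseteq V_2$ satisfies $F\cap X=\emptyset$, so $F\subseteq Y$, and by maximality of independence we get $F=Y$. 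The symmetric argument on $V_1$ rules out facets other than $X$, so $\Delta=\langle X,Y\rangle$; the remaining cases (neither or only one of $X,Y$ a facet) contradict $S\leqslant 2n-2$ or the maximality argument.

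For the \emph{moreover} part, set $s:=n-1-a_X\geqslant 0$ and $t:=n-1-a_Y\geqslant 0$, and use the same $\alb$ with $n':=n-\min(s,t)$. The inequalities $a_X\leqslant n'-1$, $a_Y\leqslant n'-1$, and $a_F\geqslant n\geqslant n'$ for $F\neq X,Y$ still hold, so $\Delta_{\alb}(I(G)^{n'})=\langle X,Y\rangle$ and $\depth R/I(G)^{n'}=1$; if $\min(s,t)>0$ then $n'<n$ contradicts $n=\dstab(I(G))$, forcing $\min(s,t)=0$. Assume without loss of generality $t=0$, so $a_Y=n-1$. If $s>0$ (and $n\geqslant 2$), then $a_Y\geqslant 1$ yields $j\in X$ with $\al_j\geqslant 1$; set $\alb':=\alb-\e_j\in\N^r$. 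Then $a'_X=a_X\leqslant n-2$, $a'_Y=n-2$, and for any other facet $F$, $a'_F\in\{a_F,a_F-1\}\geqslant n-1$. So $\Delta_{\alb'}(I(G)^{n-1})=\langle X,Y\rangle$ and $\depth R/I(G)^{n-1}=1$, again contradicting $n=\dstab(I(G))$; hence $s=0$. The case $n=1$ forces $\alb=\zv$ and the conclusion is immediate. The main technical obstacle is the structural step that any disconnected $\Delta_{\alb}(I(G)^n)$ must equal $\langle X,Y\rangle$; the bookkeeping via the bound $S\leqslant 2n-2$ together with the uniqueness of the bipartition in a connected bipartite graph is what closes the argument.
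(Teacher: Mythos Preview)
Your proof is correct and follows essentially the same route as the paper's. The core structural step---showing that a disconnected $\Delta_{\alb}(I(G)^n)$ must equal $\langle X,Y\rangle$ via the bound $S\leqslant 2n-2$ and the maximality of $X,Y$ as independent sets---and the two-step reduction in the ``moreover'' part (first force $\min(s,t)=0$, then subtract $\e_j$ to force the other) match the paper's argument almost exactly.

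The one genuine difference is in how you obtain $\alb\in\N^r$. The paper invokes \cite[Corollary~1.2]{MT1} directly, which already restricts to nonnegative $\alb$. You instead work from Takayama's formula and handle the case $|G_{\alb}|=1$ by hand: $\Delta_{\alb}=\{\emptyset\}$ forces $\{j\}\in\mathcal F(\Delta(G))$, hence $j$ is adjacent to every other vertex, so $G$ is a star; then resetting $\al_j:=0$ recovers $\langle X,Y\rangle$. This is correct (it is exactly why \cite[Corollary~1.2]{MT1} works here), though your write-up compresses it to the point of opacity---the phrase ``a brief inspection rules out $|G_{\alb}|=1$ except when $G=K_{1,r-1}$'' deserves the one-line justification above. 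Your approach is more self-contained; the paper's is cleaner since it offloads this reduction to an existing result.
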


\begin{proof} Since $G$ is bipartite, by Lemma $\ref{L03}$ one has $I(G)^n = I(G)^{(n)}$. Hence,
$$\depth R/I(G)^n = \depth R/I(G)^{(n)} \geqslant 1,$$
and hence $\depth R/I(G)^n = 1$ if and only if $H_{\mi}^1(R/I(G)^n) \ne \zv$. By \cite [Corollary $1.2$]{MT1} this is equivalent to the condition $\Delta_{\alb}(I(G)^n)$ being disconnected for some $\alb=(\al_1,\ldots,\al_r) \in \N^r$.

Therefore, in order to prove the lemma it suffices to show that if $\Delta_{\alb}(I(G)^n)$ is disconnected, then $\Delta_{\alb}(I(G)^n)=\left<X,Y\right>$. Indeed, since $\Delta_{\alb}(I(G)^n)$ is disconnected, there are two facets $F$ and $H$ of it such that $F \cap H =\emptyset$. Hence, $(V(G) \setminus F) \cup (V(G) \setminus H) = V(G)$. Together with the fact that $X \cap Y = \emptyset$ and $X \cup Y = V(G)$ we get
$$\sum_{i \notin X}\al_i +\sum_{i \notin Y}\al_i = \sum_{i \in V(G)}\al_i \leqslant \sum_{i \notin F}\al_i +\sum_{i \notin H}\al_i.$$
Since $F$ and $H$ are members of $\mathcal F(\Delta_{\alb}(I(G)^n))$, by Lemma $\ref{L02}$ we have
$$\sum_{i \notin F}\al_i\leqslant n-1, \ \text{ and } \sum_{i \notin H}\al_i \leqslant n-1.$$
Therefore,
$$\sum_{i \notin X}\al_i +\sum_{i \notin Y}\al_i \leqslant 2(n-1),$$
which yields $$\sum_{i \notin X}\al_i\leqslant n-1 \ \text{ or } \sum_{i \notin Y}\al_i\leqslant n-1.$$
Thus we may assume that $$\sum_{i \notin X}\al_i\leqslant n-1,$$
and thus $X \in \Delta_{\alb}(I(G)^n)$ by Lemma $\ref{L02}$.  As $\Delta_{\alb}(I(G)^n)$ is disconnected, there
is a facet $L$ of $\Delta_{\alb}(I(G)^n)$ such that $X \cap L =\emptyset$. We then have $L\subseteq V(G) \setminus X = Y$. The maximality of $L$ forces $L = Y$, hence $Y\in \Delta_{\alb}(I(G)^n)$. If $\Delta_{\alb}(I(G)^n)$ has another facet, say $T$, that is different from $X$ and $Y$, then neither $X$ nor $Y$ contains $T$, and then $T$ meets both $X$ and $Y$. This is impossible since $\Delta_{\alb}(I(G)^n)$ is disconnected. Hence, $\Delta_{\alb}(I(G)^n) = \left<X, Y\right>$, as claimed.

Finally, assume that $n=\dstab(I(G))$. Then, by Lemma $\ref{C1}$, $n$ is the smallest positive integer such that $\depth R/I(G)^n=1$.

Assume that $\sum_{i\notin X}\al_i < n-1$ and $\sum_{i\notin Y}\al_i < n-1$. Then, $n-1\geqslant 1$ and
$$\sum_{i\notin X}\al_i \leqslant  (n-1)-1 \text{ and } \sum_{i\notin Y}\al_i \leqslant (n-1)-1.$$
If $F$ is a facet of $\Delta(G)$ that is different from $X$ and $Y$, then $F\notin \mathcal F(\Delta_{\alb}(I(G)))$, and then $\sum_{i\notin F} \al_i\geqslant n  > n-1$ according to Lemma $\ref{L02}$. From these equations and Lemma $\ref{L02}$, we get $\Delta_{\alb}(I(G)^{n-1}) = \left<X, Y\right>$. In particular, $\Delta_{\alb}(I(G)^{n-1})$ is disconnected, so  $\depth R/I(G)^{n-1}=1$. This contradicts to the minimality of $n$. Thus, we may assume that $\sum_{i\notin Y}\al_i = n-1$.

Assume now that $\sum_{i\notin X}\al_i < n-1$. Since $$\sum_{i\in X}\al_i =\sum_{i\notin Y}\al_i = n-1 \geqslant 1,$$
$\al_i \geqslant 1$ for some $i\in X$. We may assume that $i=1$. Let $\btb = (\al_1-1,\al_2,\ldots,\al_r)$, so that $\btb\in\N^r$ as $\al_1\geqslant 1$. By the same way as in the previous paragraph we get $\Delta_{\btb}(I(G)^{n-1})=\left<X,Y\right>$, which yields $\depth R/I(G)^{n-1}=1$. This also contradicts to the minimality of $n$. Hence,
$$\sum_{i\notin X} \al_i = \sum_{i \notin Y} \al_i = n-1,$$
as required.
\end{proof}

We now give an explicit solution of the equation $\Delta_{\alb}(I(G)^n)=\left<X,Y\right>$. This solution turns out to be optimal for studying $\dstab(I(G))$.

\begin{defn}Let $G$ be a graph. We define:
\begin{enumerate}
\item For each $i \in V(G)$, denote $\mu_G(i)$ to be the number of non-leaf edges of $G$ that are incident with $i$,
\item $\mu(G) := (\mu_G(1),\ldots, \mu_G(r)) \in \N^r$.
\end{enumerate}
\end{defn}

\begin{lem}\label{L06} Let $G$ be a connected bipartite graph with bipartition $(X, Y)$. Let $\alb := \mu(G)$ and $n := \varepsilon(G) -\varepsilon_0(G) + 1$. Then,
$$\Delta_{\alb}(I(G)^n) = \left<X,Y\right>, \ \text{ and } \sum_{i\notin X}\al_i = \sum_{i\notin Y}\al_i = \varepsilon(G) -\varepsilon_0(G).$$
\end{lem}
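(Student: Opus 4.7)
The plan is to invoke Lemma~\ref{L02} and show that among the facets of $\Delta(G)$, only $X$ and $Y$ survive the Takayama inequality determined by $\alb$ and $n$. Because $(X,Y)$ is a bipartition of $G$, every non-leaf edge of $G$ has exactly one endpoint in $X$ and exactly one in $Y$; summing $\mu_G(i)$ over $i\in Y$ therefore counts each non-leaf edge once at its $Y$-endpoint and gives $\sum_{i\in Y}\mu_G(i)=\varepsilon(G)-\varepsilon_0(G)$, and symmetrically for $X$. This yields both advertised identities $\sum_{i\notin X}\al_i=\sum_{i\notin Y}\al_i=n-1$, and, via Lemma~\ref{L02}, shows that $X,Y\in\Delta_{\alb}(I(G)^n)$.

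Now let $F\in\mathcal F(\Delta(G))$ with $F\neq X,Y$ and set $C:=V(G)\setminus F$, a minimal vertex cover of $G$ distinct from $X$ and $Y$. A double count gives
$$\sum_{i\notin F}\al_i=\sum_{i\in C}\mu_G(i)=\sum_{e\text{ non-leaf}}|e\cap C|=(\varepsilon(G)-\varepsilon_0(G))+|\{e\text{ non-leaf}:e\subseteq C\}|,$$
since $|e\cap C|\geq 1$ for every edge $e$. So it suffices to exhibit a single non-leaf edge inside $C$, for then $\sum_{i\notin F}\al_i\geq n$ and $F$ is excluded from $\Delta_{\alb}(I(G)^n)$ by Lemma~\ref{L02}. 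I would produce such an edge in two sub-steps. First, $C$ must contain some edge of $G$: otherwise both $C$ and $F=V(G)\setminus C$ are independent, making $(C,F)$ a bipartition of the connected graph $G$, and the uniqueness of the bipartition of a connected bipartite graph would force $\{C,F\}=\{X,Y\}$, contradicting $C\neq X,Y$. Second, every edge $\{u,v\}\subseteq C$ is automatically non-leaf: if $v$ were a leaf, its only incident edge $\{u,v\}$ would already be covered by $u\in C$, so $C\setminus\{v\}$ would still be a vertex cover, contradicting the minimality of $C$.

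The delicate part is this final combinatorial argument, which simultaneously leverages the uniqueness of the bipartition of a connected bipartite graph and the minimality of $C$. A boundary case worth noting is when $G$ has no non-leaf edge at all (i.e., $G=K_2$ or a star $K_{1,m}$): then $\alb=\zv$, $n=1$, $\mathcal F(\Delta(G))=\{X,Y\}$, and the whole statement reduces immediately to the first paragraph's computation.
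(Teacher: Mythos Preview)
Your argument is correct, and it follows a genuinely different route from the paper's.

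For the identity $\sum_{i\notin X}\al_i=\sum_{i\notin Y}\al_i=n-1$, the paper writes $\sum_{i\in X}\mu_G(i)=\sum_{i\in X}\deg i-|S|-|T|$ with $S,T$ the leaf sets in $X,Y$, whereas you observe directly that summing $\mu_G$ over one color class counts each non-leaf edge exactly once; this is shorter and more transparent. For the exclusion of a facet $F\neq X,Y$, the paper decomposes $F=U\cup V$ with $U\subsetneq X$, $V\subsetneq Y$ and carries out a multi-line calculation that ends with $\sum_{u\notin F}\mu_G(u)=(n-1)+|P|$, where $P$ is the set of edges with both endpoints in the complement $C=V(G)\setminus F$; it then shows $P\neq\emptyset$ by a shortest-path argument between $U$ and $V$. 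You instead recast $C$ as a minimal vertex cover and use the double count $\sum_{i\in C}\mu_G(i)=\sum_{e\text{ non-leaf}}|e\cap C|=(n-1)+|\{e\text{ non-leaf}:e\subseteq C\}|$; you then produce an edge inside $C$ by appealing to the uniqueness of the bipartition of a connected bipartite graph, and show it must be non-leaf via minimality of $C$. The two computations in fact agree, since a minimal vertex cover can never contain both ends of a leaf edge (your minimality argument), so the paper's $|P|$ equals your non-leaf count. Your approach trades the explicit bookkeeping and the path argument for two clean structural facts (uniqueness of bipartition, minimality of covers); the paper's approach is more hands-on but entirely self-contained.
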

\begin{proof}Clearly, $X$ and $Y$ are facets of $\Delta(G)$. If $\upsilon(G) = 2$, i.e., $G$ is exactly an edge $\{1,2\}$, then $n = 1$ and $\alb = (0, 0)$. We may assume that $X = \{1\}$ and $Y = \{2\}$. Then, $\Delta_{\alb}(I(G)^n) = \Delta(I(G))=\Delta(G)=\left<\{1\},\{2\}\right>$, so the lemma holds for this case.

Assume that $\upsilon(G) \geqslant 3$. Let $S := \{i \in X \ | \ \deg i = 1\}$ and $T := \{j \in Y \ | \ \deg j = 1\}$, so that
\begin{equation}\label{EQ06} |S| + |T| = \varepsilon_0(G).\end{equation}
From \cite[Theorem $1.1$ and Exercise $1.1.9$]{BM}  we have
\begin{equation}\label{EQ07}\sum_{i\in X}\deg i = \sum_{j\in Y}\deg j = \varepsilon(G).\end{equation}
Note that the unique neighbor of each leaf of $G$ in $X$ is a non-leaf of $G$ in $Y$. Together with Formulas ($\ref{EQ06}$)-($\ref{EQ07}$), this fact gives
$$\sum_{i\in X}\mu_G(i) = \sum_{i\in X}\deg i - |S| - |T| = \varepsilon(G)-\varepsilon_0(G) = n-1.$$
Similarly,
$$\sum_{j\in Y}\mu_G(j) = \sum_{j\in Y}\deg j - |S| - |T| = \varepsilon(G)-\varepsilon_0(G) = n-1.$$
Hence, $X,Y \in \mathcal F(\Delta_{\alb}(I(G)^n))$ by Lemma $\ref{L02}$. So in order to prove the lemma it remains to prove that $\Delta_{\alb}(I(G)^n) = \left<X,Y\right>$, or equivalently, if $F\in \mathcal F(\Delta(G)) \setminus \{X,Y\}$ then $F\notin F(\Delta_{\alb}(I(G)^n))$.

Indeed, by the maximality of $F$, we can partition $F$ into $F = U \cup V$, where $U$ and $V$ are nonempty proper subsets of $X$ and $Y$, respectively, such that every vertex in $X \setminus U$ (resp. in $Y\setminus V$) is adjacent to at least one vertex in $V$ (resp. in $U$), and no vertex in $U$ is adjacent to a vertex in $V$. Then, we have
$$\sum_{i\in X\setminus U} \mu_G(i) = \sum_{i\in X\setminus U} \deg i - |S \cap (X\setminus U)| - |T\cap V|,$$
$$\sum_{j\in Y\setminus V} \mu_G(j) = \sum_{j\in Y\setminus V} \deg j - |T \cap (Y\setminus V)| - |S\cap U|,$$
and
$$\sum_{j\in Y\setminus V} \deg j = \sum_{i\in U} \deg i +\sum_{j\in Y\setminus V} |N_G(j) \cap (X\setminus U)|.$$
Combining these Equations with Formulas ($\ref{EQ06}$)-($\ref{EQ07}$) we obtain
\begin{align*}
&\sum_{u\notin F} \mu_G(u) = \sum_{i\in X\setminus U}\mu_G(i) +\sum_{j\in Y\setminus V} \mu_G(j)\\
&=\sum_{i\in X\setminus U} \deg i - |S\cap (X\setminus U)| - |T\cap V| +\sum_{j\in Y \setminus V} \deg j - |T\cap(Y\setminus V)| - |S\cap U|\\
&=\sum_{i\in X\setminus U} \deg i +\sum_{j\in Y \setminus V} \deg j - (|S\cap U|+|S\cap (X\setminus U)| + |T\cap V|  + |T\cap(Y\setminus V)|)\\
&=\sum_{i\in X\setminus U} \deg i +\sum_{j\in Y \setminus V} \deg j - (|S|+|T|)\\
&=\sum_{i\in X\setminus U} \deg i +\sum_{i\in U} \deg i + \sum_{j\in Y\setminus V} |N_G(j) \cap (X\setminus U)| - \varepsilon_0(G)\\
&=\sum_{i\in X} \deg i - \varepsilon_0(G) + \sum_{j\in Y\setminus V} |N_G(j) \cap (X\setminus U)| \\
&=\varepsilon(G) - \varepsilon_0(G) + \sum_{j\in Y\setminus V} |N_G(j) \cap (X\setminus U)|,
\end{align*}
or equivalently,
$$\sum_{u\notin F} \mu_G(u) = \varepsilon(G) - \varepsilon_0(G) + |P| = n -1 + |P|,$$
where $P = \{(a,b)\ | \ a \in X \setminus U, b\in Y\setminus V \ \text{ and } ab \in E(G)\}$. Therefore, by Lemma $\ref{L02}$ we have $F \notin \Delta_{\alb}(I(G)^n)$ whenever $|P| \geqslant 1$, i.e., $P \ne \emptyset$.

In order to prove $P \ne\emptyset$, let  $\ell := \min \{d_G(i, j) \ | \ i \in U \ \text{ and } j \in V\}$. Then, $\ell$ is finite because $G$ is connected. Let $a \in U$ and $b \in V$ such that there is a path of length $\ell$ connects $a$ and $b$. Suppose
$$a = a_1,b_1,a_2,b_2, \ldots, a_s, b_s = b$$
is such a path, where $a_1,\ldots, a_s \in X$ and $b_1,\ldots, b_s \in Y$ . Then, $b_1 \in Y\setminus V$ because $a_1 = a \in U$. Now if $a_2 \in U$, then we would have the path $a_2, b_2, \ldots, a_s, b_s = b$ that connects $a_2 \in U$ and $b \in V$ of length $\ell-2$. This contradicts to the minimality of $\ell$. Thus, $a_2 \in X \setminus U$. This implies $(a_2, b_1) \in P$, so $P \ne \emptyset$, as required.
\end{proof}

Let $G$ be a graph and $C$ be a cycle of $G$. For any vertex $v$ of $G$, we define the distance from $v$ to $C$ to be:
$$d_G(v,C) = \{d_G(v,u)\mid u\in V(C)\}.$$

\begin{prop}\label{L07} Let $G$ be a connected bipartite graph and let $2k$ be the maximum length of cycle of $G$ ($k := 1$ if $G$ is a tree). Then, $\dstab(I(G))\leqslant \upsilon(G) -\varepsilon_0(G)-k+1$.
\end{prop}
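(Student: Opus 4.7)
The plan: by Lemma \ref{C1} combined with Lemma \ref{LT1}, the task reduces to producing a vector $\alb\in\N^r$ and an integer $n\leqslant \upsilon(G)-\varepsilon_0(G)-k+1$ with $\Delta_{\alb}(I(G)^n)=\seq{X,Y}$. If $G$ is a tree then $k=1$ and this inequality is exactly Lemma \ref{L06}, so I may assume $G$ contains a cycle. Fix $C$, a cycle of $G$ of maximum length $2k$, and form a spanning subgraph $H\subseteq G$ that is connected and unicyclic with $C$ as its unique cycle: iteratively remove from $G$ a single edge lying on some cycle other than $C$; each such removal keeps the graph connected (the edge lies on a cycle) and drops the first Betti number by one, so the process terminates at a unicyclic graph still containing $C$. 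Every leaf of $G$ remains a leaf of $H$, hence $\varepsilon_0(H)\geqslant\varepsilon_0(G)$.

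I then set
\[
\alpha_v := \mu_H(v)-[v\in V(C)], \qquad n := \upsilon(G)-\varepsilon_0(H)-k+1;
\]
the vector $\alb$ lies in $\N^r$ because every cycle vertex has $\mu_H(v)\geqslant 2$ (both of its cycle-edges are non-leaf). Using the identity $\sum_{v\in X}\mu_H(v)=\sum_{v\in Y}\mu_H(v)=\varepsilon(H)-\varepsilon_0(H)$ from the proof of Lemma \ref{L06}, together with $\varepsilon(H)=\upsilon(G)$, I find $\sum_{v\in X}\alpha_v=\sum_{v\in Y}\alpha_v=n-1$, which by Lemma \ref{L02} places both $X$ and $Y$ inside $\Delta_{\alb}(I(G)^n)$. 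Because $I(H)\subseteq I(G)$ forces $\Delta_{\alb}(I(G)^n)\subseteq\Delta_{\alb}(I(H)^n)$, it will suffice to prove $\Delta_{\alb}(I(H)^n)=\seq{X,Y}$.

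That reduction is where I expect the main obstacle. I must show that every facet $F'=U'\cup V'$ of $\Delta(H)$ other than $X,Y$ satisfies $\sum_{v\notin F'}\alpha_v\geqslant n$. The proof of Lemma \ref{L06}, applied to the connected bipartite graph $H$, supplies the identity
\[
\sum_{v\notin F'}\mu_H(v) \;=\; n_H-1+|P_H(F')|, \qquad n_H:=\varepsilon(H)-\varepsilon_0(H)+1,
\]
where $P_H(F')=\{(a,b) : a\in X\setminus U',\, b\in Y\setminus V',\, ab\in E(H)\}$. Since $\sum_{v\notin F'}\alpha_v=\sum_{v\notin F'}\mu_H(v)-|V(C)\setminus F'|$, what I really need is the sharpened estimate
\[
|P_H(F')| \;\geqslant\; |V(C)\setminus F'|-k+1,
\]
a strengthening of the bound $|P_H(F')|\geqslant 1$ proved in Lemma \ref{L06}, and this strengthening is precisely what the subtraction of $[v\in V(C)]$ in the definition of $\alb$ is designed to absorb.

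To establish this estimate I set $m:=k-|F'\cap V(C)|$. Because $F'\cap V(C)$ is independent in the cycle $C$ of length $2k$, whose independence number is $k$, we have $m\geqslant 0$ and $|V(C)\setminus F'|=k+m$. Of the $2k$ edges of $C$, exactly $2(k-m)$ are incident to $F'\cap V(C)$ (the two cycle-neighbours of each vertex in $F'\cap V(C)$), so the remaining $2m$ cycle-edges have both endpoints in $V(C)\setminus F'$ and all belong to $P_H(F')$. For $m\geqslant 1$ this gives $|P_H(F')|\geqslant 2m\geqslant m+1$, while for $m=0$ the connectedness argument at the end of the proof of Lemma \ref{L06} supplies $|P_H(F')|\geqslant 1=m+1$. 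Substituting back yields $\sum_{v\notin F'}\alpha_v\geqslant n_H-k=n$, so no facet besides $X,Y$ can enter $\Delta_{\alb}(I(H)^n)$; the chain of inclusions therefore closes to $\Delta_{\alb}(I(G)^n)=\seq{X,Y}$, and Lemma \ref{LT1} gives $\dstab(I(G))\leqslant n\leqslant \upsilon(G)-\varepsilon_0(G)-k+1$, the final inequality using $\varepsilon_0(H)\geqslant\varepsilon_0(G)$.
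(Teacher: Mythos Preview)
Your proof is correct, and the overall framework---the reduction to a unicyclic spanning subgraph $H$, the choice $\alpha_v=\mu_H(v)-[v\in V(C)]$, the value $n=\upsilon(G)-\varepsilon_0(H)-k+1$, and the passage from $\Delta_{\alb}(I(H)^n)$ to $\Delta_{\alb}(I(G)^n)$---matches the paper exactly. The difference lies in how the central claim $\Delta_{\alb}(I(H)^n)=\seq{X,Y}$ is established. The paper proves this by induction on $\upsilon(H)$: the base case $H=C_{2k}$ is handled directly, and the inductive step strips a leaf at maximal distance from $C$, with a fairly delicate case analysis tracking how $\varepsilon_0$ and the components of $\btb$ change. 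You instead give a direct counting argument: you reuse the identity $\sum_{v\notin F'}\mu_H(v)=n_H-1+|P_H(F')|$ from the proof of Lemma~\ref{L06} and supply the sharpened bound $|P_H(F')|\geqslant |V(C)\setminus F'|-k+1$ via the observation that if $|F'\cap V(C)|=k-m$ then exactly $2m$ cycle edges have both endpoints outside $F'$. Your route is shorter and more transparent---it isolates exactly what the correction term $-[v\in V(C)]$ must absorb---whereas the paper's induction, while longer, has the virtue of being mechanically self-contained and not relying on re-entering the internals of Lemma~\ref{L06}.
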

\begin{proof} Let $(X,Y)$ be a bipartition of $G$.

If $G$ is a tree, then $\varepsilon(G)=\upsilon(G)-1$ by \cite[Theorem $4.3$]{BM}. Let $\alb :=\mu(G)$ and $ n := \varepsilon(G) -\varepsilon_0(G) + 1$. Then, $\Delta_{\alb}(I(G)^n) = \left<X,Y\right>$ by Lemma $\ref{L06}$. Hence, by Lemma $\ref{LT1}$, we have $$\dstab(I(G))\leqslant n=\varepsilon(G) -\varepsilon_0(G) + 1 = \upsilon(G)-\varepsilon_0(G),$$ and the proposition follows.

Assume that $G$ has a cycle, say $C_{2k}$,  of length $2k$ where $k\geqslant 2$. If $C$ is another cycle of $G$, then $C$ has an edge $e$ not lying in the cycle $C_{2k}$. Delete this edge from $G$, thereby obtaining a connected subgraph $G'$ of $G$ with $V(G') = V(G)$ and $C_{2k}$ is a cycle of $G'$. This process continues until we obtain a connected subgraph $H$ of $G$ such that $V(G)=V(H)$ and $H$ has only one cycle, that is $C_{2k}$. Note that $H$ is also a bipartite graph with bipartition $(X,Y)$. Assume that the cycle $C_{2k}$ is: $$1,2,\ldots,2k-1,2k,1.$$
Let $n :=\upsilon(H)-\varepsilon_0(H)-k+1$ and define $\alb =(\al_1,\ldots,\al_r)\in \N^r$ by
$$\al_j :=  \left\{ \begin{array}{ll}
\mu_H(j)-1 &\mbox{if $1\leqslant j \leqslant 2k+2$},\\
\mu_H(j)  &\mbox{otherwise}.
\end{array}
\right.
$$

{\it Claim $1$}: $$\Delta_{\alb}(I(H)^n) =\left<X,Y\right> \text{ and } \sum_{i\notin X}\al_i = \sum_{i\notin Y}\al_i = n-1.$$

{\it Proof:} We will prove this claim by induction on $\upsilon(H)$. If $\upsilon(H) = 2k$, then $H=C_{2k}, r = 2k$ and $n = k+1$.  We may assume also that $X = \{1,3,\ldots,2k-1\}$ and $Y=\{2,4,\ldots,2k\}$. By noticing that $\alb=(1,1,\ldots,1)\in \N^r$, we have
$$\sum_{i\notin X}\al_i = \sum_{i\notin Y}\al_i = k = n-1,$$
and therefore $X$ and $Y$ are facets of $\Delta_{\alb}(I(H)^n)$. Hence, it remains to show that $\Delta_{\alb}(I(H)^n)=\left<X,Y\right>$. Let $F$ be a facet of $\Delta(H)$ that is different from $X$ and $Y$. Since all facets of $\Delta(C_{2k})$ have at most $k$ elements; and only $X$ and $Y$ have exactly $k$ elements, we must have $|F| < k$. Hence,
$$\sum_{i\notin F}\al_i \geqslant k + 1 = n,$$
and hence $F\notin\Delta_{\alb}(I(H)^n)$. Therefore, $\Delta_{\alb}(I(H)^n)=\left<X,Y\right>$, and the claim follows.

Assume that $\upsilon(H) > 2k$. Clearly, $r$ is not in $C_{2k}$, so we may assume that $d_G(r,C_{2k})\geqslant d_G(v,C_{2v})$ for any vertex $v$ of $G$. Then, $r$ is a leaf by Remark $\ref{rem_path}$. Let $t$ be the unique neighbor of $r$ in $G$.

Let $T := H \setminus \{r\}$. Then, $T$ is also a connected bipartite graph with only cycle $C_{2k}$ and $\upsilon(T)=\upsilon(H)-1$. We may assume that $r\in X$, so that $(X\setminus\{r\},Y)$ is a bipartition of $T$. Let $s:=\upsilon(T)-\varepsilon_0(T)-k+1$ and define $\btb =(\bt_1,\ldots,\bt_{r-1})\in \N^{r-1}$ by
$$\bt_j :=  \left\{ \begin{array}{ll}
\mu_T(j)-1 &\mbox{if $1\leqslant j \leqslant 2k$},\\
\mu_T(j)  &\mbox{otherwise}.
\end{array}
\right.
$$
We now distinguish two cases:

{\it Case $1$:} $d_G(r, C_{2k})=1$. In this case $V(G)\setminus V(C_{2k})$ is the set of all leaves of $G$. Thus, $\btb = (\al_1,\ldots, \al_{r-1})$ and $\varepsilon_0(T) = \varepsilon_0(H)-1$, and thus $s = n$.

Since $\upsilon(T)=\upsilon(H)-1$ and $\alpha_r = 0$, by the induction hypothesis we have $\Delta_{\btb}(I(T)^{n}) =\left<X\setminus\{r\},Y\right>$, and
\begin{equation}\label{V2}\sum_{i\notin X} \al_i =\sum_{i\notin X\setminus\{r\}} \bt_i = n-1, \text{ and } \sum_{i\notin Y} \al_i =\sum_{i\notin Y} \bt_i +\al_r = n-1. \end{equation}
In particular, $X\in\Delta_{\alb}(I(H)^n)$ and $Y\in\Delta_{\alb}(I(H)^n)$. Thus it remains to show that $\Delta_{\alb}(I(H)^n) =\left<X,Y\right>$. Let $F$ be any facet of $\Delta(H)$ that is different from $X$ and $Y$.

Assume that $t\in F$. Then, $F$ is also a facet of $\Delta(T)$ that is different from $X\setminus\{r\}$ and $Y$. Therefore, 
$$\sum_{i\notin F}\al_i = \sum_{i\notin F}\bt_i +\al_r = \sum_{i\notin F}\bt_i \geqslant  n.$$
Therefore, $F\notin \Delta_{\alb}(I(H)^n)$.

Assume that $t\notin F$. Then, $r\in F$ and  $F\setminus\{r\}$ is a subset of neither $X\setminus\{r\}$ nor $Y$. Since $F\setminus \{r\}\in \Delta(T)$, there is a facet $F'$ of $\Delta(T)$ containing $F$ and being different from $X\setminus\{r\}$ and $Y$. Therefore,
$$\sum_{i\notin F}\al_i = \sum_{i\notin F\setminus\{r\}}\bt_i \geqslant \sum_{i\notin F'}\bt_i \geqslant n.$$
Which implies $F\notin \Delta_{\alb}(I(H)^n)$. The claim holds for this case.

{\it Case $2$:} $d_G(r,C_{2k})\geqslant 2$. By Remark $\ref{rem_path}$ we can assume that $N_G(t) = \{t-1,t+1,\ldots,r\}$ where $t-1$ is a non-leaf and $t+1,\ldots,r$ are leaves. We now distinguish two subcases:

{\it Case $2a$:} $t + 1 = r$. Then, $\varepsilon_0(T) = \varepsilon_0(H)$ and $s=n-1$. Since $\upsilon(T)=\upsilon(H)-1, \al_r = 0$ and
$$\bt_j =  \left\{ \begin{array}{ll}
\al_j-1 &\mbox{if $j=t-1$ or $j = t$},\\
\al_j  &\mbox{otherwise},
\end{array}
\right.
$$
by the induction hypothesis we have $\Delta_{\btb}(I(T)^{n-1}) =\left<X\setminus\{r\},Y\right>$, and
\begin{equation}\label{V2}\sum_{i\notin X} \al_i =\sum_{i\notin X\setminus\{r\}} \bt_i+1 = n-1, \text{ and } \sum_{i\notin Y} \al_i =\sum_{i\notin Y} \bt_i +\al_r+1 = n-1. \end{equation}
In particular, $X\in\Delta_{\alb}(I(H)^n)$ and $Y\in\Delta_{\alb}(I(H)^n)$. Thus it remains to show that $\Delta_{\alb}(I(H)^n) =\left<X,Y\right>$. Let $F$ be any facet of $\Delta(H)$ that is different from $X$ and $Y$.

Assume that $t\in F$. Then, $F$ is also a facet of $\Delta(T)$ that is different from $X\setminus\{r\}$ and $Y$. Since $t-1\notin F$ and $\al_{t-1}=\bt_{t-1}+1$, we have
$$\sum_{i\notin F}\al_i = \sum_{i\notin F}\bt_i +1+\al_r = \sum_{i\notin F}\bt_i+1 \geqslant s+1 = n.$$
Therefore, $F\notin \Delta_{\alb}(I(H)^n)$.

Assume that $t\notin F$. Then, $r\in F$. If $t-1\in F$, then
$F\setminus\{r\}$ is a subset of neither $X\setminus\{r\}$ nor $Y$. Hence, there is a facet $F'$ of $\Delta(T)$ containing $F$ and being different from $X\setminus\{r\}$ and $Y$. Therefore,
$$\sum_{i\notin F}\al_i \geqslant \sum_{i\notin F}\bt_i+1 \geqslant \sum_{i\notin F'}\bt_i+1 \geqslant s+1=n.$$
Which implies $F\notin \Delta_{\alb}(I(H)^n)$.

If $t-1\notin F$, then $(F\cup\{t\})\setminus\{r\}$ is a facet of $\Delta(T)$. Noticing that $\al_{t-1}=\bt_{t-1}+1$ and $\al_t=1$, we get
$$\sum_{i\notin F}\al_i = \sum_{i\notin (F\cup\{t\})\setminus\{r\}}\bt_i +1+\al_t\geqslant  (s-1)+2=n.$$
Which again implies $F\notin \Delta_{\alb}(I(H)^n)$.

{\it Case $2$}: $t+1< r$. Thus $\btb = (\al_1,\ldots, \al_{r-1})$, and thus $s = n$. Now we can proceed as in Case $1$. This completes the proof of Claim $1$.

{\it Claim $2$}: $\Delta_{\alb}(I(G)^n) =\left<X,Y\right>.$

{\it Proof:} by Claim $1$ and Lemma $\ref{L02}$, $X$ and $Y$ are facets of $\Delta_{\alb}(I(G)^n)$. It remains to show that for any facet $F$ of $\Delta(G)$ being different from $X$ and $Y$, then $F\notin\Delta_{\alb}(I(G)^n)$. Since $F$ is a face of $H$, we have $F\subseteq F'$ for some facet $F'$ of $\Delta(H)$. Then, $F'$ is different from $X$ and $Y$, and then $F' \notin \Delta_{\alb}(I(H)^n)$. Thus, by Lemma $\ref{L02}$ we have
$$\sum_{i\notin F} \al_i \geqslant \sum_{i\notin F'} \al_i \geqslant n$$
and thus $F \not \in \Delta_{\alb}(I(G)^n)$, as claimed.

Now we return to the proof of the proposition. Claim $2$ and Lemma $\ref{LT1}$ give $\dstab(I(G))\leqslant n$, or equivalently
$$\dstab(I(G)) \leqslant  \varepsilon(H)-\varepsilon_0(H)-k+1.$$
Let $e$ be an edge of the cycle $C_{2k}$. Then, $H\setminus e$ is a tree. Hence, by  \cite[Theorem $4.3$]{BM} we have
$\varepsilon(H) = \varepsilon(H\backslash e) +1 = (\upsilon(H\backslash e)-1)+1 = \upsilon(H\backslash e)=\upsilon(H) = \upsilon(G)$. Clearly, $\varepsilon_0(G) \leqslant \varepsilon_0(H)$. Therefore, $$\dstab(I(G)) \leqslant \varepsilon(H)-\varepsilon_0(H)-k+1 \leqslant \upsilon(G)-\varepsilon_0(G)-k+1,$$ as required.
\end{proof}

\section{Depths of powers of edge ideals}

In this section we study the stability of $\depth R/I(G)^n$ for any graph $G$.  First we need some basic facts of homological modules of simplicial complexes. 

A tool which will be of much use is the Mayer-Vietoris sequence, see \cite[Theorem $25.1$]{MU} or \cite[in Page $21$]{ST} page $21$. For two simplicial complexes $\Delta_1$ and $\Delta_2$, we have the long exact sequence of reduced homology modules
$$ \cdots \rightarrow \h_i(\Delta_1) \oplus \h_i(\Delta_2)  \rightarrow \h_i(\Delta)\rightarrow \h_{i-1}(\Delta_1 \cap \Delta_2)\rightarrow  \h_{i-1}(\Delta_1) \oplus \h_{i-1}(\Delta_2) \rightarrow \cdots
$$
where $\Delta = \Delta_1 \cup \Delta_2$.

A simplicial complex $\Delta$ is a cone if there is a vertex $v$ such that $\{v\} \cup F \in \Delta$ for every $F\in \Delta$. If $\Delta$ is a cone, then it is acylic (see \cite[Theorem $8.2$]{MU}), i.e., 
$$\h_i(\Delta; K) = \zv \text{ for every } i\in \Z.$$

Finally, for two simplicial complexes $\Delta$ and $\Gamma$ over two disjoint vertex sets, the join of $\Delta$ and $\Gamma$, denoted by $\Delta * \Gamma$, is defined by 
$$\Delta * \Gamma := \{F \cup G \mid F\in \Delta \text{ and  } G\in\Gamma\}.$$

\begin{lem}\label{L08} Let $G$ be a bipartite graph with connected components $G_1,\ldots,G_s$ and let $n := \sum_{i=1}^s \dstab(I(G_i))-s+1$. Then there is $\alb = (\alb_1,\ldots,\alb_r) \in \N^r$ such that
$$\sum_{i\notin F}\al_i = n-1 \ \text{ for all } F \in\mathcal F(\Delta_{\alb}(I(G)^n))  \ \text{ and } \widetilde{H}_{s-1}(\Delta_{\alb}(I(G)^n); K) \ne \zv.$$
\end{lem}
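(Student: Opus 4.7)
My plan is to build $\alb$ component-by-component using the optimal weight vectors supplied by Lemma~\ref{LT1} on each bipartite component, and then recognize $\Delta_\alb(I(G)^n)$ as an iterated join whose top reduced homology is computed by Mayer--Vietoris.

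For every $i=1,\ldots,s$, let $(X_i,Y_i)$ denote a bipartition of $G_i$ and put $n_i:=\dstab(I(G_i))$, so that $n=\sum_{i=1}^s n_i - s +1$. Since $n_i=\dstab(I(G_i))$, Lemma~\ref{LT1} furnishes $\alb^{(i)}\in\N^{V(G_i)}$ satisfying
$$\Delta_{\alb^{(i)}}(I(G_i)^{n_i})=\langle X_i,Y_i\rangle \ \text{ and }\ \sum_{j\notin X_i}\al^{(i)}_j=\sum_{j\notin Y_i}\al^{(i)}_j=n_i-1.$$
Define $\alb\in\N^r$ by gluing: $\al_j:=\al^{(i)}_j$ whenever $j\in V(G_i)$. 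The heart of the proof is to identify $\Delta_\alb(I(G)^n)$ with the join $\Delta^\ast:=\langle X_1,Y_1\rangle*\cdots*\langle X_s,Y_s\rangle$; both conclusions of the lemma will follow easily from this identification.

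Every facet of $\Delta(G)$ decomposes uniquely as $F=F_1\cup\cdots\cup F_s$ with $F_i\in\mathcal F(\Delta(G_i))$, and under the gluing
$$\sum_{j\notin F}\al_j = \sum_{i=1}^s \sum_{j\in V(G_i)\setminus F_i}\al^{(i)}_j.$$
By the construction of $\alb^{(i)}$, each inner sum equals $n_i-1$ when $F_i\in\{X_i,Y_i\}$ and is at least $n_i$ otherwise, because in the latter case $F_i$ fails to be a facet of $\Delta_{\alb^{(i)}}(I(G_i)^{n_i})$. Writing $B:=\{i\mid F_i\notin\{X_i,Y_i\}\}$, I get
$$\sum_{j\notin F}\al_j \geqslant \sum_{i\notin B}(n_i-1)+\sum_{i\in B}n_i = \sum_{i=1}^s n_i - s + |B| = n-1+|B|,$$
with equality precisely when $B=\emptyset$. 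Lemma~\ref{L02} then identifies the facets of $\Delta_\alb(I(G)^n)$ with the join facets $F_1\cup\cdots\cup F_s$, $F_i\in\{X_i,Y_i\}$, and simultaneously yields $\sum_{j\notin F}\al_j=n-1$ for every such facet. Therefore $\Delta_\alb(I(G)^n)=\Delta^\ast$.

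To finish, I would show by induction on $s$ that $\widetilde H_{s-1}(\Delta^\ast;K)\cong K$. The base case $s=1$ is immediate since $\Delta^\ast=\langle X_1,Y_1\rangle$ is the disjoint union of two simplices. For the inductive step, set $\Delta^\ast_{s-1}:=\langle X_1,Y_1\rangle*\cdots*\langle X_{s-1},Y_{s-1}\rangle$ and decompose
$$\Delta^\ast=\bigl(\Delta^\ast_{s-1}*\langle X_s\rangle\bigr) \cup \bigl(\Delta^\ast_{s-1}*\langle Y_s\rangle\bigr).$$
Each piece is a join with a simplex, hence a cone and therefore acyclic, while $X_s\cap Y_s=\emptyset$ forces their intersection to collapse to $\Delta^\ast_{s-1}$. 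The Mayer--Vietoris sequence then gives the isomorphism $\widetilde H_{s-1}(\Delta^\ast;K)\cong \widetilde H_{s-2}(\Delta^\ast_{s-1};K)$, closing the induction.

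The main obstacle is really the sharp inequality in the third paragraph: everything rests on Lemma~\ref{LT1} delivering $\alb^{(i)}$ for which $n_i-1$ is attained exactly on the two bipartition facets and is strictly less than the weight sum on every other facet of $\Delta(G_i)$. Once this sharpness is in hand, the bookkeeping above is straightforward and the Mayer--Vietoris computation of $\widetilde H_{s-1}$ is routine.
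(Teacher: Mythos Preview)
Your proof is correct and follows essentially the same route as the paper's: both glue together the component weight vectors furnished by Lemma~\ref{LT1}, use Lemma~\ref{L02} and the facet-by-facet decomposition to identify $\Delta_{\alb}(I(G)^n)$ with the join $\langle X_1,Y_1\rangle*\cdots*\langle X_s,Y_s\rangle$, and then compute the top reduced homology of the join via Mayer--Vietoris (the paper iterates all the way down to $\widetilde H_{-1}(\{\emptyset\};K)\cong K$, while you stop the induction at $s=1$, but this is cosmetic). Your closing remark about the ``main obstacle'' is exactly right and matches what the paper relies on: Lemma~\ref{LT1} together with Lemma~\ref{L02} guarantees that for the chosen $\alb^{(i)}$ the sum $\sum_{j\in V(G_i)\setminus F_i}\al^{(i)}_j$ is exactly $n_i-1$ on $\{X_i,Y_i\}$ and at least $n_i$ on every other facet of $\Delta(G_i)$.
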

\begin{proof} For each $i$, let $(X_i,Y_i)$ be a bipartition of $G_i$ and $n_i := \dstab(I(G_i))$, so that
$$n = \sum_{i=1}^s n_s -s +1.$$

Since the vertex sets of $G_1,\ldots,G_s$ are mutually disjoint, by Lemma $\ref{LT1}$ there is $\alb =(\al_1,\ldots,\al_r)\in \N^r$ such that
\begin{equation}\label{EQ08}
\sum_{j\in V(G_i)\setminus X_i}\al_j = \sum_{j\in V(G_i)\setminus Y_i}\al_j = n_i-1,
\end{equation}
and
\begin{equation}\label{EQ09}
\sum_{j\in V(G_i)\setminus F_i}\al_j \geqslant  n_i \ \text{ for all } F_i \in \mathcal F(\Delta(G_i)) \setminus \{X_i,Y_i\}.
\end{equation}
For any $F \in\mathcal F(\Delta(G))$, we can partition $F$ into $F = \bigcup_{i=1}^s F_i$ where $F_i \in \mathcal F(\Delta(G_i))$ for $i = 1,\ldots,s$. By Equation ($\ref{EQ08}$) and Inequality ($\ref{EQ09}$) we get
$$\sum_{j\notin F}\al_j = \sum_{i=1}^s \sum_{j\in V(G_i) \setminus F_i}\al_j \geqslant \sum_{i=1}^s (n_i-1) = n -1$$
and the equality occurs if and only if
$$\sum_{j\in V(G_i)\setminus F_i}\al_j = n_i -1 \ \text{ for all } i =1,\ldots,s,$$
or equivalently, either $F_i=X_i$ or $F_i=Y_i$ for all $i = 1,\ldots, s$. Together with Lemma $\ref{L02}$ we have
$$\sum_{j\notin F}\al_j = n -1 \ \text{ for all } F \in\mathcal F(\Delta_{\alb}(I(G)^n)),$$
and
$$ \Delta_{\alb}(I(G)^n) = \left<X_1,Y_1\right> * \cdots * \left<X_s,Y_s\right>.$$
So it remains to prove that $\widetilde H_{s-1}(\left<X_1,Y_1\right> * \cdots * \left<X_s,Y_s\right>; K) \ne \zv$. In order to prove this, let $\Delta_i := \left<X_1,Y_1\right> * \cdots * \left<X_i,Y_i\right>$ for $i=1,\ldots,s$ and $\Delta_0 :=\{\emptyset\}$. Then, for all $i=1,\ldots,s$ we have
$$\Delta_{i} = \left<X_i\right> * \Delta_{i-1} \cup \left<Y_i\right> * \Delta_{i-1} \ \text{ and } \Delta_{i-1} = \left<X_i\right> * \Delta_{i-1} \cap \left<Y_i\right> * \Delta_{i-1}.$$

Since $\left<X_i\right> * \Delta_{i-1}$ and $\left<X_i\right> * \Delta_{i-1}$ are cones, by using Mayer-Vietoris sequence, we get an exact sequence $\zv \rightarrow \widetilde H_{s-1}(\Delta_s; K)  \rightarrow \widetilde H_{s-2}(\Delta_{s-1}; K)\rightarrow \zv$. Thus,
$$\widetilde H_{s-1}(\Delta_s; K) \cong \widetilde H_{s-2}(\Delta_{s-1}; K).$$
By repeating this way we obtain
$$\widetilde H_{s-1}(\Delta_s; K) \cong \widetilde H_{s-2}(\Delta_{s-1}; K) \cong \cdots\cong \widetilde H_{-1}(\Delta_0; K) \cong K,$$
and so $\widetilde H_{s-1}(\Delta_s; K) \ne \zv$, as required.
\end{proof}

The next lemma gives the limit of the sequence $\depth R/I(G)^n$.

\begin{lem} \label{LT2} Let $G$ be a graph. Assume that $G_1,\ldots,G_s$ are all connected bipartite components of $G$ and $G_{s+1},\ldots,G_{s+t}$ are all connected nonbipartite components of $G$. Then
$$\depth R/I(G)^n = s \ \text{ for all } n\geqslant \sum_{i=1}^{s+t}\dstab(I(G_i)) -(s+t)+1.$$
\end{lem}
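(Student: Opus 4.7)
The plan is to sandwich $\depth R/I(G)^n$ between $s$ on both sides. Set $H_1 := G_1\cup\cdots\cup G_s$ and $H_2 := G_{s+1}\cup\cdots\cup G_{s+t}$, with $A := K[V(H_1)]$, $B := K[V(H_2)]$, so that $I(G) = I(H_1)R + I(H_2)R$ decomposes across two disjoint variable sets. Write $n_b := \sum_{i=1}^s\dstab(I(G_i))-s+1$ and $n_{nb} := \sum_{i=s+1}^{s+t}\dstab(I(G_i))-t+1$, so that the threshold in the statement is $n_0 := n_b+n_{nb}-1$. For the lower bound, Lemma $\ref{L10}$ applied with $I(H_1)$ and $I(H_2)$ gives $\depth R/I(G)^n \geqslant \min_{1\leqslant m\leqslant n}\depth A/I(H_1)^m$, and Lemma $\ref{C1}$ identifies this minimum as $s$; hence $\depth R/I(G)^n\geqslant s$ for every $n\geqslant 1$. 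The degenerate cases $s=0$ and $t=0$ reduce immediately to Lemma $\ref{LT21}$ and to the combination of Lemma $\ref{L08}$ with Lemma $\ref{C1}$ respectively, so I work under the assumption $s,t\geqslant 1$.

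For the upper bound, for each $n\geqslant n_0$ I would exhibit $\alb^{(n)}\in\N^r$ with $\widetilde H_{s-1}(\Delta_{\alb^{(n)}}(I(G)^n);K)\neq \zv$; Lemma $\ref{L01}$ then forces $H_{\mi}^s(R/I(G)^n)\neq \zv$ and hence $\depth R/I(G)^n\leqslant s$. Choose $\alb_1$ from Lemma $\ref{L08}$ applied to $H_1$, so that $\Delta_{\alb_1}(I(H_1)^{n_b})$ is the join $\left<X_1,Y_1\right>*\cdots *\left<X_s,Y_s\right>$ with nonvanishing $(s-1)$-st reduced homology, its facets $F_1$ satisfy $\sum_{i\notin F_1}\al_{1i}=n_b-1$, and every other facet of $\Delta(H_1)$ satisfies $\sum\geqslant n_b$ (by the proof of Lemma $\ref{L08}$). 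Set $m:=n-n_b+1\geqslant n_{nb}$. By Lemma $\ref{LT21}$ the maximal ideal $\mi_{H_2}$ is an associated prime of $B/I(H_2)^m$, so in the $\N^{|V(H_2)|}$-graded decomposition some $\alb_2^{(m)}$ yields a strict socle element: $y^{\alb_2^{(m)}}\notin I(H_2)^m$ and $\mi_{H_2}\cdot y^{\alb_2^{(m)}}\subseteq I(H_2)^m$, equivalently $\Delta_{\alb_2^{(m)}}(I(H_2)^m)=\{\emptyset\}$. Put $\alb^{(n)}$ equal to the concatenation $(\alb_1,\alb_2^{(m)})$.

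The main combinatorial step is to compute $\Delta_{\alb^{(n)}}(I(G)^n)$ explicitly. Expanding $I(G)^n=\sum_{k=0}^n I(H_1)^kI(H_2)^{n-k}$ and using the disjointness of the variable sets, the monomial $x^{\alb^{(n)}}$ lies in $I(H_1)^kI(H_2)^{n-k}R_F$ precisely when $x^{\alb_1}\in I(H_1)^kR_{F\cap V(H_1)}$ and $y^{\alb_2^{(m)}}\in I(H_2)^{n-k}R_{F\cap V(H_2)}$. Defining $\phi(F_1):=\min\{j\geqslant 1\mid F_1\in \Delta_{\alb_1}(I(H_1)^j)\}$ and $\psi(F_2)$ analogously, and exploiting the monotonicity $\Delta_{(-)}(I^j)\subseteq\Delta_{(-)}(I^{j+1})$, a case analysis on $k\in\{0,\ldots,n\}$ collapses to the clean biconditional
$$F=F_1\cup F_2\in \Delta_{\alb^{(n)}}(I(G)^n)\ \Longleftrightarrow\ \phi(F_1)+\psi(F_2)\leqslant n+1.$$
Lemma $\ref{L02}$ together with the sum conditions above forces $\phi(F_1)\geqslant n_b$ for every $F_1$ (with equality exactly when $F_1\in \Delta_{\alb_1}(I(H_1)^{n_b})$). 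For the $B$-side, the strict socle property yields $y^{\alb_2^{(m)}}\in I(H_2)^{m-1}$: from any decomposition $y^{\alb_2^{(m)}}y_v=e_1\cdots e_m\cdot h$ the variable $y_v$ cannot divide $h$ (otherwise $y^{\alb_2^{(m)}}$ would itself lie in $I(H_2)^m$), so it divides some edge $e_i=y_vy_w$ and cancellation delivers $y^{\alb_2^{(m)}}=y_w\cdot\prod_{j\neq i}e_j\cdot h\in I(H_2)^{m-1}$. Hence $\psi(\emptyset)=m$, while $\psi(F_2)\geqslant m+1$ for every $F_2\neq\emptyset$ since $F_2\notin\Delta_{\alb_2^{(m)}}(I(H_2)^m)=\{\emptyset\}$.

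Substituting these inequalities into $\phi(F_1)+\psi(F_2)\leqslant n+1=n_b+m$ shows that the case $F_2\neq\emptyset$ demands $\phi(F_1)\leqslant n_b-1$ and is therefore impossible, while the case $F_2=\emptyset$ reduces exactly to $F_1\in \Delta_{\alb_1}(I(H_1)^{n_b})$. Thus $\Delta_{\alb^{(n)}}(I(G)^n)$ coincides as an abstract simplicial complex with $\Delta_{\alb_1}(I(H_1)^{n_b})$ (the vertices of $V(H_2)$ stay in the ambient vertex set but support no faces), and its $(s-1)$-st reduced homology is nonzero by Lemma $\ref{L08}$. This delivers $\depth R/I(G)^n\leqslant s$ for every $n\geqslant n_0$, and combined with the lower bound yields the desired equality. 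The most delicate step is verifying the biconditional $\phi(F_1)+\psi(F_2)\leqslant n+1$: the boundary cases $k=0$ and $k=n$ only constrain one of $F_1,F_2$, and the interior cases $1\leqslant k\leqslant n-1$ must be merged via monotonicity to isolate the pair of inequalities $k\geqslant\phi(F_1)$ or $n-k\geqslant\psi(F_2)$; once that identity is established, the remainder is bookkeeping.
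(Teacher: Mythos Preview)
Your proof is correct and follows essentially the same strategy as the paper's: split $G$ into its bipartite union $H_1$ and nonbipartite union $H_2$, get the lower bound $\depth\geqslant s$ from Lemma~\ref{L10} and Lemma~\ref{C1}, and for the upper bound concatenate the degree vector $\alb_1$ from Lemma~\ref{L08} with a socle witness $\alb_2^{(m)}$ for $\mi_{H_2}\in\ass(B/I(H_2)^m)$, then prove that $\Delta_{\alb^{(n)}}(I(G)^n)$ coincides with $\Delta_{\alb_1}(I(H_1)^{n_b})$. The paper carries out this last identification directly (its Equation~(\ref{EQ19})) by chasing the condition $\x^{\gmb}\notin I(G)^n_H$ through the binomial expansion; your reformulation via the threshold functions $\phi,\psi$ and the criterion $\phi(F_1)+\psi(F_2)\leqslant n+1$ is a tidy repackaging of exactly the same calculation, not a different method. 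One small remark: your derivation that $y^{\alb_2^{(m)}}\in I(H_2)^{m-1}$ from the socle condition is something the paper simply asserts in its Formula~(\ref{EQ17}), so you have in fact filled in a detail the paper leaves implicit.
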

\begin{proof} Let $n_i := \dstab(I(G_i))$ for $i=1,\ldots,s+t$. We divide the proof into three cases:

{\it Case} $1$. $s=0$, i.e., every component of $G$ is nonbipartite. This case follows from Lemma $\ref{LT21}$.

{\it Case} $2$. $t=0$, i.e., $G$ is bipartite. Let $m:= \sum_{i=1}^s n_i - s + 1$. By Lemmas $\ref{L01}$ and $\ref{L08}$, there is $\alb\in \N^r$ such that
$$\dim_K H_{\mi}^s(R/I(G)^m)_{\alb} = \dim_K \widetilde{H}_{s-1}(\Delta_{\alb}(I(G)^m); K) \ne 0.$$
Hence, $H_{\mi}^s(R/I(G)^m)\ne \zv$, which yields $\depth R/I(G)^m\leqslant s$. On the other hand, by Lemma $\ref{C1}$ we have $\depth R/I(G)^m\geqslant s$. Thus, $\depth R/I(G)^m= s$. The lemma now follows from Lemma $\ref{C1}$.

{\it Case} $3$. $s\ne 0$ and $t \ne 0$. Let $G'$ and $G''$ be induced subgraphs of $G$ defined by
$$G' :=\bigcup_{i=1}^s G_i \ \text{ and } G'' := \bigcup_{i=1}^t G_{s+i}.$$
We may assume that $V(G') = [p]$ and $V(G'') = \{p+1,\ldots,p+q\}$, where $p+q = r$.  For simplicity, we set $y_1:=x_{p+1},\ldots,y_q:=x_{p+q}$. Then $R = K[x_1,\ldots, x_p,y_1,\ldots,y_q]$. Let $R' := K[x_1,\ldots, x_p], R'' := K[y_1,\ldots,y_q]$,  $m := \sum_{i=1}^s n_i -s +1$ and $n_0 := n-m+1$. Note that $n_0 \geqslant \sum_{i=1}^t n_{s+i}- t + 1$, so $(y_1,\ldots, y_q) \in \ass(R''/I(G'')^{n_0} )$  by Lemma $\ref{LT21}$. Accordingly, there exists $\btb = (\beta_1,\ldots,\beta_q)\in \N^q$ such that $(y_1,\ldots, y_q) = I(G'')^{n_0} : \y^{\btb}$. This implies
\begin{equation}\label{EQ17} \y^{\btb} \in I(G'')^{n_0-1},\  \y^{\btb} \notin I(G'')^{n_0} \ \text{ and } \y^{\btb} \in I(G'')^{n_0}_F \text{ whenever } \emptyset \ne F\in\Delta(G'').\end{equation}
Next, by Lemma $\ref{L08}$ there is $\alb = (\al_1,\ldots \al_p) \in \N^p$ such that
\begin{equation}\label{EQ18}
\widetilde{H}_{s-1}(\Delta_{\alb}(I(G')^m); K) \ne \zv, \ \text{ and }\sum_{i\notin V}\al_i = m-1 \ \text{ for all } V \in\mathcal F(\Delta_{\alb}(I(G')^m)).
\end{equation}
Let $\gmb := (\al_1,\ldots,\al_p, \beta_1,\ldots,\beta_q) \in \N^r$. Note that $\x^{\gmb} = \x^{\alb} \y^{\btb} \in R$. We claim that
\begin{equation}\label{EQ19}
\Delta_{\gmb}(I(G)^n) = \Delta_{\alb}(I(G')^m).
\end{equation}
Indeed, for all $H \in \Delta_{\gmb}(I(G)^n)$ we can partition $H$ into $H = H_1 \cup H_2$ where
$H_1 \in \Delta(G')$ and $H_2 \in \Delta(G'')$. By Equation ($\ref{EQ01}$) we have\begin{equation}\label{EQ20} \x^{\gmb} = \x^{\alb}\y^{\btb} \notin I(G)^n_H = (I(G')_{H_1}+I(G'')_{H_2})^n=\sum_{i=0}^n I(G')_{H_1}^i I(G'')_{H_2}^{n-i}.\end{equation}
Now, if $H_2\ne\emptyset$, then by Formula ($\ref{EQ17}$) we would have $\y^{\btb}\in I(G'')^{n_0}_{H_2}$. Then, Formula ($\ref{EQ20}$) forces $\x^{\alb} \notin I(G')_{H_1}^{n-n_0}=I(G')_{H_1}^{m-1}$, thus $H_1 \in\Delta_{\alb}(I(G')^{m-1})$. In particular, $\Delta_{\alb}(I(G')^{m-1}) \ne \emptyset$. Let us take arbitrary facet $V$ of $\Delta_{\alb}(I(G')^{m-1})$. By Lemma $\ref{L02}$ we then have $\sum_{i\notin V}\al_i \leqslant m-2$. By Lemma $\ref{L02}$ again, $V$ is a facet of $\Delta_{\alb}(I(G')^{m})$, which contradicts $(\ref{EQ18})$. Thus, $H_2 =\emptyset$ and $H = H_1$. Formula ($\ref{EQ20}$) now becomes
$$\x^{\gmb} = \x^{\alb}\y^{\btb} \notin (I(G')_{H}+I(G''))^n = \sum_{i=0}^n I(G')^i_H I(G'')^{n-i}.$$
Together with Formula ($\ref{EQ17}$), this fact implies $\x^{\alb} \notin I(G')^{n-n_0+1}_H = I(G')^{m}_H$, or equivalently, $H\in \Delta_{\alb}(I(G')^m)$, so $\Delta_{\gmb}(I(G)^n) \subseteq \Delta_{\alb}(I(G')^m)$.

In order to prove the reverse inclusion, suppose that $H \in \Delta_{\alb}(I(G')^m)$. Then, $\x^{\alb} \notin I(G')^m_H$ by Equation ($\ref{EQ01}$). If $\x^{\gmb}\in I(G)^n_H$, then
$$\x^{\gmb} = \x^{\alb}\y^{\btb} \in I(G)^n_H = (I(G')_H+I(G''))^n =\sum_{i=0}^n I(G')_H^i I(G'')^{n-i}.$$
Hence, $\x^{\alb}\y^{\btb} \in I(G')^{\nu}_H I(G'')^{n-\nu}$ for some nonnegative integer $\nu$. Since $V(G') \cap V(G'')=\emptyset$, it yields $\x^{\alb} \in I(G')^{\nu}_H$ and $\y^{\btb}\in I(G'')^{n-\nu}$. By Formula ($\ref{EQ17}$) we deduce that $n - \nu \leqslant n_0-1$, and so $\nu \geqslant n - n_0 + 1 = m$. But then $\x^{\alb}\in I(G')^m_H$, a contradiction. Hence, $\x^{\gmb}\notin  I(G)^n_H$, i.e., $H\in\Delta_{\gmb}(I(G)^n)$, and hence $\Delta_{\alb}(I(G')^m) \subseteq \Delta_{\gmb}(I(G)^n)$, as claimed.

Combining Formulas ($\ref{EQ18}$) and ($\ref{EQ19}$) with Lemma $\ref{L01}$, we get
$$\dim_K H_{\mi}^s(R/I(G)^n)_{\gmb} = \dim_K \widetilde H_{s-1}(\Delta_{\gmb}(I(G)^n); K) = \dim_K \widetilde H_{s-1}(\Delta_{\alb}(I(G')^m); K)\ne 0.$$
Therefore, $H_{\mi}^s(R/I(G)^n)\ne\zv$, so
\begin{equation}\label{EQ21}\depth R/I(G)^n \leqslant s.\end{equation}
On the other hand, since $G'$ is bipartite, by Lemmas $\ref{C1}$ and $\ref{L10}$ we get
$$\depth R/I(G)^n =\depth R/(I(G')+I(G''))^n \geqslant \min_{\nu\geqslant 1} \depth R'/I(G')^{\nu} = s.$$
Together with Inequality ($\ref{EQ21}$), we obtain $\depth R/I(G)^n = s$, as required.
\end{proof}

\begin{cor} For all graphs $G$ we have $\lim_{n\rightarrow \infty} \depth R/I(G)^n = \dim R -\ell(I(G)).$
\end{cor}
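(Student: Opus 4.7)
The plan is to combine Lemma \ref{LT2} with the known analytic spread formula for edge ideals that was already invoked in the proof of Lemma \ref{C1}. By Lemma \ref{LT2} the sequence $\depth R/I(G)^n$ is eventually constant with limit equal to $s$, the number of connected bipartite components of $G$. Hence it is enough to show that
$$\ell(I(G)) = \dim R - s = r - s.$$

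First I would reduce to the connected case. If $G$ decomposes as a disjoint union $G = G_1 \sqcup \cdots \sqcup G_p$ of connected components, then $I(G) = I(G_1) + \cdots + I(G_p)$ in the polynomial ring $R$, where the $I(G_i)$ live in mutually disjoint sets of variables. A standard property of the analytic spread (for ideals in disjoint variable sets) then gives
$$\ell(I(G)) = \sum_{i=1}^p \ell(I(G_i)).$$
So the problem reduces to computing $\ell(I(G_i))$ for each connected component.

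Next I would invoke the standard fact (the exact reference \cite[p.~50]{W} used in the proof of Lemma \ref{C1}): for a connected graph $H$ on vertex set $V(H)$,
$$\ell(I(H)) = \begin{cases} |V(H)|-1 & \text{if } H \text{ is bipartite},\\ |V(H)| & \text{if } H \text{ is nonbipartite}. \end{cases}$$
Summing over the $s$ bipartite and $t$ nonbipartite components yields
$$\ell(I(G)) = \sum_{i=1}^s (|V(G_i)|-1) + \sum_{i=s+1}^{s+t} |V(G_i)| = r - s,$$
which is the desired identity. Combining with Lemma \ref{LT2} gives the corollary.

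There is essentially no obstacle: the content of the corollary is exactly to identify the limit $s$ computed in Lemma \ref{LT2} with the a priori bound $\dim R - \ell(I(G))$ from Brodmann's inequality, and this identification is immediate from the additivity of analytic spread over disjoint components together with the per-component formula cited above.
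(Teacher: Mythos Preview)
Your proof is correct and follows essentially the same approach as the paper: both combine Lemma~\ref{LT2} (the limit equals $s$) with the analytic spread identity $\ell(I(G)) = r - s$ from \cite[p.~50]{W}. The only difference is cosmetic: the paper cites \cite{W} directly for the formula $r - \ell(I(G)) = s$ for an arbitrary graph $G$, whereas you unpack this via additivity of the analytic spread over components together with the per-component values; both routes are standard and lead to the same one-line conclusion.
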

\begin{proof} Let $s$ be the number of bipartite components of $G$. Then $s = \dim R -\ell(I(G))$ (see \cite[Page $50$]{W}), so the corollary immediately follows from Lemma $\ref{LT2}$.
\end{proof}

We are now ready to prove the first main result of the paper.

\begin{thm} \label{MP} Let $G$ be a graph with $p$ connected components $G_1,\ldots, G_p$. Let $s$ be the number of connected bipartite components of $G$. Then
\begin{enumerate}
\item $\min\{\depth R/I(G)^n\mid n\geqslant 1\} = s$.
\smallskip
\item $\dstab(I(G)) = \min\{n\geqslant 1 \mid \depth R/I(G)^n = s\}.$
\smallskip
\item $\dstab(I(G)) = \sum_{i=1}^p \dstab(I(G_i))-p+1$.
\end{enumerate}
\end{thm}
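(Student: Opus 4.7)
The plan is to establish part (1) directly from the prior lemmas, and then prove parts (2) and (3) simultaneously by induction on $p$. For (1), let $G'$ and $G''$ denote the unions of the bipartite and nonbipartite connected components of $G$, respectively, so that $I(G) = I(G') + I(G'')$ is a sum of monomial ideals in disjoint sets of variables. Since $G'$ is bipartite with $s$ components, Lemma~\ref{C1}(1) gives $\depth R'/I(G')^m \geqslant s$ for every $m \geqslant 1$; Lemma~\ref{L10} then yields $\depth R/I(G)^n \geqslant s$ for all $n$. The reverse inequality is immediate from Lemma~\ref{LT2}, so $\min\{\depth R/I(G)^n \mid n\geqslant 1\} = s$.

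For (2) and (3), the base case $p = 1$ is exactly Lemma~\ref{C1} (if $G$ is bipartite) or Lemma~\ref{LT21} (if $G$ is nonbipartite); in either case one also reads off that $\depth R/I(G)^n > s$ for every $n < \dstab(I(G))$, which will be part of the inductive hypothesis. For the inductive step $p \geqslant 2$, decompose $G = G_1 \cup G^*$ where $G^*$ has components $G_2, \ldots, G_p$, and let $s^*$ be the number of bipartite components of $G^*$. By the induction hypothesis, $\dstab(I(G^*)) = n^* := \sum_{i=2}^p \dstab(I(G_i)) - p + 2$ and $\depth R^*/I(G^*)^b > s^*$ for every $b < n^*$. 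Set $M := \dstab(I(G_1)) + n^* - 1 = \sum_{i=1}^p \dstab(I(G_i)) - p + 1$. Since Lemma~\ref{LT2} already yields $\depth R/I(G)^n = s$ for $n \geqslant M$, it suffices to show that $\depth R/I(G)^n \geqslant s+1$ whenever $n < M$; this will simultaneously give $\dstab(I(G)) = M$ and identify this $M$ with $\min\{n \mid \depth R/I(G)^n = s\}$.

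For this lower bound I sharpen the recursion from the proof of Lemma~\ref{L10}. Writing $I := I(G_1)$, $J := I(G^*)$, and $W_i := I^i J^{n-i} + \cdots + I^n$, iterating
$$\depth R/W_i \geqslant \min\bigl\{\depth R/I^{i+1}J^{n-i} - 1,\ \depth R/I^i J^{n-i},\ \depth R/W_{i+1}\bigr\}$$
from $i=n$ down to $i=0$, and substituting \cite[Lemma~$2.2$]{HT1} in the form $\depth R/I^a J^b = \depth R_1/I(G_1)^a + \depth R^*/I(G^*)^b + 1$ for $a,b \geqslant 1$, yields
$$\depth R/I(G)^n \geqslant \min_{\substack{a+b=n+1 \\ a,b \geqslant 1}} \bigl\{\depth R_1/I(G_1)^a + \depth R^*/I(G^*)^b\bigr\},$$
together with the auxiliary terms $\dim R_1 + \depth R^*/I(G^*)^n$ and $\depth R_1/I(G_1)^n + \dim R^*$, both easily seen to be at least $s+1$ using the estimates $|V(G_1)| \geqslant s_1 + 1$ and $|V(G^*)| \geqslant s^*+1$ (where $s_1 \in \{0,1\}$ is the number of bipartite components of $G_1$, so $s = s_1 + s^*$). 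For $n < M$, every pair $(a,b)$ with $a+b \leqslant M$ must satisfy $a < \dstab(I(G_1))$ or $b < n^*$: in the first case, Lemma~\ref{C1}(2) or Lemma~\ref{LT21}(1) forces $\depth R_1/I(G_1)^a \geqslant s_1 + 1$, while Part~(1) applied to $G^*$ gives $\depth R^*/I(G^*)^b \geqslant s^*$; in the second case, the induction hypothesis gives $\depth R^*/I(G^*)^b \geqslant s^* + 1$, while always $\depth R_1/I(G_1)^a \geqslant s_1$. In either situation the sum is at least $s+1$, closing the induction. The main technical obstacle is exactly this refinement of Lemma~\ref{L10}: its original statement collapses one of the two summands to $\min_m \depth A/I^m$, whereas here one must retain the full dependence on $\depth B/J^b$ throughout the recursion in order to force both components of the lower bound to exceed $s$ in the critical range $n < M$.
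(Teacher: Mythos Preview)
Your proof is correct and follows essentially the same approach as the paper's: both arguments prove (1) via Lemma~\ref{C1}, Lemma~\ref{L10}, and Lemma~\ref{LT2}, and then establish (2)--(3) by induction on $p$ using the $W_i$ filtration from the proof of Lemma~\ref{L10} together with \cite[Lemma~2.2]{HT1}. The only differences are presentational: the paper first disposes of the case $s=0$ via Lemma~\ref{LT21} and then assumes $G_1$ is bipartite, arguing by contradiction that $n \leqslant n_1+n_2-2$ forces $\depth R/W_i \geqslant s+1$, whereas you treat bipartite and nonbipartite $G_1$ uniformly through the parameter $s_1$ and unroll the recursion into an explicit lower bound. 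One small omission: when unrolling the recursion you should also record the ``middle'' terms $\depth R_1/I^i + \depth R^*/J^{n-i} + 1$ for $1 \leqslant i \leqslant n-1$; these are trivially $\geqslant s_1 + s^* + 1 = s+1$ by part~(1), so they do not affect the conclusion, but they do appear in the minimum.
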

\begin{proof} We may assume that  $G_1,\ldots,G_s$ are bipartite.

$(1)$ If $s = 0$ (resp. $s=p$), then the first statement follows from Lemma $\ref{LT21}$ (resp. Lemma $\ref{C1}$). Assume that $1\leqslant s < p$. Let $G'$ be the induced subgraph of $G$ consisting of $G_1,\ldots,G_s$ and $G''$ the induced subgraph of $G$ consisting of $G_{s+1},\ldots,G_p$. Then, $I(G) = I(G')+I(G'')$. Let $R' :=K[x_i\mid i\in V(G')]$. For all $n\geqslant 1$, since $G'$ is bipartite,  by Lemmas $\ref{C1}$ and $\ref{L10}$ we have
$$\depth R/I(G)^n\geqslant \min\{\depth R'/I(G')^m \mid m\geqslant 1\} =s.$$
Together with Lemma $\ref{LT2}$ we conclude that
$$\min\{\depth R/I(G)^n \mid n\geqslant 1\} = s,$$
and $(1)$ follows.

We next prove ($2$) and ($3$) simultaneously by induction on $p$. If $p=1$, then the theorem follows from Lemmas $\ref{LT21}$ and $\ref{C1}$.

Assume that $p\geqslant 2$. If $s = 0$, our claim follows from Lemma $\ref{LT21}$. So we may assume that $s \geqslant 1$. Let $H$ be the induced subgraph of $G$ consisting of components $G_2,\ldots, G_p$. Then, $H$ has $p-1$ connected components and $s-1$ connected bipartite components.   By Lemma $\ref{LT2}$ we have
$$ \depth R/I(G)^n = s \text{ for all } n\geqslant \sum_{i=1}^p \dstab(I(G_i))-p+1.$$
Hence, in order to prove the theorem  it suffices to show that if
\begin{equation}\label{CONDITION}
\depth R/I(G)^n = s
\end{equation}
for a given positive integer $n$,  then $n\geqslant \sum_{i=1}^p \dstab(I(G_i))-p+1$.

In order to prove this assertion let $A:=K[x_j\mid j\in V(G_1)]$ and $B:=K[x_j\mid j\in V(H)]$. Then, we have $\dim A \geqslant 2$ and $\dim B\geqslant s$. For simplicity, we set $I := I(G_1)$ and $J := I(H)$. We now claim that
\begin{equation}\label{X1} \depth R/I^iJ^{n-i} \geqslant  s+1 \text{ for } i=0,\ldots,n.
\end{equation}
Indeed, if $i =n$, since $\depth A/I^n\geqslant 1$ and $\dim B\geqslant s$, we have
$$\depth R/I^nJ^0 = \depth R/I^n = \depth A/I^n +\dim B \geqslant 1 +s.$$
Since $\depth B/J^n \geqslant s-1$ by Part $1$, a similar proof also holds for $i=0$. For all $i=1,\ldots,n-1$, by \cite[Lemma $2.2$]{HT1} we have $\depth R/I^iJ^{n-i} = \depth A/I^i + \depth B/J^{n-i} +1$. Hence, $\depth R/I^iJ^{n-i} \geqslant 1+(s-1)+1 = s+1$, as claimed.

Let $n_1:=\dstab(G_1)$ and $n_2:=\dstab(H)$. We will prove that $n\geqslant n_1+n_2-1$. Assume on the contrary that $n \leqslant n_1+n_2-2$. For each $i=0,\ldots,n$, we put
$$W_i :=I^iJ^{n-i}+\cdots+I^nJ^0,$$
where $I^0 = J^0 = R$. We next claim that
\begin{equation}\label{LM}
\depth R/W_i \geqslant s+1 \text{ for all } i=0,\ldots,n.
\end{equation}
Indeed, we prove this by induction on $i$. If $i = n$, then by Inequality ($\ref{X1}$) we have
$$\depth R/W_n = \depth R/I^n \geqslant s+1.$$
Assume that $\depth R/W_{i+1} \geqslant s+1$ for some $0\leqslant i < n$. By Equations ($\ref{M1}$) and ($\ref{M2}$), we have $I^iJ^{n-i}\cap W_{i+1} = I^{i+1}J^{n-i}$. Since $W_i = I^iJ^{n-i} +W_{i+1}$, we have  an exact sequence
$$\zv \longrightarrow R/I^{i+1}J^{n-i} \longrightarrow R/I^iJ^{n-i} \oplus R/W_{i+1} \longrightarrow R/W_i \longrightarrow \zv.$$
By Depth Lemma, we have
$$\depth R/W_i \geqslant \min\{\depth R/I^{i+1}J^{n-i}-1,  \depth R/I^iJ^{n-i}, \depth R/W_{i+1}\}.$$
Together with Inequality ($\ref{X1}$) and the induction hypothesis, this fact yields
$$\depth R/W_i \geqslant \min\{\depth R/I^{i+1}J^{n-i}-1, s+1\}.$$
Therefore, the inequality ($\ref{LM}$) will follows if $\depth R/I^{i+1}J^{n-i} \geqslant s+2$. In order to prove this inequality, note that $(i+1)+(n-i) = n+1 \leqslant n_1 + n_2 -1$. Hence, either $i+1 < n_1$ or $n-i < n_2$. Note that $n-i\geqslant 1$.

If $i+1 < n_1$, by Part $1$ we get $\depth A/I^{i+1} \geqslant 2$ and $\depth B/J^{n-i}\geqslant s-1$. Together with \cite[Lemma $2.2$]{HT1} we obtain
$$\depth R/I^{i+1}J^{n-i} = \depth A/I^{i+1} + \depth B/J^{n-i}+1 \geqslant 2 + (s-1)+1=s+2,$$
as claimed.

If $n-i < n_2$, the proof is similar. Thus, the claim ($\ref{LM}$) is proved.

Notice that $W_0 = (I+J)^n = (I(G_1)+I(H))^n = I(G)^n$. By ($\ref{LM}$) we have $\depth R/I(G)^n \geqslant s+1$. This contradicts $(\ref{CONDITION})$. Therefore, we must have  $n\geqslant n_1+n_2-1$.

Finally, by the induction hypothesis we have
$$n_2 = \dstab(I(H)) = \sum_{i=2}^{p}\dstab(I(G_i)) -(p-1)+1.$$
Together with $n_1=\dstab(I(G_1))$, we have
$$n\geqslant n_1+n_2-1 =  \sum_{i=1}^p\dstab(G_i) -p+1,$$
as required.
\end{proof}

\begin{rem} From Theorem $\ref{MP}$ and Lemmas $\ref{LT21}$ and $\ref{LT1}$ we see that $\dstab(I(G))$ is independent from the characteristic of the base field $K$, so it depends purely on the structure of $G$.
\end{rem}

We next combine Theorem $\ref{MP}$ and Propositions $\ref{A3}$ and $\ref{L07}$ to get the second main result of the paper, which sets up an upper bound for $\dstab(I(G))$.

\begin{thm} \label{MT} \it Let $G$ be a graph. Let $G_1,\ldots,G_s$ be all connected bipartite components of $G$ and let $G_{s+1},\ldots,G_{s+t}$ be all connected nonbipartite components of $G$. Let $2k_i$ be the maximum length of cycles of $G_i$ ($k_i :=1$ if $G_i$ is a tree) for all $i=1,\ldots,s$; and let $2k_i-1$ be the maximum length of odd cycles of $G_i$ for every $i = s+1,\ldots, s+t$. Then
$$\dstab(I(G)) \leqslant \upsilon(G) - \varepsilon_0(G) -\sum_{i=1}^{s+t} k_i +1.$$
\end{thm}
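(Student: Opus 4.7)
The plan is to reduce Theorem \ref{MT} to the per-component bounds already established, using the additivity formula from Theorem \ref{MP}(3). Concretely, since $G$ has exactly $p = s+t$ connected components $G_1,\ldots,G_{s+t}$, Theorem \ref{MP}(3) gives
$$\dstab(I(G)) = \sum_{i=1}^{s+t}\dstab(I(G_i)) - (s+t) + 1.$$

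Next I would bound each summand $\dstab(I(G_i))$ using the appropriate connected-component result already proved: for a bipartite component $G_i$ ($1 \leqslant i \leqslant s$) with maximum cycle length $2k_i$ (or $k_i=1$ when $G_i$ is a tree), Proposition \ref{L07} gives $\dstab(I(G_i)) \leqslant \upsilon(G_i)-\varepsilon_0(G_i)-k_i+1$; for a nonbipartite component $G_i$ ($s+1 \leqslant i \leqslant s+t$) with maximum odd cycle length $2k_i-1$, Proposition \ref{A3} gives the same bound $\dstab(I(G_i)) \leqslant \upsilon(G_i)-\varepsilon_0(G_i)-k_i+1$. Substituting these into the additivity formula yields
$$\dstab(I(G)) \leqslant \sum_{i=1}^{s+t}\bigl(\upsilon(G_i)-\varepsilon_0(G_i)-k_i+1\bigr) - (s+t) + 1.$$

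Finally I would simplify using the obvious additivity $\upsilon(G) = \sum_{i=1}^{s+t}\upsilon(G_i)$ and $\varepsilon_0(G) = \sum_{i=1}^{s+t}\varepsilon_0(G_i)$ (leaves and leaf edges of $G$ are exactly the union of those of the components, since $G$ has no isolated vertices). The $+1$ terms contribute $(s+t)$, which cancels the $-(s+t)$, leaving
$$\dstab(I(G)) \leqslant \upsilon(G)-\varepsilon_0(G)-\sum_{i=1}^{s+t}k_i+1,$$
as required.

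There is essentially no obstacle: all the hard work has been done in Sections 2 and 3 (the per-component bounds via Propositions \ref{A3} and \ref{L07}) and in Theorem \ref{MP} (the reduction of $\dstab$ over connected components). The only thing to verify is the elementary bookkeeping that $\upsilon$ and $\varepsilon_0$ are additive over disjoint components and that $p$ equals $s+t$ by hypothesis, after which the claimed inequality drops out by direct arithmetic.
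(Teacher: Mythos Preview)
Your proposal is correct and follows essentially the same approach as the paper: apply Theorem~\ref{MP}(3) to reduce to connected components, bound each $\dstab(I(G_i))$ via Proposition~\ref{L07} (bipartite case) or Proposition~\ref{A3} (nonbipartite case), and use additivity of $\upsilon$ and $\varepsilon_0$ over components to simplify. The paper presents the arithmetic in a slightly different order but the ingredients and logic are identical.
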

\begin{proof} Since
$$\upsilon(G) - \varepsilon_0(G) -\sum_{i=1}^{s+t} k_i +1 = \sum_{i=1}^{s+t} \left(\upsilon(G_i)-\varepsilon_0(G_i)-k_i+1\right) -(s+t) +1,$$
by Propositions $\ref{A3}$ and $\ref{L07}$ we get
$$\upsilon(G) - \varepsilon_0(G) -\sum_{i=1}^{s+t} k_i +1 \geqslant \sum_{i=1}^{s+t} \dstab(I(G_i)) -(s+t) +1.$$
Together with Theorem $\ref{MP}$ we obtain
$$\dstab(I(G)) =  \sum_{i=1}^{s+t}\dstab(I(G_i)) - (s+t) +1 \leqslant \upsilon(G) - \varepsilon_0(G) -\sum_{i=1}^{s+t} k_i +1,$$
as required.
\end{proof}

\section{The index of depth stability of trees and unicyclic graphs}

The aim of this section is to prove that the upper bound of $\dstab(I(G))$ given in Theorem $\ref{MT}$ is always achieved if $G$ has no cycles of length $4$ and every component of $G$ is either a tree or a unicyclic graph. Recall that a connected graph $G$ is a tree if it contains no cycles; and $G$ is a unicyclic graph if it contains exactly one cycle. 

If $G$ is a unicyclic graph and $C$ is the unique cycle of $G$, then for every vertex $v$ of $G$ not lying in $C$, there is a unique simple path of minimal distance from $v$ to a vertex in $C$.

\begin{thm} \label{DMP} Let $G$ be a graph with $p$ connected components $G_1,\ldots,G_p$ such that each $G_i$ is either a tree or a unicyclic graph. For each $i$, if $G_i$ is bipartite, let $2k_i$ be the length of its unique cycle ($k_i :=1$ if $G_i$ is a tree); and if $G_i$ is nonbipartite, let $2k_i-1$ be the length of its unique cycle. If $G$ has no cycles of length $4$, then
$$\dstab(I(G)) =  \upsilon(G) - \varepsilon_0(G) -\sum_{i=1}^p k_i +1.$$
\end{thm}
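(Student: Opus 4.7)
By Theorem~\ref{MP}(3), $\dstab(I(G)) = \sum_{i=1}^p \dstab(I(G_i)) - p + 1$, so the claimed formula reduces to proving the per-component equality
$$\dstab(I(G_i)) = \upsilon(G_i) - \varepsilon_0(G_i) - k_i + 1 \qquad (1 \leqslant i \leqslant p).$$
The inequality $\leqslant$ is Theorem~\ref{MT} applied to each $G_i$ separately, so only the matching lower bound remains. Three cases must be distinguished: $G_i$ is unicyclic nonbipartite; $G_i$ is a tree; or $G_i$ is bipartite unicyclic (where the exclusion of $4$-cycles forces the cycle length $2k_i \geqslant 6$, i.e.\ $k_i \geqslant 3$). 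The first case is precisely Lemma~\ref{T6}, so the remaining work is in the two bipartite cases.

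For those, let $(X,Y)$ denote the bipartition of $G_i$. By Lemma~\ref{LT1}, $n := \dstab(I(G_i))$ is the smallest positive integer for which there exists $\alb \in \N^{\upsilon(G_i)}$ satisfying $\sum_{j\in X}\al_j = \sum_{j\in Y}\al_j = n-1$ and $\sum_{j\notin F}\al_j \geqslant n$ for every facet $F \in \mathcal F(\Delta(G_i)) \setminus \{X,Y\}$. It therefore suffices to show that any such $\alb$ forces $n-1 \geqslant \upsilon(G_i) - \varepsilon_0(G_i) - k_i$. The plan is to produce an adequate family of non-$\{X,Y\}$ facets of $\Delta(G_i)$ and combine the resulting inequalities linearly. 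For each non-leaf edge $e = \{u,v\}$ of $G_i$ (with $u \in X$, $v \in Y$) that does not lie on the cycle, the ``edge swap'' set $F_e := (X \cap G_i^v) \cup (Y \cap G_i^u)$, where $G_i^u$ and $G_i^v$ are the two components of $G_i - e$, is a maximal independent set distinct from $X$ and $Y$ that contains neither $u$ nor $v$. In the bipartite unicyclic case I would additionally use facets coming from the cycle $C$: each maximal independent set of $C$ distinct from $X \cap C$ and $Y \cap C$ (there are such sets precisely when $k_i \geqslant 3$, and none when $k_i = 2$) extends to a facet of $\Delta(G_i)$ different from $X$ and $Y$. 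Summing the inequalities $\sum_{j\notin F}\al_j \geqslant n$ attached to these facets with well-chosen multiplicities and collapsing the left-hand side via $\sum_{j\in X}\al_j = \sum_{j\in Y}\al_j = n-1$ produces the desired bound $n-1 \geqslant \upsilon(G_i) - \varepsilon_0(G_i) - k_i$.

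The main obstacle is the combinatorial bookkeeping in that summation: the multiplicities $c(w) := |\{F : w \notin F\}|$ have to be controlled precisely enough that $\sum_w c(w)\al_w$ is not much larger than $\sum_w \al_w = 2(n-1)$, which in turn requires splitting the sum across $X$ and $Y$ and pinning down $\max_{w \in X} c(w) + \max_{w \in Y} c(w)$ exactly. In the bipartite unicyclic case the blending of tree-edge and cycle contributions is subtle, and this is exactly where the no-$4$-cycle hypothesis enters (a $4$-cycle contributes no new cycle-facets beyond the global swap $X \leftrightarrow Y$, so the lower bound would otherwise jump up by $1$). A likely cleaner organization is induction on $\upsilon(G_i)$, running the leaf-removal construction from the proof of Proposition~\ref{L07} in reverse: pruning a suitable leaf reduces the data $(G_i, \alb)$ to a feasible pair for a smaller graph of the same type, with $\upsilon(G_i)$, $\varepsilon_0(G_i)$, and $n$ all shifting by controlled amounts, so the induction closes.
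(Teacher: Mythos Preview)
Your reduction via Theorem~\ref{MP}(3) and Theorem~\ref{MT}, and the nonbipartite case via Lemma~\ref{T6}, match the paper exactly. For the bipartite cases the paper adopts your second suggestion---induction on $\upsilon(G_i)$ by leaf removal (Lemmas~\ref{T4} and~\ref{T5})---rather than the global facet-summing you lead with; that direct summation appears only in the base case $G_i = C_{2k}$, where the facets $\{j\} \cup (Y \setminus N_G(j))$ for $j \in X$ give $\al_j + 1 \leqslant \sum_{i \in N_G(j)} \al_i$ and summing over $X$ yields $n \geqslant k+1$. The inductive step, which you only gesture at, carries the actual content: one picks a leaf $v$ whose neighbour $u$ satisfies $N_G(u) = \{w\} \cup L_G(u)$ with $w$ non-leaf (take $v$ at maximal distance from the cycle, or at the end of a diameter path for trees), and uses the two specific facets $(Y \setminus \{u\}) \cup L_G(u)$ and $(X \cup \{u\}) \setminus N_G(u)$ together with the \emph{minimality} of $n$ (via Lemma~\ref{LT3}) to force $\al_u \geqslant 1$, $\al_w \geqslant 1$, and $\al_i = 0$ for every $i \in L_G(u)$; deleting $L_G(u)$ then drops $\upsilon - \varepsilon_0$ by exactly one and, after decrementing $\al_u$ and $\al_w$, produces a witness that $\dstab(I(H)) \leqslant n-1$, closing the induction. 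Your edge-swap facets $F_e$ are a natural idea, but controlling the multiplicities $c(w)$ globally would require essentially the same structural analysis of leaves and their neighbours, so the induction buys a cleaner argument at no real cost.
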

By Theorem $\ref{MT}$, it suffices to show that $\dstab(G_i) = \upsilon(G_i)-\varepsilon_0(G_i)-k_i+1$ for each $i=1,\ldots,p$. If $G_i$ is nonbipartite, the equality follows from Lemma $\ref{T6}$. Thus,  it remains to prove this equality for the case $G_i$ is bipartite.

We divide the proof into two lemmas. The first lemma deals with unicyclic bipartite graphs and the second one deals with trees.

For a vertex $x$ of $G$, we denote $L_G(x)$ to be the set of leaves of $G$ that are adjacent to $x$. We start with the following observation.

\begin{lem}\label{LT3} Let $G$ be a graph with $r = \upsilon(G)$. Let $p$ be a leaf of $G$ and $q$ the unique neighbor of $p$ in $G$. Let $\alb = (\al_1,\ldots,\al_r) \in \N^r$ and we define $\btb=(\bt_1,\ldots,\bt_r)$ by
$$ \bt_i:= \left\{ \begin{array}{ll}
\al_i  +1 & \mbox{if $i=p$ or $i=q$},\\
\al_i       & \mbox{otherwise}.
\end{array}
\right.
$$
Then $\Delta_{\alb}(I(G)^n) = \Delta_{\btb}(I(G)^{n+1})$ for all $n \geqslant 1$.
\end{lem}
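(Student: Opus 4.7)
The plan is to work directly with the description in $(\ref{EQ01})$. Since $\alb,\btb\in\N^r$, both $G_{\alb}$ and $G_{\btb}$ are empty, so $\Delta_{\alb}(I(G)^n) = \{F\subseteq [r] \mid x^{\alb}\notin I(G)^n_F\}$ and similarly for $\btb$. Hence it suffices to prove that for every $F\subseteq [r]$,
$$x^{\alb}\in I(G)^n_F \iff x^{\btb}\in I(G)^{n+1}_F.$$
My first step will be to record the following combinatorial characterization, which holds because $I(G)^m$ is generated by $m$-fold products of edge monomials and the $F$-variables are invertible in $R_F$: one has $x^{\gmb}\in I(G)^m_F$ if and only if there exist edges $e_1,\ldots,e_m$ of $G$ (with repetition allowed) such that $\sum_{\ell=1}^m \chi_{e_\ell}(i) \leqslant \gmb_i$ for every $i\in [r]\setminus F$, where $\chi_e\in\{0,1\}^r$ is the incidence vector of the edge $e$.

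The implication $x^{\alb}\in I(G)^n_F \Rightarrow x^{\btb}\in I(G)^{n+1}_F$ will be immediate: given $n$ edges that cover $\alb$ outside $F$, I adjoin the edge $\{p,q\}$, whose incidence vector $\chi_{\{p,q\}}=\e_p+\e_q$ contributes on $[r]\setminus F$ exactly the increment $\btb-\alb$ restricted to $[r]\setminus F$, so the resulting $n+1$ edges cover $\btb$ outside $F$.

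For the converse, suppose edges $e_1,\ldots,e_{n+1}$ satisfy $\sum_\ell \chi_{e_\ell}(i)\leqslant \btb_i$ for every $i\notin F$. The key use of the leaf hypothesis is that $\{p,q\}$ is the only edge of $G$ incident to $p$. If some $e_\ell=\{p,q\}$, I delete it; the remaining $n$ edges then cover $\btb-(\e_p+\e_q)=\alb$ outside $F$. Otherwise no $e_\ell$ is incident to $p$, so $\sum_\ell \chi_{e_\ell}(p)=0\leqslant \alb_p$, and the only potential discrepancy with $\alb$ lies at position $q$, where $\btb_q=\alb_q+1$. If additionally no $e_\ell$ is incident to $q$, then $\sum_\ell \chi_{e_\ell}(q)=0\leqslant \alb_q$, and deleting any single edge yields an $n$-cover of $\alb$. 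If some $e_\ell=\{q,s\}$ with $s\neq p$, I delete that edge: this reduces the sum at $q$ by $1$ (yielding $\leqslant \alb_q$) and at $s$ by $1$ (still $\leqslant \alb_s$ when $s\notin F$, and immaterial when $s\in F$), while the remaining coordinates $i\neq p,q$ are untouched and bounded by $\btb_i=\alb_i$. In every sub-case I obtain an $n$-cover of $\alb$ outside $F$, whence $x^{\alb}\in I(G)^n_F$.

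The main obstacle, and essentially the only nontrivial content of the proof, is the case analysis in the converse direction — verifying that whenever the edge $\{p,q\}$ does not appear in the $(n+1)$-cover one can still safely drop some edge without violating the bound at the neighbor $q$. The leaf property of $p$ is the single structural input that makes the argument go through, since it forces every edge in the cover incident to $p$ to be $\{p,q\}$, thereby collapsing the analysis to the behaviour at the single vertex $q$.
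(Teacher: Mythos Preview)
Your proof is correct and takes a genuinely different route from the paper's. The paper argues via Lemma~\ref{L02}: for bipartite $G$ one has the facet description $\Delta_{\alb}(I(G)^n)=\seq{F\in\mathcal F(\Delta(G))\mid \sum_{i\notin F}\al_i\leqslant n-1}$, and since every maximal independent set of $G$ contains exactly one of $p,q$, the sum $\sum_{i\notin F}\bt_i$ exceeds $\sum_{i\notin F}\al_i$ by precisely $1$ for every facet $F$, whence the two complexes coincide. This is a two-line argument, but it tacitly relies on $G$ being bipartite (through Lemma~\ref{L02}), an assumption not present in the lemma statement though satisfied in every application in Section~5.

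You instead work directly with the description $(\ref{EQ01})$ and the edge-multiset membership criterion for $I(G)^m_F$, and your case analysis in the converse direction is sound: the leaf hypothesis guarantees that no edge other than $\{p,q\}$ touches $p$, and in each sub-case the edge you discard repairs the bound at $q$ without breaking it elsewhere. Your argument therefore proves the lemma for arbitrary graphs, not only bipartite ones, and avoids the detour through symbolic powers. The trade-off is a short case analysis versus the paper's one-line reduction; what you gain is full generality and self-containment.
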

\begin{proof} Let $F$ be a facet of $\Delta(G)$. By the maximality of $F$, it must contain either $p$ or $q$ but not both, so
$$\sum_{i\notin F}\bt_i = \sum_{i\notin F}\al_i +1.$$
Thus, by Lemma $\ref{L02}$ we get $\Delta_{\alb}(I(G)^{n}) = \Delta_{\btb}(I(G)^{n+1})$ for all $n \geqslant 1$.
\end{proof}

\begin{lem}\label{T4} Let $G$ be a unicyclic bipartite graph. Assume that the unique cycle of $G$ is $C_{2k}$ of length $2k$ with $k\geqslant 3$. Then, $\dstab(I(G)) = \upsilon(G)-\varepsilon_0(G)-k+1$.
\end{lem}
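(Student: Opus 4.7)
The upper bound is immediate from Proposition \ref{L07}. For the lower bound, Lemmas \ref{C1} and \ref{LT1} identify $\dstab(I(G))$ with the minimum $n$ admitting $\alb\in\N^r$ with $\Delta_{\alb}(I(G)^n)=\left<X,Y\right>$; for this $n$, $\sum_{i\in X}\al_i = \sum_{i\in Y}\al_i = n-1$, hence $\sum_i \al_i = 2(n-1)$. I will show by induction on $\upsilon(G)$ that $n\geq n_0 := \upsilon(G)-\varepsilon_0(G)-k+1$.

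The base case $G=C_{2k}$ uses the facets of $\Delta(C_{2k})$ of size $k-1$, which correspond to cyclic gap sequences with $k-3$ twos and two threes. A double-counting argument yields $N_{k-1}=k(k-2)$ such facets with each vertex lying in exactly $\binom{k-1}{2}$ of them. Summing $\sum_{i\notin F}\al_i\geq n$ over these facets gives $\tfrac{(k-2)(k+1)}{2}\sum_i\al_i \geq k(k-2)n$, whence $\sum_i\al_i\geq \tfrac{2k}{k+1}n$; combined with $\sum_i\al_i=2(n-1)$ this forces $n\geq k+1=n_0$.

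For the inductive step ($\upsilon(G)>2k$), by Remark \ref{rem_path} pick a leaf $p$ at maximum distance from $C_{2k}$, with unique neighbor $q$; set $G':=G\setminus\{p\}$ and fix notation so that $p\in X$ and $q\in Y$. Lemma \ref{LT3} and minimality of $n$ force $\al_p=0$ or $\al_q=0$. In \emph{Case A} ($q$ has another leaf neighbor $p_1\in X$), if $\al_p\geq 1$ the swap $(\al_p,\al_{p_1})\mapsto(0,\al_{p_1}+\al_p)$ preserves $\Delta_\alb(I(G)^n)=\left<X,Y\right>$: any facet $F\neq X,Y$ with $p_1\in F$ and $p\notin F$ would have $q\notin F$, and maximality of $F$ would then force $p\in F$, a contradiction. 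So we may assume $\al_p=0$; then $\alb'':=\alb|_{V(G')}$ satisfies $\Delta_{\alb''}(I(G')^n)=\left<X_{G'},Y_{G'}\right>$, via the bijection $F'\mapsto F'\cup\{p\}$ (if $q\notin F'$) or $F'\mapsto F'$ (if $q\in F'$) between non-trivial facets of $\Delta(G')$ and $\Delta(G)$. Since $n_0(G')=n_0(G)$, the inductive hypothesis gives $n\geq n_0(G)$. In \emph{Case B2} ($p$ is $q$'s only leaf neighbor, $\deg_G q\geq 3$), the facet $F_1:=(Y\setminus\{q\})\cup\{p\}$ of $\Delta(G)$ (maximality uses $L_G(q)=\{p\}$) forces $\al_q-\al_p\geq 1$, ruling out $\al_p\geq 1$; the same transfer then yields $n\geq n_0(G)$.

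\emph{Case B1} ($p$ is $q$'s only leaf and $\deg_G q=2$; let $q'\in X$ be $q$'s other neighbor) is the crux. The facet $F_1$ still forces $\al_q\geq 1+\al_p$. Crucially, $F_2:=(X\setminus\{p,q'\})\cup\{q\}\cup L_G(q')$ is a facet of $\Delta(G)$: its maximality follows because any $v\in Y\setminus\{q\}$ whose only $X$-neighbor lies in $\{p,q'\}$ must be a leaf of $q'$ and thus already lies in $L_G(q')\subseteq F_2$. The constraint at $F_2$ yields $\al_{q'}\geq 1+\al_q+\sum_{v\in L_G(q')}\al_v-\al_p\geq 2$ (using $\al_p=0$, $\al_q\geq 1$). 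In $G'$ the vertex $q$ becomes a leaf with neighbor $q'$, and both $\al''_q,\al''_{q'}\geq 1$; Lemma \ref{LT3} applied in $G'$ to $(q,q')$ then gives $\Delta_{\alb''-\e_q-\e_{q'}}(I(G')^{n-1})=\Delta_{\alb''}(I(G')^n)=\left<X_{G'},Y_{G'}\right>$, so $\dstab(I(G'))\leq n-1$. Since $n_0(G')=n_0(G)-1$ here, induction yields $n\geq n_0(G)$, completing the proof. The main obstacle is Case B1 --- constructing $F_2$, verifying its maximality, and chaining the transfer with Lemma \ref{LT3} in $G'$.
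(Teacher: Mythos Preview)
Your proof is correct and shares the paper's overall skeleton---induction on $\upsilon(G)$, the characterization of $\dstab$ via Lemma~\ref{LT1}, facet inequalities from Lemma~\ref{L02}, and the decrement trick of Lemma~\ref{LT3}---but the two implementations differ noticeably in both the base case and the inductive reduction. For $G=C_{2k}$ the paper uses only the $k$ faces $F_i=\{i\}\cup(Y\setminus N_G(i))$ for $i\in X$, obtaining $\al_i+1\leqslant\al_{u_i}+\al_{v_i}$ and summing over $X$ to get $n\geqslant k+1$ in two lines; your double-counting over all $k(k-2)$ size-$(k-1)$ facets reaches the same inequality but with more combinatorial overhead. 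In the inductive step the paper's dichotomy is ``max distance $=1$'' versus ``$\geqslant 2$'': in the first it zeroes out all leaf weights at once and restricts directly to $C_{2k}$; in the second it removes the entire leaf set $L_G(u)$ of the penultimate vertex in one stroke, proving $\al_u,\al_w\geqslant 1$ and $\al_i=0$ for every $i\in L_G(u)$, then applies Lemma~\ref{LT3} once in $H=G\setminus L_G(u)$. You instead peel off a single leaf $p$ per step and split into three cases (A, B2, B1) according to the local picture at $q=N_G(p)$, with only Case~B1 actually decreasing $n$. Your swap argument in Case~A and the explicit facet $F_2$ in Case~B1 (together with its maximality check) are the genuinely new ingredients; they replace the paper's blanket claim~(\ref{UN3}) that $\al_i=0$ for \emph{all} $i\in L_G(u)$. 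The paper's batch removal is more economical, while your one-leaf-at-a-time reduction makes the invariant $n_0(G')\in\{n_0(G),n_0(G)-1\}$ completely transparent at each step.
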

\begin{proof} Let $n:=\dstab(I(G))$. By Theorem $\ref{MT}$ we have $n \leqslant \upsilon(G)-\varepsilon_0(G)-k+1$. Thus, in order to prove the theorem it suffices to show $n \geqslant \upsilon(G)-\varepsilon_0(G)-k+1$.

Let $(X, Y)$ be a bipartition of $G$. Then, by Lemma $\ref{LT1}$ there is $\alb =(\al_1,\ldots,\al_r)\in \N^r$ such that
\begin{equation} \label{UN1} \Delta_{\alb}(I(G)^n) = \left<X,Y\right>  \text{ and } \sum_{j\in X} \al_j= \sum_{j\in Y} \al_j= n-1.
\end{equation}
Observe that for any face $F$ of $\Delta(G)$ with $F \cap X \ne \emptyset$ and $F \cap Y \ne \emptyset$, we have
\begin{equation}\label{UN2}   \sum_{i\notin F} \al_i \geqslant n.\end{equation}
Indeed,  let $L$ be a facet of $\Delta(G)$ which contains $F$, so that $L$ meets both $X$ and $Y$. Since $\Delta_{\alb}(I(G)^n) = \left<X,Y\right>$, $L \notin \Delta_{\alb}(I(G)^n)$.  By Lemma $\ref{L02}$ we get
$$\sum_{i\notin F}\al_i\geqslant \sum_{i\notin L} \al_i \geqslant n,$$
and the formula $(\ref{UN2})$ follows.

We now prove $n \geqslant \upsilon(G)-\varepsilon_0(G)-k+1$ by induction on $\upsilon(G)$.

If $\upsilon(G)=2k$, i.e.,  $G=C_{2k}$, then $\upsilon(G)-\varepsilon_0(G)-k+1 = k+1$. For each $i \in X$, let $N_G(i) = \{u_i,v_i\}$ and $F_i := \{i\} \cup (Y\setminus \{u_i,v_i\})$. Then, $F_i\in\Delta(G)$. Since $|X| = |Y| = k \geqslant 3$, $F_i\cap X\ne\emptyset$ and $F_i\cap Y\ne \emptyset$. Together with Formulas $(\ref{UN1})$ and $(\ref{UN2})$, this fact gives
$$n\leqslant \sum_{j\notin F_i}\al_j = \sum_{j\in X}\al_j + \al_{u_i}+\al_{v_i}-\al_i = n-1+\al_{u_i}+\al_{v_i}-\al_i,$$
whence $\al_i+1\leqslant \al_{u_i}+\al_{v_i}$. Hence,
$$\sum_{i\in X}\al_i+k = \sum_{i\in X}(\al_i+1) \leqslant \sum_{i\in X}(\al_{u_i}+\al_{v_i})  = 2\sum_{j\in Y}\al_j.$$
Together with Formula ($\ref{UN1}$), this gives $(n-1)+k \leqslant 2(n-1)$. Thus, $n \geqslant k+1$, and thus the lemma holds for this case.

Assume that $\upsilon(G) > 2k$. We distinguish two cases:

\smallskip

{\it Case $1$:} $G\setminus V(C_{2k})$ is totally disconnected. For any vertex $u$ lying in $C_{2k}$ with $L_G(u)\ne \emptyset$, we claim that 
\begin{equation}\label{UN3Wisker} \al_u \geqslant 1 \text{, and } \al_i = 0 \ \text{ for every } i \in L_G(u).\end{equation}

Indeed, without loss of generality we may assume that $u\in Y$, so that $L_G(u)\subseteq X$. let $F := (Y \setminus \{u\}) \cup L_G(u)$. Then, $F \in \Delta(G)$. Since the length of $C_{2k}$ is at least $6$, we have $F\cap Y\ne \emptyset$. Notice that $\emptyset \ne L_G(u) \subseteq F \cap X$.  Therefore, $F\cap X \ne \emptyset$ and $F\cap Y\ne \emptyset$. By Formula $(\ref{UN2})$ we have
$$\sum_{i\in X}\al_i +\al_u - \sum_{i\in L_G(u)} \al_i = \sum_{i\notin F}\al_i \geqslant n.$$
By ($\ref{UN1}$), this gives
$$n-1 +\al_u - \sum_{i\in L_G(u)} \al_i  \geqslant n,$$
so
$$\al_u \geqslant  \sum_{i\in L_G(u)}\al_i +1\geqslant 1.$$
Hence, it remains to prove that $\al_i = 0$ for all $i\in L_G(u)$. Assume that $\al_i \geqslant 1$ for some $i\in L_G(u)$. Define $\btb = (\bt_1,\ldots,\bt_r)$ by
$$ \bt_j := \left\{ \begin{array}{ll}
\al_j  - 1 & \mbox{if $j=u$ or $j=i$},\\
\al_j       & \mbox{otherwise}.
\end{array}
\right.
$$
Then, $\btb\in \N^r$. Since $u\in Y$ and $\alpha_u\geqslant 1$, by ($\ref{UN1}$) we have
$$n-1 =\sum_{j\in Y}\al_j \geqslant \al_u \geqslant 1.$$
By Lemma $\ref{LT3}$ we have $\Delta_{\btb}(I(G)^{n-1}) = \Delta_{\alb}(I(G)^n)$. Consequently, $\Delta_{\btb}(I(G)^{n-1}) =\left<X,Y\right>$, which implies $\depth R/I(G)^{n-1} = 1$ by Lemma $\ref{LT1}$, and so $\dstab(I(G)) \leqslant n-1$ by Theorem $\ref{MP}$. This contradicts to $n=\dstab(I(G))$. Thus, $\al_i = 0$, as claimed.

We may assume that $V(H) = \{1,\ldots,2k\}$. Let $\btb := (\alpha_1,\ldots,\alpha_{2k}) \in \N^{2k}$, $X_0 := X \cap V(C_{2k})$ and $Y_0 := Y\cap V(C_{2k})$. Then, $(X_0,Y_0)$ is a bipartition of $C_{2k}$. Clearly,
$$
X = X_0 \cup \bigcup_{i\in Y_0} L_G(i) \text{ and } Y = Y_0 \bigcup _{i\in X_0} L_G(i).
$$
Together with Claim $(\ref{UN3Wisker})$ we have
$$\sum_{i\notin  X_0} \beta_i = \sum_{i\notin X}\alpha_i = n-1.$$
Similarly, $\sum_{i\notin  Y_0} \beta_i = n-1$. Therefore, $X_0, Y_0 \in \Delta_{\btb}(I(C_{2k})^n)$.

For any facet $F$ of $\Delta(C_{2k})$ which is different from $X_0$ and $Y_0$, let $$F' := F \cup \bigcup_{i\in V(C)\setminus F} L_G(i).$$
Then, $F'$ is a facet of $\Delta(G)$ which is different from $X$ and $Y$. Together Claim $(\ref{UN3Wisker})$ with Lemma $\ref{L02}$, we have
$$\sum_{i\notin F} \beta_i = \sum_{i\notin F'} \alpha_i  \geqslant n$$
so that $F\notin \Delta_{\btb}(I(C_{2k})^n)$. Thus, $\Delta_{\btb}(I(C_{2k})^n) =\left<X_0, Y_0\right>$.

This gives $\depth S/I(C_{2k})^n = 1$ where $S=K[x_1,\ldots,x_{2k}]$. From the case $\upsilon(G) = 2k$ above,  we imply that
$$n \geqslant k+1 = \upsilon(G) -\varepsilon_0(G)-k+1,$$
and the lemma holds in this case.

\medskip

{\it Case $2$:} $G\setminus V(C_{2k})$ is not totally disconnected. Let $v$ be a leaf of $G$ such that $d_G(v,C_{2k})$ is maximal. By Remark $\ref{rem_path}$, we deduce that $N_G(v)$ has only one non-leaf, say $u$, and $N_G(u)$ also has only one non-leaf, say $w$. Note that $L_G(u) \ne\emptyset$ since $v\in L_G(u)$. We may assume that $u\in Y$, so that $v\in X$. We first claim that
\begin{equation}\label{UN3} \al_u \geqslant 1 \text{, and } \al_i = 0 \ \text{ for every } i \in L_G(u).\end{equation}
Indeed, let $F := (Y \setminus \{u\}) \cup L_G(u)$. Then, $F \in \Delta(G)$. Since $|N_G(w)| \geqslant 2$ and $N_G(w)\subseteq Y$, we have $\emptyset \ne N_G(w)\setminus\{u\}\subseteq Y\setminus \{u\} \subseteq F\cap Y$. Notice that $\emptyset \ne L_G(u) \subseteq F \cap X$.  Therefore, $F\cap X \ne \emptyset$ and $F\cap Y\ne \emptyset$. The proof of claim now carries out the same as in Claim $(\ref{UN3Wisker})$.

We next claim that
\begin{equation}\label{UN4} \al_w \geqslant 1.\end{equation}
Indeed, assume on the contrary that $\al_w = 0$. Note that $w\in X$ and $N_G(u) = L_G(u) \cup\{w\}$. Let $F := (X \cup\{u\}) \setminus N_G(u)$. Then, $F \in \Delta(G)$ and $u\in F \cap Y$. Since $N_G(u) \ne X$, $F \cap X \ne \emptyset$.  By Formulas $(\ref{UN1})-(\ref{UN3})$ and the assumption $\al_w = 0$, these facts give
$$n\leqslant \sum_{i\notin F}\al_i  = \sum_{i\in Y}\al_i  -\al_u +\al_w +\sum_{i\in L_G(u)}\al_i = n-1  -\al_u,$$
and so $\al_u <  0$,  a contradiction. Thus, $\al_w \geqslant 1$, as claimed.

Let $H := G \setminus L_G(u)$. Clearly, $H$ is a connected bipartite graph with bipartition $(X \setminus L_G(u), Y)$. Moreover, $H$ has only cycle $C_{2k}$ as well. We may assume that $V(H) = \{1,\ldots,s\}$. Then $s \geqslant 2k$ and
$L_G(u) = \{s+1,\ldots, r\}$. Let $\ttb = (\theta_1,\ldots,\theta_s) := (\al_1,\ldots, \al_s) \in \N^s$. We now prove
that
\begin{equation}\label{UN5}\Delta_{\ttb}(I(H)^n) = \left<X \setminus L_G(u), Y \right>. \end{equation}
Indeed, by $(\ref{UN3})$ we get  $\sum_{\al_i\in L_G(u)}\al_i = 0$. Together with Formula $(\ref{UN1})$, this fact gives
$$\sum_{i\in V(H), i\notin Y}\theta_i =\sum_{i\in V(H), i\notin Y}\al_i=\sum_{i\in X\setminus L_G(u)} \al_i +\sum_{i\in L_G(u)}\al_i =\sum_{i\in X}\al_i = n-1.$$
Hence, by Lemma $\ref{L02}$, $Y \in\Delta_{\ttb}(I(H)^n)$ . Similarly, $X \setminus L_G(u) \in \Delta_{\ttb}(I(H)^n)$. Now let $F'$ be any facet of $\Delta(H)$ which is different from $X \setminus L_G(u)$ and $Y$ .

If $u \in F'$ then $F'$ is also a facet of $\Delta(G)$. By noticing that $F'$ is different from $X$ and $Y$ and $\sum_{i\in L_G(u)}\al_i=0$, so by $(\ref{UN2})$ we have
$$\sum_{i\in V(H), i\notin F'}\theta_i =\sum_{i\in V(H), i\notin F'}\al_i +\sum_{i\in L_G(u)}\al_i = \sum_{i\notin F'}\al_i\geqslant n,$$
and so $F' \notin\Delta_{\ttb}(I(H)^n)$.

If $u \notin F'$, then $w \in F'$ since $u$ is a leaf of $H$, hence $F' \cup L_G(u)$ is a facet of $\Delta(G)$. Similarly, we have $F' \notin\Delta_{\ttb}(I(H)^n)$, and the formula $(\ref{UN5})$ follows.

\smallskip
Define $\gmb = (\gamma_1,\ldots,\gamma_s)\in\Z^s$ by
$$ \gamma_j := \left\{ \begin{array}{ll}
\theta_j  - 1 & \mbox{if $j=u$ or $j=w$},\\
\theta_j       & \mbox{otherwise}.
\end{array}
\right.
$$
From Inequalities $(\ref{UN3})$ and $(\ref{UN4})$, we have $\gamma_u=\theta_u-1=\al_u-1\geqslant 0$ and $\gamma_w=\theta_w-1=\al_w-1\geqslant 0$, so $\gmb\in\N^s$. Note that
$$n-1= \sum_{i\in X}\al_i \geqslant \al_u \geqslant 1.$$
Therefore, by Lemma $\ref{LT3}$ we have $\Delta_{\gmb}(I(H)^{n-1}) = \Delta_{\ttb}(I(H)^n)$. Together with $(\ref{UN5})$ we get $$\Delta_{\gmb}(I(H)^{n-1}) = \left<X \setminus L_G(1), Y \right>.$$
Hence, by Lemma $\ref{LT1}$ we have $\depth S/I(H)^{n-1} = 1$, where $S=K[x_1,\ldots,x_s]$. By Theorem $\ref{MP}$ we have $\dstab(I(H)) \leqslant n-1$. On the other hand, since $\upsilon(H) =\upsilon(G)-|L_G(u)| <\upsilon(G)$, by the induction hypothesis we have $\dstab(H)\leqslant \upsilon(H)-\varepsilon_0(H)-k+1$. As $\{w,u\}$ is not a leaf edge of $G$ and recall that $H = G \setminus L_G(u)$, we conclude that $\varepsilon_0(G) = \varepsilon_0(H) + |L_G(u)| -1$. Thus,
$$\upsilon(G) -\varepsilon_0(G) - k+1= \upsilon(H) + |L_G(u)|- (\varepsilon_0(H) + |L_G(u)| - 1)-k+1 = \upsilon(H)-\varepsilon_0(H) -k.$$
Hence, $n-1 \geqslant \dstab(I(H)) \geqslant \upsilon(G)-\varepsilon_0(G) -k$, and hence $n \geqslant \upsilon(G)-\varepsilon_0(G) -k+1$.  Thus, the proof now is complete.
\end{proof}

Finally, we compute $\dstab(I(G))$ for trees $G$. If a tree $G$ has a vertex $x$ being adjacent to every other vertex, then $G$ is called a star with a center $x$. Note that $G$ is a star if and only if $\diam(G) \leqslant 2$ where $\diam(G)$ stands for the diameter of $G$. If $\diam(G) = d$, then there is a path $x_1x_2\ldots x_dx_{d+1}$ of length $d$ in $G$. Such a path will be referred to as a path {\it realizing the diameter} of $G$.

\begin{lem}\label{T5} $\dstab(I(G)) = \upsilon(G) -\varepsilon_0(G)$ for all trees $G$.
\end{lem}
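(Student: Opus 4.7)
The upper bound $\dstab(I(G))\leqslant \upsilon(G)-\varepsilon_0(G)$ is Theorem~$\ref{MT}$ applied with $s=1$, $t=0$, $k_1=1$, so the substance is the reverse inequality. My plan is to induct on $\upsilon(G)$. If $\diam(G)\leqslant 2$, then $G$ is a single edge or a star, and $\upsilon(G)-\varepsilon_0(G)=1$ is a trivial lower bound for $\dstab(I(G))$. So suppose $d:=\diam(G)\geqslant 3$, fix a path $x_1 x_2\cdots x_{d+1}$ realizing the diameter, and set $u:=x_d$, $w:=x_{d-1}$; the maximality of $d$ forces every neighbor of $u$ other than $w$ to be a leaf. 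Let $n:=\dstab(I(G))$. By Lemma~$\ref{LT1}$ I may choose $\alb\in\N^r$ with $\Delta_{\alb}(I(G)^n)=\langle X,Y\rangle$ and $\sum_{i\notin X}\al_i=\sum_{i\notin Y}\al_i=n-1$; without loss of generality $u\in Y$, so $w\in X$ and $L_G(u)\subseteq X$.

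Following Case~$2$ of Lemma~$\ref{T4}$, the heart of the argument is to establish $\al_u\geqslant 1$, $\al_i=0$ for every $i\in L_G(u)$, and $\al_w\geqslant 1$. For the first, the set $F_1:=(Y\setminus\{u\})\cup L_G(u)$ is a mixed facet of $\Delta(G)$, since $|Y|\geqslant 2$ and every $v\in X\setminus L_G(u)$ has a neighbor in $Y\setminus\{u\}$ (either $v$ is a non-leaf, or $v$ is a leaf whose unique neighbor is not $u$); this gives $\al_u\geqslant 1+\sum_{j\in L_G(u)}\al_j\geqslant 1$. For the second, if $\al_i\geqslant 1$ for some $i\in L_G(u)$, Lemma~$\ref{LT3}$ applied with $p=i$, $q=u$ produces $\alb'$ with $\Delta_{\alb'}(I(G)^{n-1})=\langle X,Y\rangle$ by decreasing $\al_u$ and $\al_i$ each by one, contradicting the minimality of $n$. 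For the third, I would split on $d$: if $d=3$, then $G$ is a double broom and the all-leaves set $F_2:=L_G(u)\cup L_G(w)$ is a mixed facet, so $\al_u+\al_w\geqslant n$ together with $\al_u\leqslant n-1$ gives $\al_w\geqslant 1$; if $d\geqslant 4$, then $x_{d-3}\in X\setminus N_G(u)$ is non-empty and $F_3:=(X\setminus N_G(u))\cup\{u\}\cup L_G(w)$ is a mixed facet (inserting $L_G(w)$ is exactly what makes it maximal when $w$ carries leaves), and $\sum_{i\notin F_3}\al_i\geqslant n$ combined with $\sum_{i\notin Y}\al_i=n-1$ and $\sum_{j\in L_G(u)}\al_j=0$ yields $\al_w\geqslant 1+\al_u+\sum_{j\in L_G(w)}\al_j\geqslant 1$.

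The induction then closes exactly as in Lemma~$\ref{T4}$. Let $H:=G\setminus L_G(u)$. Since $u$ becomes a leaf of $H$ with unique neighbor $w$, and the edge $\{u,w\}$ --- not a leaf edge of $G$ because $w$ has the further neighbor $x_{d-2}$ --- becomes a leaf edge of $H$, one has $\upsilon(H)-\varepsilon_0(H)=\upsilon(G)-\varepsilon_0(G)-1$. Set $\ttb:=(\al_i)_{i\in V(H)}$. A direct check with Lemma~$\ref{L02}$ shows $\Delta_{\ttb}(I(H)^n)=\langle X\setminus L_G(u),Y\rangle$: for any facet $F'$ of $\Delta(H)$ distinct from $X\setminus L_G(u)$ and $Y$, either $F'$ itself (if $u\in F'$) or $F'\cup L_G(u)$ (if $u\notin F'$) is a mixed facet of $\Delta(G)$, and the vanishing $\sum_{j\in L_G(u)}\al_j=0$ converts its defining inequality into $\sum_{i\in V(H)\setminus F'}\ttb_i\geqslant n$. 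Since $\al_u,\al_w\geqslant 1$, Lemma~$\ref{LT3}$ applied to $H$ (with leaf $u$ and its neighbor $w$) produces $\gmb$ from $\ttb$ by decreasing the $u$- and $w$-entries by one, with $\Delta_{\gmb}(I(H)^{n-1})=\langle X\setminus L_G(u),Y\rangle$. This complex is disconnected ($w\in X\setminus L_G(u)$ so both facets are non-empty), so $\depth R/I(H)^{n-1}=1$ and $\dstab(I(H))\leqslant n-1$. By the induction hypothesis, $\dstab(I(H))=\upsilon(H)-\varepsilon_0(H)=\upsilon(G)-\varepsilon_0(G)-1$, giving $n\geqslant \upsilon(G)-\varepsilon_0(G)$.

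The main obstacle is the step $\al_w\geqslant 1$. In the unicyclic case of Lemma~$\ref{T4}$ the cycle guarantees that $(X\setminus N_G(u))\cup\{u\}$ is simultaneously non-empty and maximal; for trees this set may be empty (precisely when $d=3$, i.e.\ $G$ is a double broom) and, even when non-empty, may fail to be maximal if $w$ has leaves. Both defects must be repaired, producing the two-case split and the insertion of $L_G(w)$ described above.
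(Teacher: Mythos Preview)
Your proof is correct and follows the same inductive strategy as the paper's (sketched) argument, but you have in fact identified and repaired a genuine gap in that sketch. The paper's Lemma~\ref{T5} simply says ``by the same way as in Case~$2$ in the proof of Lemma~\ref{T4}'', but the verification of $\al_w\geqslant 1$ there uses the face $F=(X\cup\{u\})\setminus N_G(u)$ and the fact that $N_G(u)\ne X$; for a tree with $\diam(G)=3$ one has exactly $X=\{w\}\cup L_G(u)=N_G(u)$, so $F\cap X=\emptyset$ and formula~(\ref{UN2}) does not apply. Your double-broom substitute $F_2=L_G(u)\cup L_G(w)$ is the right fix: its complement is $\{u,w\}$, giving $\al_u+\al_w\geqslant n$ and hence $\al_w\geqslant 1$.

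One small simplification: for $d\geqslant 4$ you do not need to adjoin $L_G(w)$ or worry about maximality. Inequality~(\ref{UN2}) holds for any \emph{face} meeting both $X$ and $Y$, so the paper's original $F=(X\setminus N_G(u))\cup\{u\}$ already works once $x_{d-3}\in X\setminus N_G(u)$ guarantees $F\cap X\ne\emptyset$; your enlarged $F_3$ gives the same conclusion but via a slightly weaker inequality. Also, in your computation for $F_3$ you cite $\sum_{i\notin Y}\al_i=n-1$ where you are actually using $\sum_{i\in Y}\al_i=n-1$; the arithmetic is unaffected.
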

\begin{proof} Let $n := \dstab(I(G))$. By Theorem $\ref{MT}$ we have $n \leqslant \upsilon(G) -\varepsilon_0(G)$. So it remains to show $n \geqslant \upsilon(G) -\varepsilon_0(G)$.

If $G$ is a star, then $\varepsilon_0(G) = \varepsilon(G) = \upsilon(G)-1$, and then $\upsilon(G) -\varepsilon_0(G) = 1 \leqslant n$. Thus, the lemma holds for this case.

We will prove  by induction on $\upsilon(G)=r$. If $\upsilon(G) = 2$, then $G$ is one edge, and then it is a star. This case is already proved.

If $\upsilon(G) \geqslant 3$. We may assume that $G$ is not a star so that $\diam G \geqslant 3$. Since $\depth R/I(G)^n = 1$, there is $\alb =(\alpha_1,\ldots,\alpha_r)\in\N^r$ such that $\Delta_{\alb}(I(G)^n) = \left<X,Y\right>$ where $(X,Y)$ is a bipartition of $G$.

Let $vuw \ldots z$ be a path realizing the diameter of $G$. Then $v$ is a leaf, $u$ and $w$ both are not leaves. By \cite[Lemma $3.3$]{MO} we have $N_G(u) = \{w\} \cup L_G(u)$. And now we prove $n\geqslant \upsilon(G)-\varepsilon_0(G)$ by the same way as in Case $2$ in the proof of Lemma $\ref{T4}$. Thus we only sketch the proof here:

First, we show that $\alpha_u\geqslant 1, \alpha_w\geqslant 1$ and $\alpha_i = 0$ for every $i\in L_G(u)$. Then, let $T:=G\setminus L_G(u)$. Note that $T$ is also a tree and  $\upsilon(G)-\varepsilon_0(G) = \upsilon(T)-\varepsilon_0(T)+1$. We may assume that $u\in Y$, $w=s-1$, $u = s$ and $L_G(u) = \{s+1,\ldots,r\}$. Let $\ttb := (\alpha_1,\ldots,\alpha_{s-2}, \alpha_{s-1}-1,\alpha_s-1)\in\N^s$. 
Then, we show that
$$\Delta_{\ttb}(I(T)^{n-1}) =\left<X\setminus L_G(u), Y\right>.$$

This gives $\depth S/I(T)^{n-1} = 1$ where $S=K[x_1,\ldots,x_s]$. By the  induction hypothesis we have
$n-1\geqslant \upsilon(T)-\varepsilon_0(T)$. From that we obtain $n\geqslant  \upsilon(G)-\varepsilon_0(G)$.
\end{proof}

\begin{rem} \label{RMP} Let $G$ be a unicyclic bipartite graph. If the unique circle of $G$  is $C_4$ of length $4$, by the same argument as in the proof of Lemma $\ref{T4}$ we have the following situations:
\begin{enumerate}
\item If $G=C_4$, then $\dstab(I(G)) = 1$.
\smallskip
\item If $G\ne C_4$ and $C_4$ has at least two adjacent vertices of degree $2$ in $G$, then  $\dstab(I(G)) = \upsilon(G)-\varepsilon_0(G)-2$.
\smallskip
\item In the remain cases, $\dstab(I(G))= \upsilon(G)-\varepsilon_0(G)-1$.
\end{enumerate}
\end{rem}

Thus if every connected component of $G$ is either a tree or a unicyclic graph, then we can compute $\dstab(I(G))$ by using Theorem $\ref{MP}$, Lemmas $\ref{T6}$, $\ref{T4}$, $\ref{T5}$ and Remark $\ref{RMP}$.

\subsection*{Acknowledgment} I would like to thank Professors L. T. Hoa and  N. V. Trung for helpful comments. I would like to thank the referee for his/her careful reading and many useful suggestions. A part of this work was carried out while I visited Genoa University under the support from EMMA in the framework of the EU Erasmus Mundus Action $2$. I would like to thank them and Professor A. Conca for support and hospitality. This work is also partially supported by NAFOSTED (Vietnam), Project $101.01-2011.48$.


\begin{thebibliography}{99}

\bibitem {BM} J. A. Bondy and U. S. R. Murty, Graph theory, Springer 2008.

\bibitem {B} M. Brodmann, {\it The Asymptotic Nature of the Analytic Spread}, Math. Proc. Cambridge Philos Soc. {\bf 86} (1979), 35-39.

\bibitem {BH} W. Brun and J. Herzog, Cohen-Macaulay rings, Cambridge Studies in Advanced Mathematics 39, Cambridge University Press, Cambridge 1993.

\bibitem {CMS} J. Chen, S. Morey and A. Sung, {\it The Stable Set of Associated Primes of the Ideal of a Graph}, Rocky Mountain J. Math. {\bf 32} (2002), 71-89.

\bibitem {CN} R.C. Cowsik and M.V. Nori, {\it Fibers of blowing up}, J. Indian Math. Soc. {\bf 40} (1976), 217-222.

\bibitem {EH} D. Eisenbud and C. Huneke, {\it Cohen-Macaulay Rees Algebras and their Specializations}, J. Algebra {\bf 81} (1983) 202-224.

\bibitem {GH} D. H. Giang and L. T. Hoa, {\it On local cohomology of a tetrahedral curve}, Acta Math. Vietnam., {\bf 35} (2010), no. 2, 229-241.

\bibitem  {HH1} J. Herzog and T. Hibi, {\it The Depth of Powers of an Ideal}, J. Algebra {\bf 291} (2005), 534-550.

\bibitem {HRV} J. Herzog, A. Rauf and M. Vladoiu, {\it The stable set of associated prime ideals of a polymatroidal ideal},  J. Algebraic Combin. {\bf 37} (2013), no. 2, 289-312.

\bibitem  {HQ} J. Herzog and A. A. Qureshi,  {\it Persistence and stability properties of powers of ideals}, J. Pure and
Applied Algebra {\bf 219}(2015), 530-542.

\bibitem  {HT1} L. T. Hoa and N. D. Tam, {\it On some invariants of a mixed product of ideals}, Arch. Math. (Basel) {\bf 94} (2010), no. 4, 327-337.

\bibitem  {HT} L. T. Hoa and T. N. Trung, {\it Partial Castelnuovo-Mumford regularities of sums and intersections of powers of monomial ideals}, Math. Proc. Cambridge Philos Soc. {\bf 149} (2010), 1-18.

\bibitem  {HO} M. Hochster, {\it Rings of Invariants of Tori, Cohen-Macaulay Rings Generated by Monomials, and Polytopes}, Ann. of Math. {\bf 96} (1972), 318-337.

\bibitem  {HU} C. Huneke, {\it On the associated graded ring of an ideal}, Illinois J. Math. {\bf 26} (1982), 121-137.

\bibitem  {MMV} J. Martinez-Bernal, S. Morey, R. H. Villarreal, {\it Associated primes of powers of edge ideals}, Collect. Math. {\bf 63} (2012), no. 3, 361-374.

\bibitem {MS} E. Miller and B. Sturmfels, Combinatorial commutative algebra. Springer, 2005.

\bibitem  {MT1} N. C. Minh and N. V. Trung, {\it Cohen-Macaulayness of powers of two-dimensional squarefree monomial ideals}, J. Algebra {\bf 322} (2009), 4219-4227.

\bibitem  {MT2} N. C. Minh and N. V. Trung, {\it Cohen-Macaulayness of monomial ideals and symbolic powers of Stanley-Reisner ideals}, Adv. Math. {\bf 226} (2011), no. 2, 1285-1306.

\bibitem {TT} N. Terai and N. V. Trung, {\it Cohen-Macaulayness of large powers of Stanley-Reisner ideals}, Adv. Mathematics {\bf 229} (2012), 711-730.

\bibitem  {MO} S. Morey, {\it Depths of powers of the edge ideal of a tree}, Comm. Algebra {\bf 38} (2010), no. 11, 4042-4055.

\bibitem  {MU} J. R. Munkres, Elements of Algebraic Topology, Addison-Wesley, 1984.

\bibitem  {SVV} A. Simis, W.V. Vasconcelos and R.H. Villarreal, {\it On the Ideal Theory of Graphs}, J. Algebra {\bf 167} (1994), 389-416.

\bibitem  {ST} R. P. Stanley, Combinatorics and Commutative Algebra, second edition, Birkhauser, Boston, MA, 1996.

\bibitem {T} Y. Takayama, {\it Combinatorial characterizations of generalized Cohen-Macaulay monomial ideals}, Bull. Math. Soc. Sci. Math. Roumanie (N.S.) {\bf 48} (2005), 327-344.

\bibitem  {W} W. Vasconcelos, Integral Closure, Rees Algebras, Multiplicities, Algorithms, Springer Monographs
in Mathematics, Berlin, Springer-Verlag, 2005.

\end{thebibliography}
\end{document}